\tikzset{>=stealth'}
\def\beqq{\arraycolsep1pt\begin{eqnarray*}}
\def\eeqq{\end{eqnarray*}}
\newcommand{\R}{{\mathbb R}}
\newcommand{\dist}{{\rm{dist}}}
\newtheorem{theorem}{Theorem}[section]
\newtheorem*{theorem*}{Theorem}
\newtheorem{lemma}[theorem]{Lemma}
\newtheorem{proposition}[theorem]{Proposition}
\newtheorem{corollary}[theorem]{Corollary}
\newtheorem{definition}[theorem]{Definition}
\newtheorem*{remark*}{Remark}
\newcommand{\cB}{\mathcal{B}}
\newcommand{\cD}{\mathcal{D}}
\newcommand{\cH}{\mathcal{H}}
\newcommand{\cO}{\mathcal{O}}
\newcommand{\cR}{\mathcal{R}}
\newcommand{\cS}{\mathcal{S}}
\newcommand{\cT}{\mathcal{T}}
\newcommand{\cU}{\mathcal{U}}
\newcommand{\bR}{\mathbb{R}}
\newcommand{\ib}{\mathrm{i}_{\mathrm{B}}}
\newcommand{\db}{\mathrm{d}_{\mathrm{B}}}
\newcommand{\morse}{\mathrm{m}^-}
\title{Bifurcation of closed orbits from equilibria of Newtonian systems with Coriolis forces}
\author{Anna Go{\l}\c{e}biewska, Ernesto P\'erez-Chavela, S{\l}awomir Rybicki\\ and Antonio J. Ure\~na}
\date{\today}
\begin{document}

\maketitle

\begin{abstract}
We consider autonomous Newtonian systems with Coriolis forces in two and three dimensions and study the existence of branches of periodic orbits emanating from equilibria. We investigate both degenerate and nondegenerate situations. While Lyapunov’s center theorem applies locally in the nondegenerate, nonresonant context, equivariant degree theory provides a global answer which is significant also in some degenerate cases. 

We apply our abstract results to a problem from Celestial Mechanics.
More precisely, in the three-dimensional version of the Restricted Triangular Four-Body Problem with possibly different primaries our results show the existence of at least seven branches of periodic orbits emanating from the stationary points. 

\end{abstract}

\section{Introduction}

Many mechanical problems can be modeled as the motion of a particle subjected to the conservative force  created by a uniformly rotating source. This is for instance the case of the classical circular restricted three-body problem or, more generally, the restricted problem consisting of a massless particle moving under the attraction of a given family of primaries revolving solidly around the origin. In rotating coordinates we obtain a system of autonomous second-order equations which in the {\em planar} case reads
\begin{equation*}
	\begin{cases}
\ddot x+2\dot y+\frac{\partial V}{\partial x}(x,y)=0,\\
\ddot y-2\dot x+\frac{\partial V}{\partial y}(x,y)=0,
\end{cases}
\tag*{\bf [2d]}
\end{equation*}
and in the {\em spatial} context becomes 
\begin{equation*}
	\begin{cases}
	\ddot x+2\dot y+\frac{\partial V}{\partial x}(x,y,z)=0,\\
	\ddot y-2\dot x+\frac{\partial V}{\partial y}(x,y,z)=0,\\
	\ddot z+\frac{\partial V}{\partial z}(x,y,z)=0,
	\end{cases}
	\tag*{\bf [3d]}
\end{equation*}
(the rotation is assumed to take place in the $xy$ plane). One can condense both situations in a unified way  as follows:
\begin{equation}\label{soe}
	\ddot q-2\alpha_N\dot q+V'(q)=0,\qquad q\in\Omega,
\end{equation}
where $q=(x,y)$  or $q=(x,y,z)$ is a variable in $\mathbb R^N$ with $N=2$ or $N=3$, respectively. We assume that the {\em effective potential} $V=V(q)$ is defined and has class $C^2$ on some open domain $\Omega\subset\mathbb R^N$, its gradient being denoted by $V':\Omega\to\mathbb R^N$. Moreover, the skew-symmetric matrices $\alpha_2\in\mathbb R^{2\times 2}$, $\alpha_3\in\mathbb R^{3\times 3}$ are given by
$$\alpha_2=\left(\begin{array}{rr}0&-1\\ 1&0\end{array}\right),\qquad\alpha_3=\left(
\begin{array}{c|c}
	\raisebox{-8pt}{\normalsize\mbox{{$\alpha_2$}}} & 0\\[-1.1ex]  & 0\\\hline\\[-2ex]  0\ \  0&0
\end{array}
\right)\,.$$

In the spatial case {\bf [3d]} we shall further assume that $V=V(x,y,z)$ satisfies
	\begin{equation*}
		\frac{\partial V}{\partial z}(x,y,0)=0,\qquad (x,y)\in\widetilde\Omega\,,\tag*{\bf [H$_1$]}
	\end{equation*}
where $\widetilde\Omega:=\{(x,y)\in\mathbb R^2:(x,y,0)\in\Omega\}$. In this way, the restriction
$$\widetilde V(x,y):=V(x,y,0),\qquad (x,y)\in\widetilde\Omega,$$ lies under the framework established in {\bf [2d]}, and the corresponding closed orbits are actually closed orbits of the full spatial problem.

\smallskip

The equilibria of \eqref{soe} coincide with the critical points of $V$. Throughout this paper it will always be assumed that:

\begin{equation*}
\text{All critical points of $V$ are isolated.}\tag*{\bf [V]}
\end{equation*}

\smallskip

\noindent Thus, we allow the possibility that $V$ has infinitely many critical points, but in this case they must accumulate either at infinity or on the boundary  of $\Omega$.
In addition, in the spatial case {\bf [3d]} we shall always assume that 
\begin{equation*}
q_0=(\widetilde q_0,0)\in\widetilde\Omega\times\{0\},\ \beta_3(q_0):=\frac{\partial^2 V}{\partial z^2}(q_0)>0,\ \ \text{ for every }q_0\in(V')^{-1}(0)\,.\tag*{\bf [H$_2$]}\end{equation*}
In combination with {\bf [H$_1$]}, the second part of {\bf [H$_2$]} roughly means that the force $-V'$ attracts our particle towards $\widetilde\Omega\times\{0\}$ when it gets close to an equilibrium.
 The goal of this paper is to study the {\em  existence of global branches of closed orbits of (\ref{soe}) emanating from  equilibria}, both in the planar and the spatial cases.

\smallskip

A well-known {\em necessary} condition for the existence of closed orbits near a given equilibrium $q_0$ is that it must be nonhyperbolic. On the other hand,  a {\em sufficient} condition is provided by  Lyapunov's center theorem: under some nondegeneracy and nonresonance conditions, the existence of emanating families of closed orbits is guaranteed in the elliptic and elliptic-hyperbolic cases. Perhaps unsurprisingly, these situations can be described in terms of the spectrum of the Hessian matrix $V''(q_0)$. In addition, the equivariant degree theory for Hamiltonian systems provides powerful tools which will allow us to {\em (a):} replace the nondegeneracy condition with a milder condition on the Brouwer index of $q_0$ as a zero of $V'$;  {\em (b):} obtain the existence of {\em global} (rather than local) branches.

\smallskip

These results are formulated in a precise form in Sections \ref{sec:def}-\ref{sec:results}. In the planar setting {\bf [2d]} we characterize the existence of bifurcating branches of closed orbits in terms of the eigenvalues $\beta_1,\beta_2$ of $V''(q_0)$. The degenerate situations where $(\beta_1,\beta_2)$ lies on the boundary of the existence region while avoiding the boundary of the nonexistence one can still be dealt with if the  Brouwer index of $V'$ at the isolated zero $q_0$ does not vanish. On the other hand, in the spatial case {\bf [3d]} these bifurcating branches of closed orbits do exist  {\em in all nondegenerate cases}, and even in the degenerate ones when the corresponding Brouwer index does not vanish. Moreover, in some situations the bifurcating branch can be shown to be nonplanar. We illustrate our results with an example and formulate a number of open questions. 

\smallskip

In Section \ref{sec:celmec} we apply our abstract results to a restricted four-body problem. Assuming that three (possibly different) positive masses revolve around their center of masses in a Lagrangian equilateral triangle, we study the motion of a massless test particle subjected to their gravitational attraction.  For the three-dimensional problem it turns out that there are at least seven branches of closed orbits emanating from seven corresponding libration points.

\smallskip

In Section \ref{sec:branches} we state, without proof, a bifurcation result taken from \cite{DanRyb} which will be the main tool behind of our arguments. Based on degree theory for equivariant gradient maps, it applies to general (autonomous) Hamiltonian systems having a given equilibrium. Under the assumption that the Morse index of certain $8\times 8$ or $12\times12$ matrices $S_T$ changes as the parameter $T$ varies on $(0,+\infty)$, Theorem \ref{th:bifurcation} states the existence of a branch of closed orbits emanating from the equilibrium. We also recall De Gua's corollary of Descartes' rule of signs \cite{Gua}, which allows the exact computation of the Morse index of a symmetric matrix by looking at the number of sign changes in the sequence of coefficients of its characteristic polynomial. 

\smallskip

Sections \ref{sec:hamiltonian}-\ref{sec:prf}  are devoted to prove the general results announced in Section \ref{sec:results}. More precisely, in Section \ref{sec:hamiltonian} we observe that \eqref{soe}  can be rewritten in a Hamiltonian form and compare some properties that a given equilibrium may have in both contexts. Section \ref{sec:spectrum} is devoted to study the spectrum of the matrices $S_T$, and in Section \ref{sec:Tk} we compute their Morse index. This will lead us to complete the proofs in Section \ref{sec:prf}. The paper closes with an Appendix where we discuss a couple of elementary facts from linear algebra needed in our computations of Sections \ref{sec:spectrum}-\ref{sec:Tk}.

\smallskip

Sumarizing, Sections \ref{sec:def}-\ref{sec:results} are devoted to present the general bifurcation results, in Section \ref{sec:celmec} we apply them in a problem of Celestial Mechanics, the expository Section \ref{sec:branches} collects a couple of known theorems to be used later, and Sections \ref{sec:hamiltonian}-\ref{sec:prf} are concerned with the proofs. In addition, this latter `proof block' resorts from time to time to material from the Appendix. The purpose of this scheme is to motivate the theory before going into the mathematical details, while keeping at the same time the pace of the exposition.

\smallskip

Predicting the existence of closed orbits near an equilibrium is a classical problem which goes back to Poincar\'e \cite{Poi}. Coinciding with the space race, the seventies saw a renewed interest in this question \cite{AleYor,MeySch,Mos,Rabinowitz2,Sch,Wei} that continues to this day \cite{Bartsch,Ben,DanRyb,Dui,GarIze,GebMar,MacRyb, MeyPalYan,RadzRyb,Szu} (the lists are just an small sample and far from complete). The fact that  the set of closed orbits is invariant under translations in the time variable introduces a degeneracy in the problem that has been solved in a variety of ways,  including: geometrical index theories, simplectic reduction techniques, equivariant Ljusternik-Schnirelmann category theory, equivariant Morse theory and equivariant topological degree theory.  Our choice of the latter is motivated by the fact that it both applies to {\em degenerate} situations and produces {\em global} branches of closed orbits.
  
\section{Equilibria and branches of closed orbits}\label{sec:def}
Throughout this paper we study periodic solutions of (\ref{soe}) whose periods are not fixed in advance and, in general, may differ from one solution to another. The set of  positive periods of a given periodic solution  $q=q(t)$ is an infinite  semigroup of real numbers. We shall follow the convention of counting each periodic solution infinitely many times -once for each positive period. After identifying the periodic solution $q:\mathbb R/T\mathbb Z\to\mathbb R^{N}$ with the pair $(T,\bar q)$ where $\bar q(\theta)=q\big(\frac{T}{2\pi}\theta\big)$, we may see them  as elements of the cartesian product $(0,+\infty)\times C(\mathbb S^1,\Omega)$. Here, and in what follows, {  $\mathbb S^1=\{\theta:\theta\in\mathbb R/2\pi\mathbb Z\}$. It motivates the following terminology: the pair $(T,\bar q)\in (0,+\infty)\times C(\mathbb S^1,\Omega)$ will be called a {\em closed orbit} if $q(t):=\bar q\big(\frac{2\pi}{T}t\big)$ is a periodic solution.  
	
	\smallskip
	
	For instance, if $q_0\in\Omega$ is a critical point of $V$ (or equivalently, an equilibrium of (\ref{soe})), then $(T,q_0)$ is a closed orbit for each $T>0$ -by abuse of notation we still denote by $q_0$ to the corresponding constant map defined on $\mathbb S^1$. The closed orbits obtained in this way will be  considered {\em trivial} and therefore the set of trivial closed orbits can be canonically identified with $(0,+\infty)\times(V')^{-1}(0)$.

\smallskip

On the other hand, a closed orbit $(T,\bar q)$ will be called {\em nontrivial} if the function $\bar q:\mathbb S^1\to\mathbb R^N$ is nonconstant. We denote by $\Lambda$ the closure in $\mathbb R\times C(\mathbb S^1,\Omega)$ of the set of nontrivial closed orbits.  By a result of Yorke \cite{Yor}, the set $\Lambda$ is contained into $(0,+\infty)\times C(\mathbb S^1,\Omega)$. Therefore, all pairs $(T,\bar q)\in\Lambda$ are closed orbits but some trivial closed orbits may not belong to $\Lambda$. It motivates the notion of {\em branch}, a word that has been used quite loosely above. The following definition is inspired by Leray-Schauder's Continuation Theorem \cite{LerSch} (see also \cite{Rab}). 

\begin{definition}[Branch of closed orbits]\label{branch} A connected component $\cB$ of $\Lambda$ will be called a branch of closed orbits if it satisfies at least one of the following assumptions: 
	\begin{enumerate}
		\item[(a)] $\cB$ is unbounded in the Banach space $\mathbb R\times C(\mathbb S^1,\mathbb R^{N})$,
		\item[(b)] the closure of the set $\{\bar q(\theta):\theta\in\mathbb S^1,\ (T,\bar q)\in\cB\}$ has nonempty intersection with $\partial\Omega$, or
		\item[(c)]  $\cB$ is compact with the inherited $\bR\times C(\mathbb S^1,\bR)$-topology and contains at least two different trivial closed orbits.
	\end{enumerate}

The branch of closed orbits $\mathcal B$ is said to emanate from the trivial closed orbit $(T,q_0)\in (0,+\infty)\times(V')^{-1}(0)$ provided that $(T,q_0)\in\mathcal B$. We shall simply say that $\mathcal B$ emanates from the equilibrium $q_0$ provided that $(T,q_0)\in\mathcal B$ for some $T>0$.

\end{definition}

For short, one can name the three possibilities in Definition \ref{branch} by saying that a branch must, either:  {\em (a) be unbounded}, or {\em (b) go up to the boundary}, or  {\em (c) be compact, containing at least two trivial closed orbits}. A couple of remarks are in order here:

\begin{enumerate}
\item[(I)] Possibility {\em (a)} may happen  if either $\{\bar q(\theta):\theta\in\mathbb S^1,\ (T,\bar q)\in\cB\}$ is unbounded in $\bR^N$ or  the set of periods $\{T:(T,\bar q)\in\cB\}$  is unbounded from above. The second situation has been called the {\em blue sky catastrophe} in the literature. It can be excluded in some special cases, see \cite[\S 7.6]{FraVan}.
\item[(I\hspace{-0.03cm}I)]For the purposes of this paper, case {\em (c)} could have been stated in a stronger form: not only $\cB$ must be compact and contain at least two trivial closed orbits, but in addition the sum of the bifurcation numbers of the trivial closed orbits  in $\cB$ must be zero. The {\em bifurcation number} is an integer  $\gamma_N(T,q_0)\in\mathbb Z$ associated to any trivial closed orbit $(T,q_0)\in(0,+\infty)\times (V')^{-1}(0)$. More generally it can be defined for trivial closed orbits of arbitrary Hamiltonian systems, see \eqref{bb3} in Section \ref{sec:branches}. For system (\ref{soe}) these integers can be computed explicitly in terms of $T$ and the spectrum of $V''(q_0)$ (and also the Brouwer index $\ib(q_0,V')$ in the degenerate cases). See \eqref{bn2}-\eqref{bn3} in Section \ref{sec:prf}. See also e.g. \cite{Brown} for an introduction to the Brouwer index.
\end{enumerate}
\section{The general results}\label{sec:results}
 The main purpose of this section is to announce the main abstract bifurcation results of the paper. They are organized in Theorems \ref{thm:main2d}-\ref{thm:main3d}. We shall subsequently apply them in a problem of Celestial Mechanics (in Section \ref{sec:celmec}) and devote Sections \ref{sec:hamiltonian}-\ref{sec:prf} to their proofs.
 
 \smallskip
 
Fix an equilibrium $q_0\in\Omega$. {\em We shall always denote by $\beta_1,\beta_2$ the eigenvalues of $V''(q_0)$ in the planar case {\bf [2d]}, or the eigenvalues of $\widetilde V''(\widetilde q_0)$ in the spatial situation  {\bf [3d]}}. Inside the plane of couples $(\beta_1,\beta_2)\in\mathbb R^2$ we consider the closed set $$C:=(\mathbb R\times\{0\})\cup(\{0\}\times\mathbb R)\,,$$ made up of both coordinate axis, and the open set
 $$\cR_0:=\big\{(\beta_1,\beta_2)\in(-\infty,0)^2:\beta_1+\beta_2<\max(-4,-2-(\beta_1-\beta_2)^2/8)\big\}\,.$$
 
 \smallskip

 These sets are pictured in Figure 1 below. We have also labeled the four domains $\cR_i$, $1\leq i\leq 4$, in which the complementary open set $\mathbb R^2\backslash(\bar{\cR}_0\cup C)$ is divided. Notice that $\cR_1$, $\cR_2$ and $\cR_4$ coincide, respectively, with the first, second, and fourth open quadrants, while $\cR_3$ is the set of points $(\beta_1,\beta_2)$ in the open third quadrant $(-\infty,0)^2$ such that $\beta_1+\beta_2>\max(-4,-2-(\beta_1-\beta_2)^2/8)$.
 \begin{figure}
 	\begin{center}
 		\begin{tikzpicture}[xscale=0.5, yscale=0.5]
 			\draw[red,very thick](-6,0)--(-4,0);
 			\draw[red,very thick](0,-6)--(0,-4);
 			\draw[olive,style= {line width=0.6pt}](-6,0)--(-4,0);
 			\draw[olive,style= {line width=0.6pt}](0,-6)--(0,-4);
 			\draw[olive,dashed,very thick](-4,0)--(3.5,0);
 			\draw[olive, dashed,very thick](0,-4)--(0,3.5);
 			\draw[help lines,dashed,very thin,->](0,-5)--(0,4);
 			\draw[help lines,dashed,very thin,->](-5,0)--(4,0);
 			\draw[red, very thin, densely dashed](-4,-5.5)--(-0.75,-2.25);
 			\draw[red, very thin, densely dashed](-2,-6)--(-0.5,-4.5);
 			\draw[red, very thin, densely dashed](-5.5,-4)--(-2.25,-0.75);
 			\draw[red, very thin, densely dashed](-6,-2)--(-4.5,-0.5);
 			\draw[red, very thick, domain=(-4:4)] plot({\x/2-\x*\x/16-1,-\x/2-\x*\x/16-1});
 			\node (A) at(-3.3,-3.3){\color{red}\small $\cR_0$};
 			\node[right] at (0,3.8){\color{gray}\scriptsize$\beta_2$};
 			\node[below] at (4,0){\color{gray}\scriptsize$\beta_1$};
 			\node[below] at (-4,0){\color{gray}\scriptsize$-4$};
 			\node[right] at (0,-4){\color{gray}\scriptsize$-4$};
 			\node[right] at (2,0.5){\color{olive}\footnotesize $C$};
 			\node at(-0.6,-0.6){\color{cyan}\small $\cR_3$};
 			\node (B) at(-3.1,2.5){\color{blue}\small $\cR_2$};
 			\node (C) at(2.5,-3.1){\color{brown}\small $\cR_4$};
 			\node at(1.9,2.2){\color{violet}\small $\cR_1$};
 		\end{tikzpicture}
 	\end{center}
 	\caption{The plane $(\beta_1,\beta_2)$ of eigenvalues of $ V''(q_0)$ (in the planar case) or $ \widetilde V''(\widetilde q_0)$ (in the spatial case). }
 \end{figure}
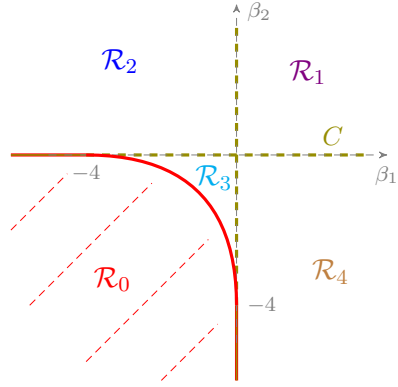
 
 \smallskip
 We also consider the continuous, symmetric functions $$T_-:(\bR^2\backslash\bar\cR_0)\cup((\partial\cR_0)\backslash C)\to(0,+\infty),\qquad T_+:\cR_1\cup\cR_3\cup((\partial\cR_0)\backslash C)\to(0,+\infty),$$ defined by
 \begin{equation}\label{lpm}
 	T_\pm(\beta_1,\beta_2):=2\pi\sqrt{2}\left(\beta_1+\beta_2+4\mp 2\sqrt{2}\sqrt{\beta_1+\beta_2+2+(\beta_1-\beta_2)^2/8}\right)^{-1/2}\,.
 \end{equation}
 
It can be checked that $T_-(\beta_1,\beta_2)<T_+(\beta_1,\beta_2)$ on $\cR_1\cup\cR_3$ and $T_-(\beta_1,\beta_2)=T_+(\beta_1,\beta_2)$ on $(\partial\cR_0)\backslash C$, see Lemma \ref{lem1234321}.  Our interest in these functions comes from the fact that they can be used to describe the nonzero, purely-imaginary part of the spectrum  of the linearized system (\ref{soe}) at the equilibrium $q_0$. In fact, in the planar case  {\bf [2d]} these characteristic exponents are $-i\,2\pi/T_\pm(\beta_1,\beta_2)$, $i\,2\pi/T_\pm(\beta_1,\beta_2)$, wherever defined. 
 In addition, in the spatial situation  {\bf [3d]} it suffices to add $\pm\sqrt{\beta_3}i$ to the previous lists (see Lemma \ref{lem6}). We arrive to one of the main general results of this paper:
 
 \smallskip

\begin{theorem}\label{thm:main2d}
Assume {\bf [2d]}. The following hold:
\begin{enumerate}
	\item[(i)]
	If $(\beta_1,\beta_2)\in \cR_0$, then there are not closed orbits in a sufficiently small neighborhood of the equilibrium $q_0$.
\item[(ii)]  If $(\beta_1,\beta_2)\not\in\bar\cR_0\cup C$,  then  there is a branch of closed orbits emanating from $\left(T_-(\beta_1,\beta_2),q_0\right)$. The same conclusion holds if $(\beta_1,\beta_2)\in C\backslash\bar\cR_0$ and $\ib(q_0, V') \not = 0$.
\item[(iii)]  If $(\beta_1,\beta_2)\in\cR_1\cup\cR_3$,  then  there is also a branch of closed orbits emanating from $\left(T_+(\beta_1,\beta_2),q_0\right)$. 
\end{enumerate}	
\end{theorem}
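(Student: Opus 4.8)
The plan is to reduce everything to the Hamiltonian bifurcation criterion of Section~\ref{sec:branches}. First I would rewrite \eqref{soe} as a first order autonomous Hamiltonian system with the equilibrium $q_0$ as a stationary point (this is done in Section~\ref{sec:hamiltonian}), and recall that its linearization at $q_0$ has characteristic polynomial
\[
P(\lambda)=\lambda^4+(\beta_1+\beta_2+4)\,\lambda^2+\beta_1\beta_2 .
\]
Statement (i) will then be handled by a direct hyperbolicity argument, while (ii) and (iii) will follow from Theorem~\ref{th:bifurcation} once one knows how the Morse index of the associated $8\times 8$ matrices $S_T$ jumps as $T$ runs over $(0,+\infty)$.

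For (i): putting $\mu=\lambda^2$, the purely imaginary roots of $P$ correspond to negative real roots of $\mu^2+(\beta_1+\beta_2+4)\mu+\beta_1\beta_2=0$, whose discriminant is $\Delta=(\beta_1-\beta_2)^2+8(\beta_1+\beta_2+2)$. I would then split according to the max in the definition of $\cR_0$. If $(\beta_1-\beta_2)^2\ge 16$, the defining inequality of $\cR_0$ reads $\beta_1+\beta_2<-4$, so the $\mu$-equation has positive sum and positive product of its roots; hence, when $\Delta\ge 0$, two positive real roots (so $\lambda$ real), and when $\Delta<0$, a non-real conjugate pair (so $\mathrm{Re}\,\lambda\neq 0$). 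If $(\beta_1-\beta_2)^2<16$, the defining inequality of $\cR_0$ is exactly $\Delta<0$, again giving $\mathrm{Re}\,\lambda\neq0$. Since $\beta_1\beta_2>0$ on $\cR_0$, the value $\lambda=0$ is excluded as well, so $q_0$ is a hyperbolic equilibrium of \eqref{soe}, and the well known fact that a hyperbolic equilibrium has no closed orbits in a sufficiently small neighbourhood (via the Hartman--Grobman theorem) yields (i).

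For (ii) and (iii) the plan is to read off, from \cite{DanRyb} and Section~\ref{sec:hamiltonian}, the family of symmetric $8\times 8$ matrices $S_T$ attached to $q_0$, and to invoke Section~\ref{sec:spectrum} (which computes their spectrum) together with Section~\ref{sec:Tk} (which computes $\morse(S_T)$ via De Gua's corollary of Descartes' rule \cite{Gua}). The outcome I would use is: $S_T$ is nonsingular except when $2\pi/T$ equals the nonzero imaginary part of a characteristic exponent of the linearization at $q_0$, i.e.\ except for $T\in\{T_-(\beta_1,\beta_2),\,T_+(\beta_1,\beta_2)\}$ (those being defined exactly through $\pm i\,2\pi/T_\pm$, cf.\ the discussion preceding the theorem and \eqref{lpm}); $\morse(S_T)$ is constant on each complementary interval; and the jump of $\morse(S_T)$ as $T$ increases through $T_-(\beta_1,\beta_2)$ is nonzero whenever $(\beta_1,\beta_2)\notin\bar\cR_0\cup C$, while through $T_+(\beta_1,\beta_2)$ it is nonzero whenever $(\beta_1,\beta_2)\in\cR_1\cup\cR_3$ (recall $T_-<T_+$ strictly there, and $T_+$ is simply undefined otherwise, by Lemma~\ref{lem1234321}). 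Feeding these into Theorem~\ref{th:bifurcation} produces a branch emanating from $(T_-(\beta_1,\beta_2),q_0)$, which is (ii), and, on $\cR_1\cup\cR_3$, a second branch from $(T_+(\beta_1,\beta_2),q_0)$, which is (iii). The degenerate case of (ii), $(\beta_1,\beta_2)\in C\setminus\bar\cR_0$, is the same argument with one extra ingredient: now one of the $\beta_i$ vanishes, so $q_0$ is a degenerate but, by {\bf [V]}, isolated zero of $V'$; the value $\lambda=0$ is a characteristic exponent, the constant orbits contribute to the equivariant degree through $\ib(q_0,V')$, and the formula of Section~\ref{sec:Tk} exhibits the jump of $\morse(S_T)$ at $T_-(\beta_1,\beta_2)$ as a nonzero multiple of (more precisely, a quantity determined by) $\ib(q_0,V')$, which is nonzero by hypothesis; this is also the source of the explicit bifurcation numbers $\gamma_2(T,q_0)$ recorded in \eqref{bn2}--\eqref{bn3}.

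I expect the real work, and the main obstacle, to be precisely the computations underlying Sections~\ref{sec:spectrum}--\ref{sec:Tk}: tracking the sign pattern of the coefficient sequence of the characteristic polynomial of the $8\times 8$ matrix $S_T$ as the three parameters $T,\beta_1,\beta_2$ range over all of $\cR_1,\dots,\cR_4$ and the various axis/boundary subcases, and checking that exactly one sign change is gained or lost as $T$ crosses $T_\pm(\beta_1,\beta_2)$, so that $\morse(S_T)$ genuinely jumps there — with extra care in the degenerate cases where a coefficient factors through $\ib(q_0,V')$. By comparison, the hyperbolicity computation for (i) and the passage to Hamiltonian form are routine.
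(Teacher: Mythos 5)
Your proposal follows essentially the same route as the paper: part (i) via the quartic characteristic polynomial, hyperbolicity on $\cR_0$, and Hartman--Grobman, and parts (ii)--(iii) via the Hamiltonian reformulation, the Morse-index jumps of $S_T$ at $T_\pm(\beta_1,\beta_2)$ computed with De Gua's rule, and the Dancer--Rybicki bifurcation criterion, with the Brouwer index $\ib(q_0,V')$ entering the bifurcation number exactly as in \eqref{bb3} and \eqref{bn2}. The only loose phrasing is that the Morse-index jump itself does not depend on $\ib(q_0,V')$; rather the bifurcation number is $\ib(q_0,V')$ times half that jump, which is what must be nonzero — but since $\ib(q_0,V')=\mathrm{sign}(\beta_1\beta_2)=\pm 1$ off $C$, your argument goes through as stated.
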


The formulation of {\em (i)} is conveniently simple but needs some interpretation. It could be more precisely expressed as follows: 	If $(\beta_1,\beta_2)\in \cR_0$ then there exists an open set $\Omega_1\subset\Omega$ with $q_0\in\Omega_1$ such that if $(T,\bar q)\in(0,+\infty)\times C(\mathbb S^1,\Omega_1)$ is a closed orbit of \eqref{soe} then $\bar q(\theta)=q_0$ for every $\theta\in\mathbb S^1$.  
 
 \smallskip
 
 Concerning the second part of {\em (ii)} we point out that if  $(\beta_1, \beta_2)\notin C$ then  
 \begin{equation}\label{bd}
 	\ib(q_0,V')={\rm sign}(\beta_1\beta_2)=\begin{cases}
 		1&\text{if }(\beta_1,\beta_2)\in(\bar\cR_0\backslash C)\cup\cR_1\cup\cR_3,\\
 		-1&\text{if }(\beta_1,\beta_2)\in\cR_2\cup\cR_4.
 	\end{cases}
 \end{equation}
 Thus, assertion {\em (ii)} of Theorem \ref{thm:main2d} could be reformulated as follows: if $(\beta_1,\beta_2)\not\in\bar\cR_0$ and $i_B(q_0,V')\not=0$,  then there is a branch of closed orbits of \eqref{soe} emanating from $(T_-(\beta_1,\beta_2),q_0)$. 
 Two sufficient conditions implying, each of them, the inequality $\ib(q_0,V')\not=0$, are: {\em (a):} $V$ has a {\em local extremum} at $q_0$ (\cite[Lemma 6.5]{Kra}, \cite{Rab3}), or {\em (b):} $V$ is {\em even} with respect to $q_0$
(Borsuk-Ulam theorem). This observation leads to the following

\begin{corollary}\label{corol1XT}{If  $(\beta_1,\beta_2)\not\in\bar\cR_0$ and either $V$ attains a local extremum at $q_0$, or $V$ is even with respect to $q_0$, then there is a branch of closed orbits of \eqref{soe} emanating from $\left(T_-(\beta_1,\beta_2),q_0\right)$.}
\end{corollary}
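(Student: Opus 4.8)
The plan is to obtain the corollary as an immediate consequence of assertion \emph{(ii)} of Theorem \ref{thm:main2d}, in the reformulated version recorded just above: \emph{if $(\beta_1,\beta_2)\notin\bar\cR_0$ and $\ib(q_0,V')\neq 0$, then a branch of closed orbits emanates from $(T_-(\beta_1,\beta_2),q_0)$}. Since $q_0$ is an isolated zero of $V'$ by {\bf [V]}, the index $\ib(q_0,V')$ is well defined; and since $(\beta_1,\beta_2)\notin\bar\cR_0$ this point already lies in $\bR^2\setminus\bar\cR_0$, hence in the domain of $T_-$, so the emanation point $(T_-(\beta_1,\beta_2),q_0)$ makes sense. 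Thus the only thing left to verify is that, under either of the two alternative hypotheses on $V$, one has $\ib(q_0,V')\neq 0$; once this is done, applying the reformulated \emph{(ii)} finishes the argument.

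First I would dispose of the case in which $V$ has a local extremum at $q_0$. If $q_0$ is a (necessarily isolated) local minimum of $V$, then by \cite[Lemma 6.5]{Kra} (see also \cite{Rab3}) the Brouwer index of the gradient satisfies $\ib(q_0,V')=1\neq 0$. If instead $q_0$ is a local maximum of $V$, then it is an isolated local minimum of $-V$, so $\ib(q_0,-V')=1$; combining this with the elementary identity $\ib(q_0,-V')=(-1)^{N}\,\ib(q_0,V')$ (valid because $-\mathrm{id}_{\bR^N}$ has determinant $(-1)^N$) gives $\ib(q_0,V')=(-1)^N\neq 0$, which here, with $N=2$, is just $1$.

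Next I would treat the case in which $V$ is even with respect to $q_0$, that is $V(q_0+v)=V(q_0-v)$ for all $v$ in a neighbourhood of $0\in\bR^2$. Differentiating yields $V'(q_0+v)=-V'(q_0-v)$, so the map $v\mapsto V'(q_0+v)$ is odd on a small ball $B_r\subset\bR^2$ centred at the origin, and it does not vanish on $\partial B_r$ for $r>0$ small because $q_0$ is isolated. By the Borsuk--Ulam theorem, the Brouwer degree of an odd map on a ball symmetric about the origin is an odd integer; hence $\ib(q_0,V')$ is odd, in particular nonzero.

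With $\ib(q_0,V')\neq 0$ established in both cases and $(\beta_1,\beta_2)\notin\bar\cR_0$ by hypothesis, the reformulated assertion \emph{(ii)} of Theorem \ref{thm:main2d} applies and produces the desired branch of closed orbits emanating from $(T_-(\beta_1,\beta_2),q_0)$ — using the first sentence of \emph{(ii)} when $(\beta_1,\beta_2)\notin\bar\cR_0\cup C$, and its second sentence (together with the index condition) when $(\beta_1,\beta_2)\in C\setminus\bar\cR_0$. I do not expect any genuine obstacle: the corollary is just Theorem \ref{thm:main2d} combined with two classical facts about the index of gradient vector fields. The only places asking for a touch of care are the local-maximum subcase, where one should pass to $-V$ rather than invoke the minimum statement verbatim, and the point — crucial for the usefulness of the corollary — that both index computations hold without any nondegeneracy assumption, so they cover the degenerate configurations lying on $C\setminus\bar\cR_0$.
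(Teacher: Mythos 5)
Your proposal is correct and follows essentially the same route as the paper: the corollary is obtained by combining the reformulated assertion \emph{(ii)} of Theorem \ref{thm:main2d} with the two classical facts that a local extremum forces $\ib(q_0,V')\neq 0$ (via \cite[Lemma 6.5]{Kra}, \cite{Rab3}) and that evenness forces the index to be odd (Borsuk--Ulam). The extra details you supply (passing to $-V$ for a maximum, the oddness of $v\mapsto V'(q_0+v)$) are exactly the intended justifications left implicit in the paper.
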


In Section \ref{sec:spectrum} it will be observed that if $(\beta_1,\beta_2)\in\cR_0$ then the equilibrium $q_0$ is hyperbolic, if $(\beta_1,\beta_2)\in\cR_1\cup\cR_3$ the equilibrium is elliptic, and if $(\beta_1,\beta_2)\in\cR_2\cup\cR_4$ then the equilibrium is of elliptic-hyperbolic type (see Lemmas \ref{lem1234321}-\ref{lem6}). Therefore, the branches of closed orbits described in {\em (ii)} and {\em (iii)} are the global continuations of the {\em short} and {\em long period}  Lyapunov families, respectively. In the elliptic case $(\beta_1,\beta_2)\in\cR_1\cup\cR_3$ it is well-known that if $T_+(\beta_1,\beta_2)/T_-(\beta_1,\beta_2)$ is an integer then both branches may actually correspond to the same periodic solutions of {\bf [2d]} -just traveled through several times on each period. Even when the nonresonance condition $T_+(\beta_1,\beta_2)/T_-(\beta_1,\beta_2)\not\in\mathbb Z$ holds we cannot exclude the possibility that the long and short period branches are connected, and so end up being the same branch. There is some additional local information at hand concerning these branches. For instance, when $(\beta_1,\beta_2)\notin\bar\cR_0\cup C$ the `short period' closed orbits near $(T_-(\beta_1,\beta_2),q_0)$ have minimal period close to $T_-(\beta_1,\beta_2)$. Similarly, if $(\beta_1,\beta_2)\in\cR_1\cup\cR_3$ and $T_+(\beta_1,\beta_2)/T_-(\beta_1,\beta_2)\not\in\mathbb Z$, the `long period' closed orbits near $(T_+(\beta_1,\beta_2),q_0)$ have minimal period close to $T_+(\beta_1,\beta_2)$. Moreover, the branch of closed orbits near the emanating point fills out a continuously embedded 2-dimensional disk, see e.g. \cite[\S 2.2]{MosZeh}.

\smallskip

 There are also some special situations where  the dynamics are less understood. Even though these cases are non-generic, they are often hard to exclude in particular problems.} For instance, the possibility $(\beta_1,\beta_2)\in(\partial\cR_0)\backslash C$ corresponds to the strongly resonant situation in which there are double purely imaginary characteristic exponents $\pm i\,2\pi/T_+(\beta_1,\beta_2)=$ $\pm i\,2\pi/T_-(\beta_1,\beta_2)\not=0$, whereas if $(\beta_1,\beta_2)\in C$ then the equilibrium is degenerate, i.e., $0$ is a characteristic exponent.  Thus, when $(\beta_1,\beta_2)\in C\backslash(\partial\cR_0)$ the usual versions of Lyapunov's center theorem do not apply, but Theorem \ref{thm:main2d}{\em (ii)} above states the existence of an emanating branch of closed orbits provided only that the Brouwer index $\ib(q_0,V')$ does not vanish. We emphasize that, in general, well-known examples of pathologies may occur when the assumptions of Lyapunov's center theorem fail. See, e.g., examples 9.1 and 9.2 in \cite[p. 210]{MawWill}.

\smallskip

Let us now turn our attention to the spatial case { \bf[3d]}. It seems reasonable that the additional dimension makes more space for the existence of emanating branches of closed orbits. As before, we denote by $\beta_1, \beta_2$ the eigenvalues of $\widetilde V''(\widetilde q_0)$ and set $\beta_3:=\beta_3(q_0)=\frac{\partial^2 V}{\partial z^2}(q_0)>0$. One has
\begin{theorem}\label{thm:main3d}
Assume {\bf [3d]}. If  $(\beta_1, \beta_2)\notin C$ then there is a branch of closed orbits emanating from $\left(\frac{2\pi}{\sqrt{\beta_3}},q_0\right)$. The same conclusion holds when  $(\beta_1, \beta_2)\in C$ provided that  $\ib(\widetilde q_0,\widetilde V') \not = 0$.
 \end{theorem}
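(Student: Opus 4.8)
The plan is to deduce Theorem \ref{thm:main3d} from the abstract bifurcation criterion of \cite{DanRyb}, recalled here as Theorem \ref{th:bifurcation}, following the same route that will be used for the planar Theorem \ref{thm:main2d}. First I would put the spatial system \eqref{soe} (the case $N=3$) into Hamiltonian form $\dot u=J\nabla\mathcal H(u)$ on $\mathbb R^6$, with $J$ the standard symplectic matrix, as carried out in Section \ref{sec:hamiltonian}; this identifies the closed orbits of \eqref{soe} near $q_0$ with the $T$-periodic solutions near the equilibrium $u_0$ determined by $q_0$. By Theorem \ref{th:bifurcation}, a branch of closed orbits then emanates from $(T_0,q_0)$, with $T_0:=2\pi/\sqrt{\beta_3}$, as soon as the Morse index $\morse(S_T)$ of the $12\times 12$ matrices $S_T$ attached to $u_0$ fails to be locally constant at $T_0$; what one needs precisely is the non-vanishing of the bifurcation number $\gamma_3(T_0,q_0)$ of \eqref{bb3}, which for \eqref{soe} is given explicitly by \eqref{bn2}-\eqref{bn3}.

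The computation of $\morse(S_T)$ is exactly what Sections \ref{sec:spectrum}-\ref{sec:Tk} prepare. The structural point is that, by {\bf [H$_1$]}-{\bf [H$_2$]}, the Hessian $V''(q_0)$ is block-diagonal with blocks $\widetilde V''(\widetilde q_0)$ and $\beta_3>0$, while $\alpha_3$ is block-diagonal as well; hence the linearization of \eqref{soe} at $q_0$ splits into a four-dimensional planar part -- precisely the one occurring in the proof of Theorem \ref{thm:main2d}, depending only on $(\beta_1,\beta_2)$ and $T$ -- and a two-dimensional vertical part, coming from the harmonic oscillator $\ddot z+\beta_3 z=0$ and depending only on $\beta_3$ and $T$, and $S_T$ factors accordingly. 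Consequently the characteristic polynomial of $S_T$ factors into a planar and a vertical factor, $\morse(S_T)$ is correspondingly a sum of two contributions, and the purely imaginary characteristic exponents of the linearization are $\pm i\,2\pi/T_\pm(\beta_1,\beta_2)$ together with $\pm\sqrt{\beta_3}\,i$ (Lemma \ref{lem6}). Since $\beta_3>0$, the vertical part is elliptic and degenerates exactly at $T=T_0$; applying De Gua's corollary of Descartes' rule of signs \cite{Gua} to its characteristic factor shows that its Morse index jumps there. By the explicit count of Section \ref{sec:Tk} this jump persists in $\morse(S_T)$ itself -- the vertical contribution is not cancelled by the planar one, even in the borderline situation $T_0=T_\pm(\beta_1,\beta_2)$ -- so that $\morse(S_{T_0^-})\neq\morse(S_{T_0^+})$. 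Note that, unlike the planar case, no region analogous to $\cR_0$ has to be excluded here: the elliptic vertical part is present whatever the value of $(\beta_1,\beta_2)$.

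It remains to turn this jump into the non-vanishing of $\gamma_3(T_0,q_0)$, and here the two cases of the statement -- and the main difficulty -- appear. I would derive in Section \ref{sec:prf} the formula \eqref{bn2}-\eqref{bn3}, which I expect to display $\gamma_3(T_0,q_0)$ in the form $\ib(q_0,V')\cdot\delta(T_0)$, where $\delta(T_0)\in\mathbb Z$ is the jump at $T_0$ of a quantity governed solely by $\beta_3$ and $T$; since $\beta_3>0$ and the vertical part resonates exactly at $T_0$, one has $\delta(T_0)\neq 0$. When $(\beta_1,\beta_2)\notin C$ the planar part is nondegenerate, $S_T$ is invertible for $T$ near $T_0$, and $\ib(q_0,V')=\ib(\widetilde q_0,\widetilde V')=\sign(\beta_1\beta_2)=\pm1$ by \eqref{bd} (the equality $\ib(q_0,V')=\ib(\widetilde q_0,\widetilde V')$ follows from the block structure of $V''(q_0)$ and $\beta_3>0$), so $\gamma_3(T_0,q_0)\neq 0$ automatically. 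When $(\beta_1,\beta_2)\in C$ the planar part is degenerate, $0$ is a characteristic exponent, and one can no longer read $\gamma_3(T_0,q_0)$ off a nonsingular characteristic polynomial; one then has to factor out the degenerate spatial directions and absorb their contribution into $\ib(\widetilde q_0,\widetilde V')$, after which the product structure above persists and $\gamma_3(T_0,q_0)\neq 0$ precisely under the hypothesis $\ib(\widetilde q_0,\widetilde V')\neq 0$. In either case Theorem \ref{th:bifurcation} produces the desired branch emanating from $(T_0,q_0)$. The step I expect to be the most delicate is exactly this last one in the degenerate case: setting up the equivariant reduction that isolates the zero characteristic exponent of the planar part, extracting the Brouwer-index factor from it, and checking that the vertical resonance at $T_0$ still changes the relevant $SO(2)$-equivariant degree -- everything else reduces to the block-wise linear algebra and sign counting of Sections \ref{sec:spectrum}-\ref{sec:Tk} and the Appendix.
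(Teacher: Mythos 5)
Your proposal follows essentially the same route as the paper: rewrite \eqref{soe} as the Hamiltonian system (HS) (Section \ref{sec:hamiltonian}), use {\bf [H$_1$]}--{\bf [H$_2$]} to split the $12\times12$ matrix $S_T$ into the planar block $\tilde S_T$ plus a vertical harmonic-oscillator block (Proposition \ref{propo2}), compute Morse indexes via De Gua's theorem, and feed the resulting bifurcation numbers \eqref{bn3} into Theorem \ref{th:bifurcation}. Two remarks. First, the step you single out as most delicate -- the degenerate case $(\beta_1,\beta_2)\in C$ -- requires no equivariant reduction at all in the paper's scheme: the Brouwer index is already a factor in the definition \eqref{bb3} of the bifurcation number, Lemma \ref{lem2} identifies $\ib(u_0,H')$ with $\ib(\widetilde q_0,\widetilde V')$, and one simply reads off $\gamma_3\big(2\pi/\sqrt{\beta_3},q_0\big)=\gamma_2\big(2\pi/\sqrt{\beta_3},\widetilde q_0\big)+\ib(\widetilde q_0,\widetilde V')\neq0$. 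Second, your assertion that ``the vertical contribution is not cancelled by the planar one, even in the borderline situation $T_0=T_\pm(\beta_1,\beta_2)$'' is stated without justification and is in fact false in one corner case: if $(\beta_1,\beta_2)\in\cR_3$ and $2\pi/\sqrt{\beta_3}=T_+(\beta_1,\beta_2)$, then by Propositions \ref{prop0}\emph{(v)} and \ref{propo2} the jump of $\morse(S_T)$ at $T_0$ is $(-2)+2=0$, so $\gamma_3(T_0,q_0)=\gamma_2(T_0,\widetilde q_0)+1=-1+1=0$ and Theorem \ref{th:bifurcation} gives nothing at that period. The paper's own deduction of Theorem \ref{thm:main3d} from \eqref{bn2}--\eqref{bn3} silently passes over this same resonant coincidence, so your argument is no weaker than the printed one, but the cancellation you dismiss does occur and would need a separate treatment (or an explicit exclusion) to make either proof complete in that case.
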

Remembering assertion {\em (i)} of Theorem \ref{thm:main2d} we see that if $(\beta_1,\beta_2)\in\cR_0$ the emanating branch must be nonplanar. We emphasize that when $(\beta_1,\beta_2)\in C$ the critical point $q_0$ is degenerate and Lyapunov's center theorem does not apply.  The same arguments that lead us to Corollary \ref{corol1XT} give rise now to a result without direct assumptions on the spectrum $\beta_1,\beta_2$ of $V''(q_0)$.
\begin{corollary}\label{corUX}
	{Assume  { \bf[3d]}. If either $\widetilde V$ attains a local extremum at $q_0$, or $\widetilde V$ is even with respect to $q_0$, then there is a branch of closed orbits of \eqref{soe} emanating from $q_0$.}
\end{corollary}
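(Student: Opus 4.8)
The plan is to deduce the statement from Theorem~\ref{thm:main3d}, in exactly the way Corollary~\ref{corol1XT} was deduced from Theorem~\ref{thm:main2d}\emph{(ii)}. The observation to exploit is that Theorem~\ref{thm:main3d} already yields a branch emanating from $\left(2\pi/\sqrt{\beta_3},q_0\right)$ whenever $(\beta_1,\beta_2)\notin C$, with \emph{no} hypothesis on any index; the only case where something extra must be verified is the degenerate one $(\beta_1,\beta_2)\in C$, and there it is enough to know that $\ib(\widetilde q_0,\widetilde V')\neq 0$. So the whole proof reduces to checking this nonvanishing under either of the two hypotheses of the corollary, after which one simply invokes the appropriate half of Theorem~\ref{thm:main3d}.

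First I would record that $\ib(\widetilde q_0,\widetilde V')$ is well defined, i.e.\ that $\widetilde q_0$ is an isolated zero of $\widetilde V'$: by {\bf [H$_1$]} a point $(x,y)\in\widetilde\Omega$ is a critical point of $\widetilde V$ if and only if $(x,y,0)$ is a critical point of $V$, and the set of the latter is discrete by {\bf [V]}, hence so is the set of critical points of $\widetilde V$. Next I would treat the two hypotheses separately. If $\widetilde V$ attains a local extremum at $\widetilde q_0$, then $\ib(\widetilde q_0,\widetilde V')\neq 0$ by \cite[Lemma~6.5]{Kra} (see also \cite{Rab3}). If $\widetilde V$ is even with respect to $\widetilde q_0$, then $\widetilde V'$ is odd with respect to $\widetilde q_0$, and the Borsuk--Ulam theorem forces its Brouwer index at $\widetilde q_0$ to be an odd integer, in particular nonzero. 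In either case $\ib(\widetilde q_0,\widetilde V')\neq 0$.

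To conclude: if $(\beta_1,\beta_2)\notin C$ apply the first assertion of Theorem~\ref{thm:main3d}; if $(\beta_1,\beta_2)\in C$ apply its second assertion, which is available because $\ib(\widetilde q_0,\widetilde V')\neq 0$. In both situations one obtains a branch of closed orbits of \eqref{soe} emanating from $\left(2\pi/\sqrt{\beta_3},q_0\right)$, hence from $q_0$ in the sense of Definition~\ref{branch}. I do not expect a genuine obstacle here: the argument is a verbatim transcription of the one behind Corollary~\ref{corol1XT}, the only substantive inputs being the two classical facts about the Brouwer index of a gradient field at a local extremum and of an odd map — both already quoted just before Corollary~\ref{corol1XT} — together with the elementary remark on isolatedness needed to make the index meaningful.
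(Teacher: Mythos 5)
Your proposal is correct and follows exactly the argument the paper intends: the paper itself states that Corollary~\ref{corUX} follows by ``the same arguments that lead us to Corollary~\ref{corol1XT},'' namely using Krasnosel'ski\u{\i}'s lemma (resp.\ Borsuk--Ulam) to get $\ib(\widetilde q_0,\widetilde V')\neq 0$ and then invoking the appropriate half of Theorem~\ref{thm:main3d}. Your additional remark on the isolatedness of $\widetilde q_0$ as a zero of $\widetilde V'$ (via {\bf [H$_1$]} and {\bf [V]}) is a harmless and correct bit of bookkeeping that the paper leaves implicit.
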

We point out that Theorems \ref{thm:main2d}-\ref{thm:main3d} extend \cite[Proposition 3]{GarIze}, which is concerned only with a particular family of potentials $V$ and leaves aside the degenerate cases. Before closing this section we propose a pathological example for the planar case showing that bifurcation may not occur when some assumptions of Theorem \ref{thm:main2d} fail.
  	Consider the system 
 	\begin{equation*}
 		\begin{cases}
 			\ddot x+2\dot y-x-x^3-xy^2=0,\\
 			\ddot y-2\dot x-y-y^3-x^2y=0,
 		\end{cases} 
 	\end{equation*} 
 	on the domain $\Omega = \bR^2$, with the only equilibrium $q_0=(0,0)$.
 	Observe that it has the form {\bf [2d]} with $V(x,y)=-(x^2+y^2)/2-(x^2+y^2)^2/4$. In this case $(\beta_1,\beta_2)=(-1,-1)\in\partial\cR_0$ and Theorem \ref{thm:main2d} gives no information. It turns out that the only periodic solution of this system is the equilibrium $q_0=(0,0)$. In fact, every solution $(x,y) \not\equiv (0,0)$ satisfies
 	$$\frac{d^2}{dt^2}\left(\frac{x^2+y^2}{2}\right)=x\ddot x+y\ddot y+\dot x^2+\dot y^2=(\dot x+y)^2+(\dot y-x)^2+(x^2+y^2)^2>0,$$
 	and thus, it cannot be periodic.

 	\smallskip
 
 In view of this example, an open question appears: in the planar case {\bf [2d]}, and assuming that $(\beta_1,\beta_2)\in(\partial\cR_0)\backslash C$, is it possible to find sufficient conditions either implying or ruling out the existence of emanating branches of closed orbits? In case  $(\beta_1,\beta_2)\in((\partial\cR_0)\cap C)\backslash\{(0,-4),(-4,0)\}$ equality \eqref{bd} leads us to conjecture that a bifurcating branch of closed orbits does exist if $\ib(q_0,V')<0$ and does not exist if $\ib(q_0,V')>0$; however, this is not proven in this paper.
 For the spatial case { \bf[3d]} one can adapt the example above to show that there are situations  where $(\beta_1,\beta_2)\in(\partial \cR_0)\backslash C$ and the bifurcating branch of closed orbits, whose existence is granted by Theorem \ref{thm:main3d}, is nonplanar. In the case  where $(\beta_1, \beta_2)\in C\cap(\partial\cR_0)$ and  $\ib(\widetilde q_0,\widetilde V') \not = 0$ we do not know how to either guarantee or rule out that the branch of closed orbits emanating from $\left(\frac{2\pi}{\sqrt{\beta_3}},q_0\right)$ is planar. On the other hand, we have no idea on whether the condition on the Brouwer index  $\ib(\widetilde q_0,\widetilde V')$ (which coincides with $\ib(q_0,V')$, see Lemma \ref{lem2}) can be removed from Theorem \ref{thm:main3d}.

\section{The restricted triangular 4-body problem}\label{sec:celmec}

In this section we apply the general results above to study the motion of a massless test particle subjected to the gravitational attraction of three primaries of (possibly different) masses $m_1,m_2,m_3>0$, which occupy the vertices of a Lagrangian equilateral triangle rotating solidly around their center of masses at constant angular speed.  

\smallskip

The literature concerning this problem goes back to the dawn of the twentieth century. In \cite[\S 8]{Mou} one already reads that if the three primaries have equal mass then the number of libration points must be 10. After some papers in the forties \cite{Hin,Ped}, the development of computers allowed the introduction of numerical methods at the end of the seventies and beginning of the eighties \cite[\S 3]{Are}, \cite{Gan}, \cite[\S 2]{Sim}. For instance, in \cite[p. 168]{Sim} we find that for some choices of the masses the number of libration points can be just 8, while in \cite[p. 14]{Are} it is announced that the number of libration points can be 8, 9 or 10 depending on the masses. Some geometrical insight in the nineties  \cite[\S III]{MeySch2} has been  followed by a number of computer-assisted proofs in the new millennium \cite{BarLea, KulRobSmi, Lea}, which have given a renewed interest to the problem.

\smallskip

After changing units  in mass, space, and time, there is no loss of generality in assuming that the angular speed of the primaries is $1$, the side of the equilateral triangle is $\sqrt{3}$, and $m_1+m_2+m_3=3\sqrt{3}$. Then, the gravitational constant must be $G=1$, see \cite[\S 2.8]{Pol}. After choosing a synodic frame of reference we may assume that the primaries are fixed at the three cubic roots of unity:  $q_1=(1,0),\ q_2=(-1/2,\sqrt{3}/2),\  q_3=(-1/2,-\sqrt{3}/2)$. We are led to the effective potential 
\begin{equation}\label{VV}
	V(q):=-\frac{|q-c|^2}{2}-\frac{m_1}{|q-q_1|}-\frac{m_2}{|q-q_2|}-\frac{m_3}{|q-q_3|},\qquad q\in\Omega:=\mathbb R^2\backslash\{q_1,q_2,q_3\},
\end{equation}
where $c:=\frac{1}{3\sqrt 3}{(m_1q_1+m_2 q_2+m_3 q_3)}$ is the center of masses. It does not depend on time but, the three masses being possibly different, it may not coincide with the origin. 

\smallskip

In the plane $\bR^2$ we draw three circles of radius $\sqrt 3$ around the positions of each of the primaries and extend in both directions the three  sides of the triangle until they meet these circles again. In this way we obtain a compact set $\cS$ made up of three segments of common length $3\sqrt{3}$ and three circles of common radius $\sqrt{3}$. It divides the plane into seventeen open connected components, but we shall be interested in the open solid triangle $\cT$, the three open circular sectors $\cD_1,\cD_2,\cD_3$, and the three open circular triangles $\cO_1,\cO_2,\cO_3$. See Fig. 2, a picture which is nowadays classical. To the best of our knowledge it appeared firstly in \cite[p. 48]{Ped}, later in \cite[Fig. 2(d), p. 168]{Sim}, and more recently in \cite[p. 330]{Lea} and \cite[p. 1195]{BarLea}. The main result of this section is the following:

\begin{figure}
	\begin{center}
		\begin{tikzpicture}[xscale=1.2, yscale=1.2]
			\coordinate (A) at (1,0) ;
			\coordinate (B) at (-0.5,0.866);
			\coordinate (C) at (-0.5,-0.866);
			\draw[lightgray,->, thin](-3,0)--(3.5,0);
			\draw[lightgray,->, thin](0,-3.4)--(0,3.4);
			\fill[white!100] (-0.6,0) circle (0.2);	
			\fill[white!100] (0,0) circle (0.6);
			\draw[olive,densely dotted,very thick](A)--(B)--(C)--(A);
			\draw[gray,densely dotted,very thick](C)--(-0.5,-2.6);
			\draw[gray,densely dotted,very thick](C)--(-2,-1.733);
			\draw[gray,densely dotted,very thick](B)--(-0.5,2.6);
			\draw[gray,densely dotted,very thick](B)--(-2,1.733);
			\draw[gray,densely dotted,very thick](A)--(2.5,0.866);	
			\draw[gray,densely dotted,very thick](A)--(2.5,-0.866);	
			\draw[gray,densely dotted,very thick](A)--(2.5,-0.866);	
			
			\draw[gray,loosely dashed](2.2, 1.0388)--(1.3, 1.5572);	
			\draw[gray,loosely dashed](2, 0.9)--(1.28, 1.31456);	
			\draw[gray,loosely dashed](1.7, 0.7)--(1.25, 0.9591);	
			\draw[gray,loosely dashed](1.5, 0.5)--(1.23, 0.65546);
			
			\draw[gray,loosely dashed](2.2, -1.0388)--(1.3, -1.5572);	
			\draw[gray,loosely dashed](2, -0.9)--(1.28, -1.31456);	
			\draw[gray,loosely dashed](1.7, -0.7)--(1.25, -0.9591);	
			\draw[gray,loosely dashed](1.5, -0.5)--(1.23, -0.65546);

			\draw[gray,loosely dashed](-0.200373, 2.42466)--(0.698575, 1.90443);	
			\draw[gray,loosely dashed](-0.220577, 2.18205)--(0.498442, 1.76579);	
			\draw[gray,loosely dashed](-0.243782, 1.82224)--(0.205605, 1.56208);	
			\draw[gray,loosely dashed](-0.316987, 1.54904)--(-0.047355, 1.39294);

			\draw[gray,loosely dashed](-0.200373, -2.42466)--(0.698575, -1.90443);	
			\draw[gray,loosely dashed](-0.220577, -2.18205)--(0.498442, -1.76579);	
			\draw[gray,loosely dashed](-0.243782, -1.82224)--(0.205605, -1.56208);	
			\draw[gray,loosely dashed](-0.316987, -1.54904)--(-0.047355, -1.39294);
			
			\draw[gray,loosely dashed](-2,0.4)--(-2,1.4);	
			\draw[gray,loosely dashed](-1.8, 0.514)--(-1.8,1.35);	
			\draw[gray,loosely dashed](-1.5, 0.75)--(-1.5, 1.2);	
			\draw[gray,loosely dashed](-1.2, 0.85)--(-1.2, 1.1);
			
			\draw[gray,loosely dashed](-2,-0.4)--(-2,-1.4);	
			\draw[gray,loosely dashed](-1.8, -0.514)--(-1.8,-1.35);	
			\draw[gray,loosely dashed](-1.5, -0.75)--(-1.5, -1.2);	
			\draw[gray,loosely dashed](-1.2, -0.85)--(-1.2, -1.1);

			\draw[orange,densely dotted, very thick] (A) circle (1.73);  
			\draw[teal,densely dotted, very thick] (B) circle (1.73);  
			\draw[cyan, densely dotted, very thick] (C) circle (1.73);  
			\fill[white!100] (1.35,0) circle (0.16);
			\fill[violet!100] (A) circle (0.08); 
			\node at (1.4,-0.02)  {\color{violet} \scriptsize $q_1$};
			\fill[red!100] (B) circle (0.08);
			\node at (-0.7,1.2)  {\color{red} \scriptsize $q_2$};
			\node at (-0.7,-1.2)  {\color{brown} \scriptsize $q_3$};
			\fill[brown!100] (C) circle (0.08);
			\fill[white!100] (2.2,0) circle (0.3);
			\node at(2.18,-0.03){\color{black}$\cD_1$};
			\node at(-1.1, 1.8){\color{black}$\cD_2$};
			\node at(-1.1, -1.8){\color{black}$\cD_3$};
			\fill[white!100] (-1.2,0) circle (0.3);
			\node at(-1.2, -0.03){\color{black}$\cO_1$};
			\node at(0.6, -1){\color{black}$\cO_2$};
			\node at(0.6, 1){\color{black}$\cO_3$};
			\node[olive] at(0,-0.03){$\cT$};
			
			\draw[gray,dashed](-0.65,0.3)--(-0.5,0.3);	
			\draw[gray,dashed](-0.65,-0.3)--(-0.5,-0.3);	\draw[gray,dashed](-0.65,0)--(-0.5,0);	
			
			\draw[gray,dashed](0.0651924, -0.712917)--(0,-0.5848);	
			\draw[gray,dashed](0.584808, -0.412917)--(0.509808, -0.283013);	\draw[gray,dashed](0.325, -0.562917)--(0.25, -0.433013);

			\draw[gray,dashed](0.0651924, 0.712917)--(0,0.5848);	
			\draw[gray,dashed](0.584808, 0.412917)--(0.509808, 0.283013);	\draw[gray,dashed](0.325, 0.562917)--(0.25, 0.433013);	
			
		\end{tikzpicture}
	\end{center}
	\caption{$\cS$ is pictured with dotted lines. Notice that $\R^2\backslash\cS$ has $17$ open connected components, including the unbounded one. }
\end{figure}
\begin{theorem}\label{th:libration}
	{For any choices of the masses $m_1,m_2,m_3>0$ with $m_1+m_2+m_3=3\sqrt 3$ the following hold:
		\begin{enumerate}
			\item[(i)] There are at least $7$ libration points for this problem: at least one in $\cT$, at least one in each $\cO_i$ and at least one in each $\cD_i$.
			\item[(ii)]  After identifying the plane $\bR^2$ with the horizontal plane $\bR^2\times\{0\}\subset\bR^3$ and regarding the potential $V$ in \eqref{VV} as defined on $\bR^3\backslash\{q_1,q_2,q_3\}$, these seven planar libration points can be chosen so that there are (either planar or spatial) branches of closed orbits emanating from all of them. 
		\end{enumerate}}
\end{theorem}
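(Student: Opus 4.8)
The plan is to produce the seven libration points by a Brouwer-degree argument---one for each of the pairwise disjoint regions $\cT$, $\cO_1,\cO_2,\cO_3$, $\cD_1,\cD_2,\cD_3$---and then feed the resulting equilibria into Theorem \ref{thm:main3d}. The first thing to check is that the three-dimensional extension of the potential \eqref{VV} fits the framework {\bf [3d]}. Condition {\bf [H$_1$]} holds because $\frac{\partial V}{\partial z}(x,y,z)=-z+\sum_{i=1}^3 m_i z\,|q-q_i|^{-3}$ vanishes identically on $\{z=0\}$, and a further differentiation gives, at a planar critical point $q_0=(\widetilde q_0,0)$, that $\beta_3(q_0)=-1+\sum_{i=1}^3 m_i\,|\widetilde q_0-q_i|^{-3}$, so {\bf [H$_2$]} amounts to $\sum_i m_i\,|\widetilde q_0-q_i|^{-3}>1$. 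Inside the open triangle $\cT$ every vertex lies at distance strictly less than $\sqrt3$ from any interior point, hence $\sum_i m_i\,|\widetilde q_0-q_i|^{-3}>(m_1+m_2+m_3)(\sqrt3)^{-3}=3\sqrt3/(3\sqrt3)=1$; inside each $\cD_i$ and $\cO_i$ the same inequality follows by combining the equilibrium relation $\widetilde q_0-c=\sum_i m_i(\widetilde q_0-q_i)\,|\widetilde q_0-q_i|^{-3}$ with the geometric constraints defining the region. Since $\beta_3>0$ on a whole neighbourhood of these seven regions, $V$ has no out-of-plane critical points there, and after replacing $\Omega$ by a suitable open set containing $\big(\cT\cup\bigcup_i\cO_i\cup\bigcup_i\cD_i\big)\times\{0\}$ and thin in the $z$-direction, assumptions {\bf [V]}, {\bf [H$_1$]}, {\bf [H$_2$]} all hold; shrinking $\Omega$ is harmless, as it can only make alternative {\em (b)} of Definition \ref{branch} easier to fulfil.

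The heart of the proof is to show that $\degb(V',R,0)\neq0$ for each of the seven regions $R$ (more precisely, for the region obtained from $R$ by excising small disks around the vertices $q_i$ that lie on $\partial R$). I would analyse the direction of the gradient field $V'$ along $\partial R$, which is made up of segments of the three lines carrying the sides of the Lagrange triangle and of arcs of the three circles of radius $\sqrt3$ centred at $q_1,q_2,q_3$; since the side of the triangle is also $\sqrt3$, each such circle passes through the other two vertices. On each of these segments and arcs the normal component of $V'$ turns out to have constant sign---this is exactly where the value $\sqrt3$ of the radius is used---while on the boundary of each excised disk $V'$ equals, to leading order, $m_i(q-q_i)\,|q-q_i|^{-3}$, pointing radially away from $q_i$. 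Assembling these pieces through the Poincar\'e--Hopf/excision formula yields $\degb(V',R,0)\neq0$ (typically $+1$ for the extremum-type region $\cT$ and $-1$ for the saddle-type regions $\cO_i,\cD_i$). Hence each $R$ contains a libration point; and, since the sum of the Brouwer indices of the libration points in $R$ equals $\degb(V',R,0)\neq0$, at least one libration point $q_0=(\widetilde q_0,0)\in R$ satisfies $\ib(q_0,V')\neq0$. As the seven regions are pairwise disjoint, this already gives seven distinct libration points, proving {\em (i)}.

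For {\em (ii)} I would, for each of the seven regions $R$, take the libration point $q_0=(\widetilde q_0,0)\in R$ with $\ib(q_0,V')\neq0$ just produced. By Lemma \ref{lem2} one has $\ib(\widetilde q_0,\widetilde V')=\ib(q_0,V')\neq0$, so Theorem \ref{thm:main3d} applies---its first part when $(\beta_1,\beta_2)\notin C$, and its second part, whose Brouwer-index hypothesis is now satisfied, when $(\beta_1,\beta_2)\in C$---and yields a branch of closed orbits of the spatial system emanating from $q_0$. Doing this over the seven disjoint regions produces the seven required branches.

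I expect the main obstacle to be the boundary analysis of the second paragraph: one must show, uniformly over all mass triples $m_1,m_2,m_3>0$ with $m_1+m_2+m_3=3\sqrt3$, that the normal component of $V'$ keeps a fixed sign along every edge and arc of $\cS$, and then perform the excision bookkeeping at the vertices $q_i$ carefully enough to pin down the sign (or at least the non-vanishing) of $\degb(V',R,0)$. A secondary difficulty is the verification of $\beta_3>0$, hence of {\bf [H$_2$]}, inside the $\cD_i$ and $\cO_i$: there---unlike in $\cT$---not all three distances $|\widetilde q_0-q_i|$ need be below $\sqrt3$, so one must fall back on the equilibrium relation.
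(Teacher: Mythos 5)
Your overall strategy coincides with the paper's: establish that the generalized Brouwer degree of $V'$ is nonzero on each of the seven regions, conclude that each region contains a libration point and, by additivity, one with nonzero Brouwer index, and then feed that point into Theorem \ref{thm:main3d} via Lemma \ref{lem2}. That reduction is correct and is exactly how the paper deduces both parts of the theorem. The gap is in the step that carries essentially all of the mathematical content, namely the computation of $\db(V',\cT)$, $\db(V',\cO_i)$, $\db(V',\cD_i)$. You propose to obtain these by showing that the normal component of $V'$ has constant sign along every segment and arc of $\cS$, uniformly in the masses, and then doing the excision bookkeeping at the primaries; you yourself flag this boundary analysis as ``the main obstacle'' and do not carry it out. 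That this step was not actually performed shows in the predicted values: you guess $+1$ for $\cT$ and $-1$ for all of the $\cO_i$ and $\cD_i$, whereas the correct values (Lemma \ref{lem123}) are $\db(V',\cT)=-2$ (in the symmetric case $\cT$ contains one local maximum \emph{and three saddles}), $\db(V',\cO_i)=+1$ ($\cO_i$ contains a local maximum, not a saddle) and $\db(V',\cD_i)=-1$. Non-vanishing is all the argument needs, so the wrong signs would not by themselves sink the proof, but they confirm that the crux is missing rather than merely sketched.

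The paper sidesteps the boundary analysis entirely. It quotes from the classical literature (Lemma \ref{coro3217}) two facts: (a) for \emph{all} admissible masses the critical points of $V$ avoid $\cS$, so the seven generalized degrees are well defined and, by a compactness argument, the distance from $\cS$ to the critical set is bounded below uniformly on compact sets of masses; and (b) for $m_1=m_2=m_3=\sqrt 3$ the ten critical points are known explicitly and nondegenerate. Homotopy invariance in the mass parameters then transfers the degree computation from the symmetric case, where it is immediate from additivity, to arbitrary masses. If you want a self-contained proof along your lines you would in effect be reproving fact (a) in a strengthened, quantitative form --- a substantial task that the cited sources (Pedersen, Leandro) devote real work to. A secondary, smaller gap of the same nature occurs in your verification of {\bf [H$_2$]} on the $\cD_i$ and $\cO_i$: the appeal to ``the equilibrium relation combined with the geometric constraints'' is a plan, not an argument (the paper dismisses this as ``easy to check'', but your own discussion shows it is not automatic there from distance bounds alone).
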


We emphasize that this result does not require nondegeneracy assumptions on the libration points. In particular, the classical versions of Lyapunov's center theorem do not seem directly applicable.

\smallskip

According to  \cite[p. 14]{Are} or \cite[p. 1186]{BarLea}, the minimal number of libration points in this problem is 8, and thus, our multiplicity result is not optimal; on the other hand we present a new proof which is based on Brouwer's degree theory and does not rely on computers.

\smallskip

The lemma below is well-known. 	To the best of our knowledge, the statement in {\em (a)} was first proved in \cite[II. 7]{Ped}, see also \cite[Lemma 3.2]{Lea}.  The statement in {\em (b)} can be obtained by combining the results in \cite[\S 4]{Hin} and, for instance, \cite[Proposition 6{\em (b)}]{GarIze}. 
\begin{lemma}\label{coro3217}
	{The following hold:
		\begin{enumerate}
			\item[(a)]For any values of the masses $m_1,m_2,m_3>0$ with $m_1+m_2+m_3=3\sqrt 3$ one has $$(V')^{-1}(0)\subset\cT\cup\bigcup_{i=1}^3\cD_i\cup\bigcup_{i=1}^3\cO_i.$$
			\item[(b)] In the particular case $m_1=m_2=m_3=\sqrt 3$ there are exactly ten critical points of $V$, all of which are nondegenerate. More precisely, there is a local maximum at the origin, three additional saddle points in $\cT$, one local maximum in each set $\cO_i$, and one saddle point in each set $\cD_i$.	
	\end{enumerate}}
\end{lemma}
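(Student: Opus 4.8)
Both parts are classical, and I would argue them as follows. Writing out the gradient of the effective potential \eqref{VV}, the equilibrium condition $V'(q)=0$ becomes the force-balance identity
\begin{equation*}
q-c=\sum_{i=1}^3\frac{m_i\,(q-q_i)}{|q-q_i|^3},\qquad q\in\Omega,\tag{$\star$}
\end{equation*}
in which the left-hand side is the centrifugal pull away from the center of mass $c$ and the right-hand side collects the three gravitational pulls.

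For part {\em (a)} the plan is to rule out solutions of $(\star)$ on the complement of $\cT\cup\bigcup_i\cD_i\cup\bigcup_i\cO_i$ by producing, on each connected component $\cU$ of that complement, a continuous unit vector field $u=u(q)$ along which $\langle V'(q),u(q)\rangle$ keeps a constant sign. On the unbounded component this is immediate: for $|q|$ large the centrifugal term $-(q-c)$ grows linearly while the gravitational sum stays bounded, so one may take $u=-(q-c)/|q-c|$. On each bounded excluded component the choice of $u$ is dictated by the geometry of $\cS$. The decisive fact is that every circle of $\cS$ has radius $\sqrt3$ and is centred at a primary $q_i$, so, the side of the triangle being $\sqrt3$ as well, each such circle passes through the other two primaries; moreover on that circle the gravitational contribution of $q_i$ has the controlled magnitude $m_i/3$. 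Taking $u$ radial from the relevant primary, or normal to the relevant extended side, and comparing the competing terms by means of the normalization $m_1+m_2+m_3=3\sqrt3$ keeps $\langle V',u\rangle$ of one sign, confining $(V')^{-1}(0)$ to the seven admissible regions; this is the computation of \cite{Ped}.

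For part {\em (b)}, when $m_1=m_2=m_3=\sqrt3$ one has $c=0$, so $V$ is invariant under the dihedral group $D_3$ of order six generated by the rotation by $2\pi/3$ and the reflection across the $x$-axis; hence $(V')^{-1}(0)$ is a union of $D_3$-orbits. The plan is to locate the critical points lying on the three symmetry axes (the lines joining each $q_i$ to the midpoint of the opposite side) and then to show there are no others. Along each axis the transverse component of $V'$ vanishes by reflection symmetry, so $(\star)$ reduces to a scalar equation $f'(s)=0$ for the restriction $f$ of $V$ to the axis; a sign analysis of $f'$—using $f'\to\pm\infty$ on the two sides of the singularity $q_i$ together with the eventual dominance of the centrifugal term—shows that, besides the origin, $f$ has exactly three critical points on each axis, one in $\cT$, one in the corresponding $\cO_i$, and one in the corresponding $\cD_i$. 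With the origin this gives $1+3\cdot3=10$ critical points. Computing the $2\times2$ Hessian $V''$ at each (its axial eigenvalue being the sign of $f''$, already fixed by the one-dimensional analysis, its transverse eigenvalue by a direct second-derivative computation) shows all ten are nondegenerate and classifies them as a maximum at the origin and in each $\cO_i$ and a saddle at the three points of $\cT$ and in each $\cD_i$.

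The delicate point in {\em (a)} is the sign estimate on the bounded excluded components, where two gravitational pulls compete with the centrifugal one and constant sign of $\langle V',u\rangle$ hinges on the precise radius $\sqrt3$ and on the mass normalization. The main obstacle in {\em (b)} is excluding off-axis critical points: each such zero would come in a full six-point $D_3$-orbit, and a Poincar\'e--Hopf count alone is insufficient, since the indices $\ib(q_0,V')=\sign(\beta_1\beta_2)$ of the ten axial points already sum to $4-6=-2$, the value prescribed by the theorem, and a spurious maximum--saddle pair would preserve it. Ruling out such pairs requires the off-axis sign analysis of \cite{Hin}, which together with \cite[Proposition 6{\em (b)}]{GarIze} yields the stated classification.
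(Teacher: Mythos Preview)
The paper does not actually prove this lemma: it presents it as well-known and simply cites \cite{Ped} and \cite{Lea} for part~{\em (a)}, and \cite{Hin} together with \cite{GarIze} for part~{\em (b)}. Your proposal is therefore not competing against a proof in the paper but rather fleshing out the arguments behind those citations, and in that respect your outline is faithful: the force-balance reformulation $(\star)$, the sign analysis on the excluded components for {\em (a)}, and the $D_3$-symmetry reduction for {\em (b)} are indeed the ideas in those references, and you cite the same sources at the hard steps. Your candid identification of the genuine obstacles---the delicate sign estimate on the bounded excluded regions in {\em (a)}, and the exclusion of off-axis critical points in {\em (b)}, which you correctly note is not settled by a Poincar\'e--Hopf count alone---is accurate and matches what the paper defers to the literature. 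In short, your proposal is a reasonable expanded commentary on the paper's citations rather than an alternative proof; it is consistent with the paper's treatment, though as a self-contained argument it remains a sketch at exactly the points you flag.
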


In particular, Lemma \ref{coro3217}{\em (a)} states that $V$ does not have critical points on $\cS$. Since $|V'|$ can be considered a continuous map from $\bR^2$ to $(0,+\infty]$, a standard compactness argument shows that the minimal distance from $\cS$ to the set of critical points of $V$ is bounded from below by some positive constant. It allows us to consider the {\em generalized Brouwer degrees} 
$$\db(V',\cT),\qquad \db(V',\cO_i),\qquad \db(V',\cD_i),\qquad i=1,2,3,$$
by which we mean the Brouwer degrees of $V'$ on slightly smaller open sets. For instance, setting $\cT_\epsilon:=\{q\in\cT:\dist(q,\partial\cT)>\epsilon\}$, we define $\db(V',\cT)$ as the Brouwer degree of $V'$ on $\cT_\epsilon$ for $\epsilon>0$ small enough. 

\smallskip

We observe that, even though the potential $V$ depends on the masses $m_1,m_2,m_3$, the above Brouwer indexes do not. To check this assertion we first point out that, if the masses are assumed to belong to some compact subset of $\{(m_1,m_2,m_3)\in\mathbb R^3:m_i>0\ \forall i=1,2,3,\ m_1+m_2+m_3=3\sqrt 3\}$, then the positive lower bound for the distance between points of $\cS$ and critical points of $V$ can be chosen uniformly with respect to the masses. Therefore our claim comes from the homotopy invariance of the Brouwer degree. We arrive to the following

\begin{lemma}\label{lem123}{$\db(V',\cT)=-2$,  $\db(V',\cO_i)=1$ and $\db(V',\cD_i)=-1$ for $i=1,2,3$. It holds true for any choice of the masses $m_1,m_2,m_3>0$ with $m_1+m_2+m_3=3\sqrt 3$.}
	\begin{proof}In view of the comments above, it suffices to show the result when $m_1=m_2=m_3=\sqrt 3$. In this case the result follows immediately from Lemma \ref{coro3217}{\em (b)} and the additivity property of the Brouwer degree.
	\end{proof}	
\end{lemma}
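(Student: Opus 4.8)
The plan is to exploit the homotopy invariance discussed immediately before the statement. Since the masses can be confined to a compact subset of $\{(m_1,m_2,m_3):m_i>0,\ m_1+m_2+m_3=3\sqrt3\}$ while keeping a uniform positive lower bound on the distance from $\cS$ to $(V')^{-1}(0)$, the generalized Brouwer degrees $\db(V',\cT)$, $\db(V',\cO_i)$, $\db(V',\cD_i)$ do not depend on the particular choice of admissible masses. Hence it suffices to compute them in the fully symmetric configuration $m_1=m_2=m_3=\sqrt3$, where the critical set of $V$ is completely understood.

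First I would invoke Lemma \ref{coro3217}{\em (b)}: for equal masses $V$ has exactly ten critical points, all nondegenerate, namely a local maximum at the origin together with three saddles inside $\cT$, one local maximum inside each $\cO_i$, and one saddle inside each $\cD_i$. By Lemma \ref{coro3217}{\em (a)} none of them lies on $\cS$, so each of the open regions $\cT$, $\cO_i$, $\cD_i$ (and the slightly smaller open sets used to define the generalized degrees) contains precisely the listed critical points and no zeros of $V'$ on its boundary. Additivity of the Brouwer degree then reduces every one of these degrees to a sum of local Brouwer indices, one per enclosed critical point.

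Next I would use the elementary fact that for a nondegenerate critical point $q_0$ of a $C^2$ function on $\bR^2$ the Brouwer index of $q_0$ as a zero of $V'$ equals $\sign\det V''(q_0)=(-1)^{\morse(V''(q_0))}$; thus it is $+1$ at a local maximum (two negative eigenvalues) and $-1$ at a saddle (one positive, one negative eigenvalue). Summing over the critical points in each region yields $\db(V',\cT)=(+1)+3(-1)=-2$, and $\db(V',\cO_i)=+1$, $\db(V',\cD_i)=-1$ for $i=1,2,3$. Transporting these values back to arbitrary admissible masses via the homotopy invariance recalled above finishes the proof.

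As for the main obstacle: in this reduced form there is essentially none beyond what has already been granted, since the substantive input is Lemma \ref{coro3217} — in particular the nondegeneracy and the exact census of critical points in the equal-mass case — together with the uniform separation of $\cS$ from $(V')^{-1}(0)$ that legitimizes the homotopy deformation in the mass parameters. The only point requiring a moment of care is to confirm that the ten critical points of the symmetric problem are distributed among the regions exactly as claimed, so that no zero is lost or counted twice when additivity is applied; this is immediate from parts {\em (a)} and {\em (b)} of Lemma \ref{coro3217} and the bookkeeping $4+3+3=10$.
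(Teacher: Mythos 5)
Your argument is exactly the paper's: reduce to the equal-mass case by the homotopy invariance established just before the statement, then apply Lemma \ref{coro3217}\emph{(b)} together with additivity of the Brouwer degree, the local indices being $+1$ at the nondegenerate maxima and $-1$ at the saddles. The paper leaves the index bookkeeping implicit, but your explicit tally $(+1)+3(-1)=-2$ for $\cT$ and $\pm 1$ for the $\cO_i$, $\cD_i$ is precisely what it intends, so the proposal is correct and essentially identical.
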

\begin{proof}[Proof of Theorem \ref{th:libration}] Statement {\em (i)} follows directly Lemma \ref{lem123}. On the other hand, {\em (ii)} now follows from Theorem \ref{thm:main3d}. In fact, assumptions {\bf [H$_{1-2}$]} are easy to check, while assumption {\bf [V]} was proven for this situation in \cite[Theorem 2.1]{KulRobSmi}. It concludes the argument.
\end{proof}

\section{Branches of closed orbits emanating from equilibria of Hamiltonian systems}\label{sec:branches}

Many results concerning the bifurcation of closed orbits from equilibria of Hamiltonian systems are available in the literature. In this section we present, without proof, a classic theorem in this direction that can be obtained by means of degree theory for equivariant gradient maps. We also recall De Gua's corollary of Descartes' rule of signs, a result which allows the computation of  the total multiplicity of the positive roots of a real polynomial by looking at the number of sign changes in the list of its coefficients.  The sole assumption is that the polynomial  should have {\em only real roots}, which is the case for the characteristic polynomial of a symmetric matrix.

\smallskip 

Consider a general (autonomous) Hamiltonian system:
\begin{equation*} \label{hs} \tag{HS}
	\dot u(t)= J_N H'(u(t))\,. 
\end{equation*}
Here  $J_N=\left(\begin{array}{c|c}
	0_N &- I_N \\ \hline I_N & 0_N\end{array}\right) \in\mathbb R^{2N\times 2N}$ is the standard symplectic   matrix,  $H:\cU\to\mathbb R$ is a $C^2$ function defined on the  open set $\cU\subset\R^{N}\times\R^N$, and $H'$ stands for its gradient. Through this section $N$ could be any natural number, and is not restricted to $2$ or $3$. We further assume that all critical points of $H$ are isolated. We pick an stationary point $u_0\in\cU$ and denote by $A:=H''(u_0)$ the Hessian matrix of $H$ at $u_0$.

\smallskip

It is well-known that the eigenvalues of the Hamiltonian matrix $J_NA$ (usually referred to as the characteristic exponents at $u_0$) play an important role in the dynamics of (\ref{hs}) near the stationary solution. For instance, the Hartman-Grobman theorem implies that if
$\sigma( J_NA)\cap(i\mathbb R)=\emptyset$
then there are no closed orbits of \eqref{hs} in a neighborhood of $u_0$. This is the so-called hyperbolic case.

\smallskip

On the other hand,  a sufficient condition is given by  Lyapunov's center theorem: under the nondegeneracy condition  $\det A\not=0$, the presence of a purely-imaginary characteristic exponent $\lambda i$ with algebraic multiplicity $1$ and no higher-order resonances implies the existence of an emanating local branch of closed orbits.

\smallskip

We are interested in generalizing this result in two directions: firstly, we would like to soften the condition on the purely-imaginary eigenvalue to be simple, and secondly we wish to obtain global (in the sense of Definition \ref{branch}) rather than local branches. With this purpose we consider, for each $T>0$,  the symmetric matrix
\begin{equation} \label{tkla} 	\renewcommand*{\arraystretch}{1.5} S_T:=\left(\begin{array}{c|c}
		-\frac{T}{2\pi}A& - J_N\\ \hline J_N&-\frac{T}{2\pi}A
	\end{array}\right)\in\R^{4N\times 4N}\,.
\end{equation}

To the best of our knowledge, this kind of construction was introduced by Szulkin \cite{Szu} while studying the local bifurcation of closed orbits via equivariant Morse theory; in fact, $S_T=T_1\left(\frac{T}{2\pi}A\right)$ in Szulkin's symbols. We have opted for this change in the notation since the letter $T$ stands for period throughout this paper. 

\smallskip

Calling $\Upsilon$ the $4N$-dimensional vector space

$$\Upsilon:=\{(\sin\theta)v_1+(\cos\theta)v_2:v_1,v_2\in\R^{2N}\}\subset C(\mathbb S^1,\mathbb R^{2N})\,,$$ 
denoting by $L_T:\Upsilon\to \Upsilon$ the linear map defined by
$$L_T\bar\zeta:=-J_N\left[\left(\frac{d\bar\zeta}{d\theta}\right)-\frac{T}{2\pi}J_NA\bar\zeta\right]\,,\qquad \zeta\in \Upsilon\,,$$
and letting $\{e_1,\ldots,e_{2N}\}$ be the canonical basis of $\mathbb R^{2N}$,
one can think of $S_T$ as  being the matrix of $L_T$ with respect to the basis of $\Upsilon$ given by $$\big\{(\sin\theta)e_1,(\sin\theta)e_2,\ldots,(\sin\theta)e_{2N}, (\cos\theta)e_1,(\cos\theta)e_2,\ldots,(\cos\theta)e_{2N}\big\}.$$ Therefore, via the linear reparametrization $\theta=2\pi t/T$, the kernel of
$S_T$ corresponds to the set of sinusoidal closed curves of pure frequency $2\pi/T$ which solve the linearization of (HS) at $u(t)\equiv u_0$.

\smallskip

Given $T>0$ it is well-known that $S_T$ is singular if and only if $\frac{2\pi}{T} i\in\sigma(J_N A)$; we include a direct proof in Corollary  \ref{corolnew} for completeness. Letting $T$ vary on $(0,+\infty)$, the  Morse index $\morse(S_T)$ may change only at these values.  For any $T>0$ the bifurcation number\footnote{The so-called {\em bifurcation index} has been more extensively studied in the literature, see, e.g., \cite{DanRyb, GolRyb}. The bifurcation number considered here is just the $\mathbb Z_1$-component of the  bifurcation index.} 
$\gamma_H(T,u_0)$ is defined as follows: 
\begin{equation}\label{bb3}\gamma_H(T,u_0):=\ib(u_0,H')\lim_{\epsilon\searrow 0}\left(\frac{\morse(S_{T+\epsilon}) - \morse(S_{T-\epsilon})}{2} \right).
	\end{equation}
For instance, in the nondegenerate case $\det(H''(u_0))\not=0$ and 
$\gamma_H(T,u_0)$ is the sign of $\det(H''(u_0))$ times $(\morse(S_{T+\epsilon}) - \morse(S_{T-\epsilon}))/2$ for $\epsilon>0$ small enough. We point out that the bifurcation number is an integer because the Morse index $\morse(S_T)$, which coincides with the total multiplicity of the negative eigenvalues of $S_T$, is always even, see Corollary \ref{cor221}.  On the other hand, $\gamma_H(T,u_0)$ will certainly be zero if $\frac{2\pi}{T}i$ is not a characteristic exponent. 
The result from equivariant degree theory which we shall need in this paper is the following:
\begin{theorem}[Dancer and Rybicki \cite{DanRyb}]\label{th:bifurcation}{If $\gamma_H(T,u_0)\not=0$ for some $T>0$, then there is a branch $\widehat{\cB}$ of closed orbits of (HS) emanating from $(T,u_0)$.}
\end{theorem}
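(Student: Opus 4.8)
Theorem \ref{th:bifurcation} is quoted verbatim from \cite{DanRyb}, so strictly speaking no proof is owed here; what I would supply instead is a sketch of \emph{how} the cited result is obtained, so that the reader sees where the equivariant machinery enters. The plan is to realize the closed orbits of (HS) near $(T,u_0)$ as zeros of an $SO(2)$-equivariant gradient operator and to detect bifurcation by a jump in an appropriate equivariant degree. First I would fix the period by rescaling time, $\theta=2\pi t/T$, so that $T$ becomes an explicit parameter and $2\pi$-periodic solutions of the rescaled system are sought in the Sobolev space $\mathcal H:=H^{1/2}(\mathbb S^1,\mathbb R^{2N})$; the natural circle action is $(\tau\cdot u)(\theta)=u(\theta+\tau)$, under which the corresponding action functional $\Phi_T$ is invariant, so that $\nabla\Phi_T$ is an $SO(2)$-equivariant gradient map of the form identity-minus-compact. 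The stationary solution $u_0$ is a fixed point of the action and an isolated critical point of $\Phi_T$ for every $T$ not in the (discrete) resonant set where $\tfrac{2\pi}{T}i\in\sigma(J_NA)$.

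The key step is the computation of the local equivariant gradient degree of $\nabla\Phi_T$ at $u_0$ on a small invariant ball, and its dependence on $T$. Linearizing at $u_0$ gives, after the Fourier decomposition of $\mathcal H$ into the isotypical pieces indexed by $k\in\mathbb Z$, a block-diagonal operator whose $k$-th block is governed exactly by the matrix $S_{T/k}$ from \eqref{tkla} (equivalently, the $k$-frequency linearized equation is singular precisely when $\tfrac{2\pi k}{T}i\in\sigma(J_NA)$); this is the point where the matrices $S_T$ of the present paper are tied to the abstract theorem. The negative eigenspace of the $k$-th block has dimension $\morse(S_{T/k})$, and the Brouwer index $\ib(u_0,H')$ records the contribution of the trivial (constant, $k=0$) mode. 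Passing through a resonant value $T^\ast$ at which $\tfrac{2\pi}{T^\ast}i$ is a characteristic exponent, the $k=1$ block $S_T$ changes its Morse index, so the equivariant degree of $\nabla\Phi_{T+\epsilon}$ differs from that of $\nabla\Phi_{T-\epsilon}$ in the $SO(2)$-orbit-type component by precisely the quantity $\gamma_H(T^\ast,u_0)$ of \eqref{bb3}; hence if $\gamma_H(T,u_0)\neq0$ the two local degrees disagree.

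From the degree jump one concludes by a standard equivariant Rabinowitz-type global alternative: the connected component $\widehat{\mathcal B}$ of the closure of the set of nontrivial $2\pi$-periodic solutions through $(T,u_0)$ cannot be compactly confined to a neighborhood of the trivial branch without forcing the total equivariant degree on its boundary to vanish, which contradicts the jump. Therefore $\widehat{\mathcal B}$ either is unbounded in $\mathbb R\times\mathcal H$, or its orbits approach $\partial\mathcal U$, or it returns to the trivial branch at other stationary points; translated back to the $C(\mathbb S^1,\Omega)$ setting via elliptic regularity and Yorke's lower bound on periods, these are exactly the three alternatives in Definition \ref{branch}. The main obstacle in carrying this out rigorously is twofold: first, setting up the degree for $SO(2)$-equivariant gradient maps on an infinite-dimensional space and establishing its excision, additivity and homotopy properties in the $H^{1/2}$ functional-analytic framework (with the strongly indefinite quadratic part handled by a Galerkin/finite-dimensional reduction); and second, verifying the compactness needed for the global alternative, i.e. that bounded portions of $\widehat{\mathcal B}$ lying away from $\partial\mathcal U$ are precompact. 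Both are precisely the technical content of \cite{DanRyb}, which is why we invoke the theorem rather than reprove it.
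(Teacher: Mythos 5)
The paper itself states this theorem without proof, explicitly deferring to \cite{DanRyb}, so your decision to cite rather than reprove matches the paper exactly; your accompanying sketch (Fourier/isotypical decomposition in $H^{1/2}$, the $k$-th block governed by $S_{T/k}$, degree jump measured by $\gamma_H$, and the Rabinowitz-type global alternative yielding the three cases of Definition \ref{branch}) is a faithful account of the machinery the paper alludes to in Sections \ref{sec:def} and \ref{sec:branches}. No gap to report.
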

Throughout this paper and also in this result, the word {\em branch} should be understood in the sense of Definition \ref{branch}. Thus, by a {\em branch} of solutions of (HS) we mean a connected component $\widehat{\mathcal B}$ of the closure $\widehat\Lambda$ of the set of nontrivial closed orbits of (HS) which, either: {\em (a)} is unbounded, or {\em (b)} goes up to the boundary of $\mathcal U$, or {\em (c)} is compact  and contains at least two trivial closed orbits. Moreover, in this latter case the sum of the bifurcation numbers of the trivial closed orbits in $\widehat{\cB}$ can be shown to vanish, an observation which will not be used in this paper.

\smallskip

We point out that, while the original literature deals with globally-defined Hamiltonians $H:\bR^N\times\bR^N\to\bR$ (thus excluding  possibility {\em (b)}), the more general situation considered in this paper can be dealt with by using the same arguments. 

\smallskip

In the particular case of Hamiltonian systems, Theorem \ref{th:bifurcation} extends the classical statement of Lyapunov's center theorem: it can be checked that if $\lambda i$ with $\lambda>0$ is an algebraically simple characteristic exponent then $\morse(S_T)$ changes when $T$ crosses $2\pi/\lambda$. In fact, this statement keeps its validity if  $\lambda i$ has odd algebraic multiplicity, giving rise to results in the line of Krasnoselskii's celebrated theorem. We shall not use these facts in this paper.

\smallskip

In order to apply Theorem \ref{th:bifurcation} one needs to compute the Morse indexes of the matrices $S_T$. Since these matrices are real and symmetric, their eigenvalues are real and De Gua's corollary of Descartes' rule of signs \cite[Th\'eor\`eme III]{Gua} will be useful. The precise statement of this result is given next: 
\begin{theorem}
	[De Gua \cite{Gua}]\label{DeG}{Let $p(\lambda)=d_k\lambda^k+d_{k-1}\lambda^{k-1}+\ldots+d_1\lambda+d_0$ be a real polynomial without complex nonreal roots. Then the total multiplicity of the positive roots of $p$ coincides with the number of sign changes in the ordered list of coefficients $$d_k,d_{k-1},\ldots,d_1,d_0,$$ where the zero elements that might possibly occur are to be removed. 
	}
\end{theorem}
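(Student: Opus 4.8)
\textbf{Proof proposal (for Theorem~\ref{DeG}).}
The plan is to obtain De Gua's statement as a consequence of Descartes' rule of signs — used for both $p(\lambda)$ and $p(-\lambda)$ — together with an elementary counting inequality, the point being that the hypothesis forces every bound involved to be sharp simultaneously. Throughout, for a real polynomial $q$ let $v(q)$ denote the number of sign changes in its list of coefficients after deletion of the zeros, and for $p$ as in the statement write $P$, $M$ for the total multiplicities of its positive and negative roots and $z\ge 0$ for the multiplicity of $0$ as a root. Since $p$ has no nonreal roots, all $k=\deg p$ of its roots are real, so
\[ P+M+z=k. \]

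First I would recall the part of Descartes' rule that is actually needed, namely $P\le v(p)$. One can cite this, or prove it by induction on $\deg p$: after factoring out $\lambda^z$, Rolle's theorem gives $P-1$ positive roots for $p'$, while $v(p')=v\bigl(p-p(0)\bigr)\le v(p)$ because $p'$ has the same sign pattern as $p$ with its constant term dropped; the induction hypothesis then yields $v(p)\ge P-1$, and comparing the signs of the leading and lowest-order coefficients of $p$ shows $v(p)\equiv P\pmod 2$, whence $v(p)\ge P$. Applying the same inequality to $p(-\lambda)$ — whose coefficient in degree $j$ is $(-1)^j d_j$ and whose positive roots are exactly the negatives of the negative roots of $p$ — gives $M\le v\bigl(p(-\lambda)\bigr)$.

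The next step is the combinatorial inequality
\[ v(p)+v\bigl(p(-\lambda)\bigr)\le k-z. \]
To prove it, let $k=a_0>a_1>\dots>a_m=z$ be the degrees in which $p$ has a nonzero coefficient, so $\sum_{i=0}^{m-1}(a_i-a_{i+1})=k-z$. A consecutive pair $d_{a_i},d_{a_{i+1}}$ contributes $1$ to $v(p)$ exactly when $d_{a_i}d_{a_{i+1}}<0$, and contributes $1$ to $v\bigl(p(-\lambda)\bigr)$ exactly when $(-1)^{a_i-a_{i+1}}d_{a_i}d_{a_{i+1}}<0$; hence if the gap $\delta_i:=a_i-a_{i+1}$ is odd the pair contributes exactly $1$ to the sum $v(p)+v\bigl(p(-\lambda)\bigr)$, and if $\delta_i$ is even it contributes $0$ or $2$. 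Since $\delta_i\ge 1$ in the first case and $\delta_i\ge 2$ in the second, the contribution never exceeds $\delta_i$, and summing over $i$ gives the inequality.

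Finally I would assemble the pieces: the two instances of Descartes' rule give $v(p)+v\bigl(p(-\lambda)\bigr)\ge P+M=k-z$, which together with the combinatorial inequality forces $v(p)+v\bigl(p(-\lambda)\bigr)=P+M$; since also $v(p)\ge P$ and $v\bigl(p(-\lambda)\bigr)\ge M$ separately, both must be equalities, in particular $v(p)=P$. The main obstacle here is not a single deep argument but the bookkeeping: one must track the parity of the gaps $\delta_i$ between successive nonzero coefficients to get the combinatorial inequality, and — if a self-contained treatment is preferred to a citation — establishing Descartes' rule of signs itself, with its parity refinement, is where most of the work lies.
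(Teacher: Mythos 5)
Your proposal is correct. Note first that the paper does not actually prove Theorem~\ref{DeG}: it is stated as a classical result and attributed to De Gua's 1741 memoir \cite{Gua}, so there is no in-paper argument to compare yours against. What you give is a sound, self-contained proof, and it is essentially the classical one: the one-sided bound $P\le v(p)$ with the parity refinement $P\equiv v(p)\pmod 2$ (your Rolle induction for this is fine, including the multiplicity bookkeeping), the same bound applied to $p(-\lambda)$ to control the negative roots, and the gap-parity count showing $v(p)+v\bigl(p(-\lambda)\bigr)\le \deg p-z$, which is squeezed against $P+M=\deg p-z$ coming from the hypothesis that all roots are real. The only (trivial) presentational point is that the statement allows $d_k=0$, so $k$ in your identity $P+M+z=k$ should be read as $\deg p$, which you do implicitly. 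Since the paper leans on this theorem repeatedly (Proposition~\ref{prop0} and Proposition~\ref{propo2}) while only citing it, supplying a two-paragraph proof like yours would be a reasonable self-contained addition.
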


\section{From a second-order equation to a Hamiltonian system}\label{sec:hamiltonian}
From now on our goal will be to prove the results announced in Section \ref{sec:results}. For this reason, and until the end of Section \ref{sec:prf}, we go back now to the general framework and notation of Sections \ref{sec:def}-\ref{sec:results}. We shall start with the following observation: setting $p:=\dot q-\alpha_N q$ and  $u=(p,q)\in\mathbb R^N\times\mathbb R^N$, equation \eqref{soe} can be rewritten as a Hamiltonian system. More precisely, one  gets system (HS) for the Hamiltonian function
\begin{equation}\label{H}
	H(p,q)=\frac{1}{2}|p|^2+\langle p,\alpha_Nq\rangle+W(q),\qquad (p,q)\in\cU:=\bR^N\times\Omega,
\end{equation}  
the {\em amended potential} $W:\Omega\to\mathbb R$ being defined by $$W(q):=V(q)-\frac{1}{2}\langle q,\alpha_N^2 q\rangle,\qquad q\in\Omega,$$
or, more explicitly,
$$\begin{cases}
	W(x,y)=V(x,y)+(x^2+y^2)/2,\qquad\qquad\  (x,y)\in\Omega,\qquad\ &\text{in case }{\bf [2d]},\\
	W(x,y,z)=V(x,y,z)+(x^2+y^2)/2,\qquad\  (x,y,z)\in\Omega,\qquad &\text{in case }{\bf [3d]}.
\end{cases}$$
From now on {\em  it will always be assumed that the Hamiltonian $H$ is given by \eqref{H}}.

\smallskip

The equilibria of (HS) are in a 1:1 correspondence with the equilibria of (\ref{soe}). More specifically, $u_0=(p_0,q_0)$ is a critical point of $H$ if and only if $q_0$ is a critical point of $V$ and $p_0=-\alpha_Nq_0$. In this case, for any $T>0$ we shall say that the pair $(T,u_0)$ is a {\em trivial closed orbit} of (HS).

\smallskip

On the other hand, the map $\Phi(T,\bar q):=(T;\bar q,\bar p)$ defined by $\bar p:=(2\pi/T)(d\bar q/d\theta)-\alpha_N\bar q$ establishes a 1:1 correspondence between the closure $\Lambda$ of the set of nontrivial closed orbits of (\ref{soe}), which we see as a subset of $(0,+\infty)\times C(\mathbb S^1,\Omega)$, and the closure $\widehat\Lambda$ of the set of nontrivial closed orbits of (HS), regarded as a subset of  $(0,+\infty)\times C(\mathbb S^1,\cU)$. 
There is no difficulty in translating Definition \ref{branch} to this context: by a {\em branch} of solutions of (HS) we shall mean a connected component $\widehat{\mathcal B}$ of $\widehat\Lambda$ which, either: (a) is unbounded, or (b) goes up to the boundary of $\mathcal U$, or (c) is compact and contains at least two trivial closed orbits.

\smallskip

There are a number of properties that $q_0$ may have as an equilibrium of \eqref{soe} and are inherited by $u_0$ as an equilibrium of (HS); we collect some of them in Lemma \ref{lem1} below. Assertion {\em (ii)} below should be read in the spirit of the comments  following the statement of Theorem \ref{thm:main2d}. 
\begin{lemma}\label{lem1}{Let $u_0=(p_0,q_0)$ be an equilibrium of (HS). Then, the following hold:
		\begin{enumerate}
			\item[(i)] $u_0$ is isolated as a critical point of $H$.
			\item[(ii)] System \eqref{soe} does not have closed orbits in a sufficiently small neighborhood of $q_0$ if and only if (HS) does not have closed orbits in a sufficiently small neighborhood of $u_0$.
			\item[(iii)] Given $T>0$, there is a branch of closed orbits of \eqref{soe} emanating from $(T,q_0)$ if and only if there is a branch of closed orbits of (HS) emanating from $(T,u_0)$.
\end{enumerate}}\end{lemma}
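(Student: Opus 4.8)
The plan is to prove the three assertions by unwinding the explicit change of variables $u=(p,q)$ with $p=\dot q-\alpha_N q$ that relates \eqref{soe} to (HS), and to verify that this correspondence is compatible with all the relevant topological and degree-theoretic structures. I would organise the argument around the map $\Phi$ introduced just before the statement of the lemma.

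\emph{Part (i).} Here the point is that $u_0=(p_0,q_0)$ being a critical point of $H$ forces $q_0$ to be a critical point of $V$ and $p_0=-\alpha_Nq_0$; conversely each critical point $q_0$ of $V$ determines exactly one critical point $(- \alpha_N q_0, q_0)$ of $H$. Thus $(V')^{-1}(0)\ni q_0\mapsto(-\alpha_Nq_0,q_0)\in(H')^{-1}(0)$ is a homeomorphism (indeed an affine bijection) between the two critical sets. Since assumption {\bf [V]} says the critical points of $V$ are isolated, so are those of $H$, which is exactly \emph{(i)}. This step is routine: differentiate \eqref{H} and solve $H'(p,q)=0$.

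\emph{Parts (ii) and (iii).} The key is that $\Phi$ is a homeomorphism from $(0,+\infty)\times C(\mathbb S^1,\Omega)$ onto $(0,+\infty)\times C(\mathbb S^1,\cU)$ that carries $\Lambda$ onto $\widehat\Lambda$, carries trivial closed orbits of \eqref{soe} to trivial closed orbits of (HS), and respects boundedness in the respective Banach spaces as well as the ``go up to $\partial\Omega$ / $\partial\cU$'' condition. Granting this, a connected component $\cB$ of $\Lambda$ is mapped bijectively and homeomorphically onto a connected component $\widehat\cB=\Phi(\cB)$ of $\widehat\Lambda$, and conditions (a), (b), (c) of Definition \ref{branch} transfer across $\Phi$ one by one. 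In particular $(T,q_0)\in\cB$ iff $(T,u_0)\in\widehat\cB$, giving \emph{(iii)}; and $\cB$ reduces to a point near $(T,q_0)$ for all $T$ iff $\widehat\cB$ does near $(T,u_0)$, giving \emph{(ii)} after noting that ``no closed orbits in a small neighborhood'' is precisely the statement that no nontrivial closed orbit accumulates at the equilibrium, i.e.\ that $(T,q_0)\notin\Lambda$ (resp.\ $(T,u_0)\notin\widehat\Lambda$) for every $T$. For the neighborhood formulation one uses that $\Phi$ and $\Phi^{-1}$ are uniformly continuous on bounded sets together with the a priori bound $p=\dot q-\alpha_Nq$, so small neighborhoods of $q_0$ in $C(\mathbb S^1,\Omega)$ correspond to small neighborhoods of $u_0$ in $C(\mathbb S^1,\cU)$ among closed orbits (where $\dot q$ is automatically controlled by the equation).

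The main obstacle I anticipate is not conceptual but bookkeeping: one must check carefully that $\Phi$ is genuinely a homeomorphism onto its image — in particular that its inverse $(T;\bar p,\bar q)\mapsto(T,\bar q)$ is continuous and that $\Phi$ maps the \emph{closure} of the nontrivial orbits onto the \emph{closure} (not merely the nontrivial orbits onto the nontrivial orbits), which requires knowing that limits of nontrivial closed orbits of \eqref{soe} with $T$ bounded away from $0$ and $\infty$ and $\bar q$ bounded in $C^0$ are automatically bounded in $C^1$ (bootstrap from the equation) so that $\bar p$ converges as well. One also has to match the boundary behaviour: since $\cU=\bR^N\times\Omega$, the set $\partial\cU$ is $\bR^N\times\partial\Omega$ together with points at infinity in the $p$-direction, and one checks that the $p$-component cannot escape to infinity while $T$, $\bar q$, and $\partial\Omega$-distance stay controlled — again a consequence of $p=\dot q-\alpha_Nq$ and the equation. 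Once these continuity and compactness points are nailed down, the transfer of Definition \ref{branch}(a)--(c) across $\Phi$ is immediate.
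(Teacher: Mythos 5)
Your parts \emph{(i)} and \emph{(iii)} are correct and coincide with the paper's argument: \emph{(i)} is the observation that $q\mapsto(-\alpha_Nq,q)$ is an embedding carrying critical points of $V$ onto critical points of $H$, so that {\bf [V]} transfers, and \emph{(iii)} is the routine verification that $\Phi$ carries connected components of $\Lambda$ onto connected components of $\widehat\Lambda$ and that conditions (a)--(c) of Definition \ref{branch} match up. (One slip of formulation: $\Phi$ is not a homeomorphism of $(0,+\infty)\times C(\mathbb S^1,\Omega)$ onto $(0,+\infty)\times C(\mathbb S^1,\cU)$ --- it is only defined on differentiable curves and is far from surjective; the correct statement, which is the one the paper makes and the one you in fact use, is that it is a bijection of $\Lambda$ onto $\widehat\Lambda$.)

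The gap is in part \emph{(ii)}, and it is exactly the step you dismiss as bookkeeping. The nontrivial direction is: if (HS) has no nontrivial closed orbits near $u_0$, then \eqref{soe} has none near $q_0$. For this one must show that a closed orbit of \eqref{soe} whose image lies in a small neighborhood of $q_0$ in $\mathbb R^N$ automatically has small velocity, so that the corresponding $(p,q)=(\dot q-\alpha_Nq,q)$ stays near $u_0=(-\alpha_Nq_0,q_0)$ in $\mathbb R^{2N}$. You assert this (``$\dot q$ is automatically controlled by the equation'', ``a priori bound $p=\dot q-\alpha_Nq$'', ``bootstrap from the equation''), but no bootstrap from the equation alone gives it: \eqref{soe} expresses $\ddot q$ in terms of $\dot q$ and $q$, so a $C^0$ bound on $q-q_0$ yields no pointwise bound on $\dot q$ unless one exploits that the solution is globally defined and bounded (periodic). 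This claim is the entire content of the paper's proof of \emph{(ii)}: it is established by a compactness/blow-up argument --- if $q_n\to q_0$ uniformly on $\mathbb R$ but $|\dot q_n(t_n)|\ge\epsilon$, normalize $t_n=0$; if $\dot q_n(0)$ converges, continuous dependence produces a nonconstant limit solution through $q_0$, contradicting uniform convergence, and if $|\dot q_n(0)|\to\infty$ one rescales by $1/|\dot q_n(0)|$ and argues similarly. (Alternatively, one could invoke a Landau-type interpolation inequality bounding $\|\dot q\|_\infty$ in terms of $\|q-q_0\|_\infty$ and $\|\ddot q\|_\infty$ for bounded functions on $\mathbb R$, and then use the equation to control $\|\ddot q\|_\infty$.) Some such argument must be supplied; as written, the crucial analytic step of \emph{(ii)} is missing.
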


\begin{proof} {\em (i)}: 	The map $q\in\Omega\mapsto(-\alpha_N q,q)$ is an embedding sending $q_0$ into $u_0$, and, both in the planar and the spatial cases, it carries the critical points of $V$ in a neighborhood of $q_0$ into the critical points of $H$ in a neighborhood of $u_0$. Therefore, assumption  {\bf [V]} implies the statement. {\em (ii)}: The nontrivial implication is contained in the following claim: {\em if $q_n:\mathbb R\to\mathbb R^N$ is a sequence of solutions of \eqref{soe} uniformly converging to $q_0$ then $\dot q_n\to 0$ uniformly on $\mathbb R$}. We check this statement by a contradiction argument and assume that  $\{q_n\}$ is as above but there exists some sequence $\{t_n\}\subset\mathbb R$ such that $|\dot q_n(t_n)|\geq\epsilon$ for some $\epsilon>0$ and every $n\in\mathbb N$. Since equation \eqref{soe} is autonomous there is no loss of generality in assuming that $t_n=0$ for every $n$. On the other hand, after possibly passing to a subsequence there is no loss of generality in assuming that either $\dot q_n(0)\to\dot q_0\in\mathbb R^N$ or $|\dot q_n(0)|\to+\infty$ as $n\to+\infty$. In the first case, continuous dependence would imply that $q_n$ converges uniformly on the compact interval $[0,1]$ to the solution of $\ddot q-2\alpha_N\dot q+V'(q)=0,\ q(0)=q_0,\ \dot q(0)=\dot q_0$, which contradicts the fact that $q_n\to q_0$ uniformly on $\mathbb R$. In the second case one can repeat the argument with $r_n:=(1/|\dot q_n(0)|)q_n$, which, for each $n\in\mathbb N$ solves the linear equation $\ddot r-2\alpha_N\dot r+V'(q_n(t))/|\dot q_n(0)|=0$, and passing to the limit as $n\to+\infty$ one arrives similarly to a contradiction. {\em (iii)}: One immediately checks that a set $\mathcal B\subset\Lambda$ is a branch of closed orbits of (\ref{soe}) if and only if $\widehat{\cB}:=\Phi(\cB)$ is a branch of closed orbits of (HS).
\end{proof}

\smallskip

We close this section with a result relating the Brouwer indexes of $H'$ and $V'$ at the isolated equilibria $u_0=(p_0,q_0)$ and $q_0$, respectively. 

\begin{lemma}\label{lem2}
	{Let $u_0=(p_0,q_0)\in\mathbb R^N\times\Omega$ be a critical point of $H$. Then,  $\ib(u_0, H')=\ib(q_0, V')$. Moreover, in the spatial case   this Brouwer index coincides with $\ib(\widetilde q_0,\widetilde V')$.}  
	\begin{proof}
 We consider first the planar case  {\bf [2d]}. Setting $\Pi:=\left(\begin{array}{c|c}I_2&0_2\\ \hline \alpha_2& I_2\end{array}\right)$, which is a $4\times4$ matrix with determinant $1$, we see that
			$$\Pi H'(p,q)
			=(p+\alpha_2 q, V'(q))=\mathcal H_0(p, q)\,,\qquad (p,q)\in\mathbb R^2\times\Omega,$$
			where $\cH_\lambda(p,q):=(p+(1-\lambda)\alpha_2 q, V'(q))$. Now, writting $u_\lambda:=((1-\lambda)p_0, q_0)$ we notice that $\cH_\lambda(u_\lambda)=(0, 0)$ for every $\lambda\in[0,1]$. Moreover, $q_0$ being an isolated critical point of $V$, one easily checks that $u_\lambda$ is an isolated zero of $\cH_\lambda$ and indeed the Brouwer index $\ib(u_\lambda,\cH_\lambda)$ does not depend on $\lambda\in[0,1]$. Thus, $\ib((p_0,q_0),H')=\ib((p_0,q_0),\cH_0)=\ib((0,q_0),\cH_1)$. Since $\cH_1(p,q)=(p,V'(q))$, the multiplicative property of the Brouwer degree gives $\ib((0,q_0),\cH_1)=\ib(q_0,V')$ and thus concludes the proof.
			
			\smallskip
			
			In the spatial case {\bf [3d]} we use the notation $(p,q)\equiv(\widetilde p,\widetilde q,p_3,q_3)\in\mathbb R^4\times\mathbb R^2$ and observe that
		$$H'(p,q)=\left(\tilde H'(\widetilde p,\widetilde q);\ p_3,\frac{\partial V}{\partial z}(q)\right),$$
		where $\tilde H$ is the Hamiltonian corresponding to the planar case for the potential $\widetilde V$. Remembering {\bf [H$_{1-2}$]} and using the result for the planar case we see that
		$$\ib(u_0,H')=\ib((\widetilde p_0,\widetilde q_0),\tilde H')=\ib(\widetilde q_0,\widetilde V')=\ib(q_0,V')\,,$$
		proving the result.
	\end{proof}
\end{lemma}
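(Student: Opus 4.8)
The strategy is to reduce $H'$ to $V'$ by composing (i) an explicit volume‑preserving linear change of coordinates with (ii) homotopies that strip off the momentum variables, and then to invoke the multiplicative and product properties of the Brouwer degree. First I would compute $H'$ explicitly: using $\alpha_N^{\top}=-\alpha_N$ and the symmetry of $\alpha_N^2$ one finds $H'(p,q)=\bigl(p+\alpha_Nq,\ -\alpha_Np+V'(q)-\alpha_N^2q\bigr)$, whence $u_0=(p_0,q_0)$ is a zero of $H'$ precisely when $q_0\in(V')^{-1}(0)$ and $p_0=-\alpha_Nq_0$; such a $u_0$ is isolated by Lemma~\ref{lem1}(i), so $\ib(u_0,H')$ is defined.

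In the planar case \textbf{[2d]}, put $\Pi:=\left(\begin{smallmatrix}I_2&0_2\\ \alpha_2&I_2\end{smallmatrix}\right)$, which has determinant $1$ and satisfies $\Pi H'(p,q)=(p+\alpha_2q,\,V'(q))=:\mathcal H_0(p,q)$ by a one‑line check. Since composition with a fixed linear isomorphism multiplies the Brouwer degree by the sign of its determinant, $\ib(u_0,H')=\ib(u_0,\mathcal H_0)$. Next I would deform $\mathcal H_0$ to the split map $\mathcal H_1(p,q):=(p,V'(q))$ through $\mathcal H_\lambda(p,q):=(p+(1-\lambda)\alpha_2q,\,V'(q))$, after fixing once and for all a ball $B\subset\Omega$ around $q_0$ with $\overline B\cap(V')^{-1}(0)=\{q_0\}$ and a radius $R>|p_0|$: on the boundary of $B_R(0)\times B$ the map $\mathcal H_\lambda$ never vanishes (a zero with $|p|=R$ would force $q=q_0$ and hence $|p|=(1-\lambda)|p_0|<R$, while a zero with $q\in\partial B$ is impossible since then $V'(q)\ne0$), and inside it has the single zero $((1-\lambda)p_0,q_0)$. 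Homotopy invariance of the degree on this fixed domain therefore gives $\ib(u_0,\mathcal H_0)=\ib((0,q_0),\mathcal H_1)$, and since $\mathcal H_1=\mathrm{id}_{\mathbb R^2}\times V'$ the product formula yields $\ib((0,q_0),\mathcal H_1)=\ib(q_0,V')$, which proves the planar case.

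For the spatial case \textbf{[3d]} I would write $(p,q)=(\widetilde p,\widetilde q,p_3,q_3)$ and apply the $6\times6$ analogue of $\Pi$ (acting as $\Pi$ on the $(\widetilde p,\widetilde q)$‑block and as the identity on the $(p_3,q_3)$‑block, still of determinant $1$) to reduce, exactly as in the plane, to the Brouwer index at $u_0$ of the map $\bigl(\widetilde p+\alpha_2\widetilde q,\ \partial_{\widetilde q}V(q),\ p_3,\ \partial_zV(q)\bigr)$. By \textbf{[H$_1$]} and \textbf{[H$_2$]}, Hadamard's lemma factors $\partial_zV(\widetilde q,z)=z\,g(\widetilde q,z)$ with $g$ continuous and $g(\widetilde q_0,0)=\partial_z^2V(q_0)>0$, so $g>0$ near $q_0$. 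Then I would run the homotopy that simultaneously replaces $\alpha_2\widetilde q$ by $(1-\lambda)\alpha_2\widetilde q$ in the first slot, replaces $\partial_{\widetilde q}V(\widetilde q,z)$ by $(1-\lambda)\partial_{\widetilde q}V(\widetilde q,z)+\lambda\widetilde V'(\widetilde q)$ in the second, and replaces $z\,g(\widetilde q,z)$ by $z\bigl((1-\lambda)g(\widetilde q,z)+\lambda\bigr)$ in the last; on a small enough box one has $(1-\lambda)g+\lambda>0$, at $z=0$ the second slot becomes $\widetilde V'(\widetilde q)$, and $\widetilde q_0$ is isolated among critical points of $\widetilde V$ (again by \textbf{[V]} and \textbf{[H$_1$]}), so this homotopy has no zeros on the boundary of the box and ends at the product map $(\widetilde p,\ \widetilde V'(\widetilde q),\ p_3,\ z)$. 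The product formula now gives $\ib(u_0,H')=\ib(\widetilde q_0,\widetilde V')$. Finally, the same factorization $V'(\widetilde q,z)=(\partial_{\widetilde q}V(\widetilde q,z),\,z\,g(\widetilde q,z))$ homotopes $V'$ near $q_0$ to $(\widetilde V'(\widetilde q),\ z)$, giving $\ib(q_0,V')=\ib(\widetilde q_0,\widetilde V')$; combining, $\ib(u_0,H')=\ib(q_0,V')=\ib(\widetilde q_0,\widetilde V')$.

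The only genuinely delicate point is the spatial case: there $H'$ does not split as an exact product of maps in disjoint groups of variables (the $\widetilde q$‑components of $H'$ carry a dependence on $z$), so one really must deform to a split form, and the actual work lies in checking that none of the homotopies above produces spurious zeros on the boundary of the fixed neighborhood — for which the positivity in \textbf{[H$_2$]} controls the $z$‑component and the isolatedness in \textbf{[V]} controls the $\widetilde q$‑component. The gradient computation and the appeals to the multiplicative and product formulas for the Brouwer degree are routine.
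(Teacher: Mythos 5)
Your argument is correct and follows essentially the same route as the paper: the planar case (the matrix $\Pi$, the homotopy $\cH_\lambda$ tracking the zero $((1-\lambda)p_0,q_0)$, and the product formula) is the paper's proof verbatim, and the spatial case is reduced to the planar one exactly as in the paper. Your treatment of the spatial case is in fact more careful than the paper's terse appeal to {\bf [H$_{1-2}$]} — the Hadamard factorization $\partial_z V=z\,g$ with $g>0$ near $q_0$ and the explicit splitting homotopy supply precisely the details the paper leaves implicit.
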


\section{On the spectrum of the linearization} \label{sec:spectrum}

Almost any study of the dynamics of (HS) near the equilibrium $q_0$ should begin with an analysis of the spectrum of the Hamiltonian matrix $J_NH''(u_0)$. Notice that
\begin{equation}
	\label{A}A:=H''(u_0)=\left(\begin{array}{c|c}
		I_N&\alpha_N\\ \hline -\alpha_N&  W''(q_0)\end{array}\right),
\end{equation}
and we shall exploit this fact to reduce the order of certain determinants. The first result of this section, which is valid both in the planar and the spatial cases, shows that in the computation of the characteristic polynomial of $J_NA$ one can replace the $2N\times 2N$ matrix $ J_N A-\lambda I_{2N}$ by the $N\times N$ matrix $V''(q_0)+\lambda^2I_N-2\lambda\alpha_N$.

  \begin{lemma}\label{lem00}
  For any $\lambda \in \mathbb{C}$ the following equality holds:  $$\det(J_NA-\lambda I_{2N})=\det( V''(q_0)+\lambda^2I_N-2\lambda\alpha_N).$$
  \end{lemma}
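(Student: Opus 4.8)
The plan is to evaluate both determinants by elementary block-matrix manipulations, using the explicit form \eqref{A} of $A=H''(u_0)$ and exploiting that the lower-left block of $J_NA-\lambda I_{2N}$ is simply $I_N$. First I would record that, straight from \eqref{A} and the definition of $J_N$,
$$J_NA=\begin{pmatrix}\alpha_N&-W''(q_0)\\ I_N&\alpha_N\end{pmatrix},\qquad\text{so that}\qquad J_NA-\lambda I_{2N}=\begin{pmatrix}\alpha_N-\lambda I_N&-W''(q_0)\\ I_N&\alpha_N-\lambda I_N\end{pmatrix}.$$

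Next I would perform a block row reduction. Left-multiplying by the block-triangular matrix $\left(\begin{smallmatrix}I_N&-(\alpha_N-\lambda I_N)\\ 0_N&I_N\end{smallmatrix}\right)$, which has determinant $1$, annihilates the top-left block and yields $\left(\begin{smallmatrix}0_N&-W''(q_0)-(\alpha_N-\lambda I_N)^2\\ I_N&\alpha_N-\lambda I_N\end{smallmatrix}\right)$. Interchanging its two block rows --- a permutation of sign $(-1)^N$, being the product of the $N$ transpositions $(i,N+i)$ --- leaves a block-upper-triangular matrix, so
$$\det(J_NA-\lambda I_{2N})=(-1)^N\det\bigl(-W''(q_0)-(\alpha_N-\lambda I_N)^2\bigr)=\det\bigl(W''(q_0)+(\alpha_N-\lambda I_N)^2\bigr),$$
where the last equality uses that factoring $-1$ out of an $N\times N$ determinant contributes a further $(-1)^N$, cancelling the first one.

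Finally I would simplify the argument of this determinant. From $W(q)=V(q)-\tfrac12\langle q,\alpha_N^2q\rangle$ together with the symmetry of $\alpha_N^2$ (a consequence of $\alpha_N$ being skew-symmetric) one gets $W''(q_0)=V''(q_0)-\alpha_N^2$; combining this with the expansion $(\alpha_N-\lambda I_N)^2=\alpha_N^2-2\lambda\alpha_N+\lambda^2I_N$, the $\alpha_N^2$ terms cancel and the argument becomes $V''(q_0)+\lambda^2I_N-2\lambda\alpha_N$, which is exactly the claim.

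I do not anticipate a genuine obstacle: the only point requiring care is the bookkeeping of the $(-1)^N$ factors produced by the block-row interchange and by pulling $-1$ out of an $N\times N$ determinant, and these cancel in pairs. An alternative route would be to apply the Schur complement with respect to the lower-right block $\alpha_N-\lambda I_N$ --- legitimate for all $\lambda$ outside the finite set where that block is singular --- and then extend the resulting polynomial identity to every $\lambda\in\mathbb C$ by continuity; the row-reduction argument above has the advantage of avoiding the invertibility issue altogether.
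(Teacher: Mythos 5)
Your proof is correct and follows essentially the same route as the paper's: both reduce $\det(J_NA-\lambda I_{2N})$ by elementary block row operations to $\det\bigl(W''(q_0)+(\alpha_N-\lambda I_N)^2\bigr)$ and then use $W''(q_0)+\alpha_N^2=V''(q_0)$. The only cosmetic difference is that the paper first premultiplies by $J_N$ (of determinant $1$) to place the block $-I_N$ in the top-left corner, whereas you exploit the $I_N$ in the lower-left corner and track the two cancelling factors of $(-1)^N$ from the block-row swap and from pulling out $-1$; both sign bookkeepings are handled correctly.
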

  \begin{proof}
  	It is well-known (and follows from Lemma \ref{lem0}) that $\det J_N=1$. Moreover, one has:
  	\begin{multline*}
  	 J_N( J_NA-\lambda I_{2N})=-A-\lambda J_N=-\left(\begin{array}{c|c}
  	I_N&\alpha_N\\ \hline -\alpha_N& W''(q_0) \end{array}\right)-\lambda \left(\begin{array}{c|c}
  	0_N&-I_N\\ \hline I_N& 0_N\end{array}\right)=\\=\left(\begin{array}{c|c}
  	-I_N&-\alpha_N+\lambda I_N\\ \hline \alpha_N-\lambda I_N& -W''(q_0) \end{array} \right)\,.
  	\end{multline*}
  If, in the last term, one adds to the last $N$ rows the product of $\alpha_N-\lambda I_N$ by the first $N$ rows, one obtains $\renewcommand*{\arraystretch}{1.2}\left(\begin{array}{c|c}
  -I_N&-\alpha_N+\lambda I_N\\ \hline 0_N& -W''(q_0)-(\alpha_N-\lambda I_N)^2\end{array}\right)$; in particular, this latter matrix and $J_NA-\lambda I_{2N}$ have the same  determinant. The result follows after remembering that $W''(q_0)+\alpha_N^2=V''(q_0)$.
\end{proof}

As in Section \ref{sec:results}, we denote by $\beta_1,\beta_2 \in \R$ the eigenvalues of $ V''(q_0)$ in the planar case, or the eigenvalues of $\widetilde V''(\widetilde q_0)$ in the spatial problem; in the latter situation we also set $\beta_3:=\beta_3(q_0)=\frac{\partial^2 V}{\partial z^2}(q_0)>0$. Our next task is to compute explicitly the characteristic polynomial of $J_NA$ in terms of these numbers. 

  \begin{corollary}\label{corol}
  	{In the planar case the characteristic polynomial $\mathfrak p_2(\lambda):=\det( J_2A-\lambda I_{4})$ is given by
\begin{equation}
\label{p2}\mathfrak p_2(\lambda)=\lambda^4+(\beta_1+\beta_2+4)\lambda^2+\beta_1\beta_2\,,
\end{equation}
and in the three-dimensional case  {\bf [3d]} the characteristic polynomial $\mathfrak p_3(\lambda):=\det( J_3A-\lambda I_{6})$ is given by
\begin{equation*}
\mathfrak p_3(\lambda)=(\lambda^2+\beta_3)\,\mathfrak p_2(\lambda)\,,
\end{equation*}
the fourth-order polynomial $\mathfrak p_2(\lambda)$ being defined by (\ref{p2}).}
\end{corollary}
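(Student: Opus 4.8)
The plan is to apply Lemma~\ref{lem00} with $N=2$ and $N=3$ respectively, reducing each $2N\times 2N$ characteristic determinant to an $N\times N$ one, and then to compute these smaller determinants directly using the structure of $\alpha_N$ and the eigenvalues of $V''(q_0)$ (resp. $\widetilde V''(\widetilde q_0)$).

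First I would treat the planar case. By Lemma~\ref{lem00}, $\mathfrak p_2(\lambda)=\det\bigl(V''(q_0)+\lambda^2 I_2-2\lambda\alpha_2\bigr)$. Since the determinant is invariant under orthogonal change of basis and $\alpha_2$ commutes with rotations, I would diagonalize $V''(q_0)$ by an orthogonal matrix $Q$, so that $Q^{\mathsf T}V''(q_0)Q=\mathrm{diag}(\beta_1,\beta_2)$ while $Q^{\mathsf T}\alpha_2 Q=\alpha_2$ (rotations commute with $\alpha_2$; if $Q$ involves a reflection one gets $-\alpha_2$, but this does not affect the final determinant since $\alpha_2$ enters only through $\det$ which is even in $\lambda$ after the computation — alternatively, note every $2\times2$ symmetric matrix is orthogonally diagonalizable by a rotation). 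Then
\begin{equation*}
\mathfrak p_2(\lambda)=\det\begin{pmatrix}\beta_1+\lambda^2 & 2\lambda\\ -2\lambda & \beta_2+\lambda^2\end{pmatrix}=(\beta_1+\lambda^2)(\beta_2+\lambda^2)+4\lambda^2=\lambda^4+(\beta_1+\beta_2+4)\lambda^2+\beta_1\beta_2,
\end{equation*}
which is exactly \eqref{p2}.

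Next I would handle the spatial case. Here $\alpha_3$ is block-diagonal with blocks $\alpha_2$ and $0$, so $V''(q_0)+\lambda^2 I_3-2\lambda\alpha_3$ need not be block-diagonal because $V''(q_0)$ may mix the $z$-coordinate with $(x,y)$. However, assumption {\bf [H$_1$]} forces $\frac{\partial V}{\partial z}(x,y,0)=0$ on $\widetilde\Omega$, and differentiating this identity in $x$ and in $y$ at $q_0$ gives $\frac{\partial^2 V}{\partial x\partial z}(q_0)=\frac{\partial^2 V}{\partial y\partial z}(q_0)=0$; hence $V''(q_0)$ \emph{is} block-diagonal, with the $2\times 2$ block $\widetilde V''(\widetilde q_0)$ and the $1\times1$ block $\beta_3$. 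Since $-2\lambda\alpha_3$ and $\lambda^2 I_3$ respect the same block decomposition, Lemma~\ref{lem00} yields
\begin{equation*}
\mathfrak p_3(\lambda)=\det\bigl(\widetilde V''(\widetilde q_0)+\lambda^2 I_2-2\lambda\alpha_2\bigr)\cdot(\beta_3+\lambda^2)=(\lambda^2+\beta_3)\,\mathfrak p_2(\lambda),
\end{equation*}
the first factor being $\mathfrak p_2(\lambda)$ by the planar computation (with $\beta_1,\beta_2$ now the eigenvalues of $\widetilde V''(\widetilde q_0)$, consistent with the stated convention).

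The only genuine subtlety — and hence the step I would be most careful about — is justifying that $V''(q_0)$ is block-diagonal in the spatial case; this is where {\bf [H$_1$]} is used, and without it the corollary would fail. Everything else is a routine $2\times2$ determinant evaluation. One small bookkeeping point: to be safe about the sign issue in diagonalizing $\alpha_2$, I would either observe that every real symmetric $2\times2$ matrix is diagonalized by a proper rotation, or simply expand $\det(V''(q_0)+\lambda^2 I_2-2\lambda\alpha_2)$ directly in the original coordinates, writing $V''(q_0)=\begin{pmatrix}a&b\\ b&c\end{pmatrix}$ with $a+c=\beta_1+\beta_2$ and $ac-b^2=\beta_1\beta_2$, which gives $(a+\lambda^2)(c+\lambda^2)-b^2+4\lambda^2=\lambda^4+(a+c+4)\lambda^2+(ac-b^2)$ and avoids the change of basis altogether.
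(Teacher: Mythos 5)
Your proposal is correct and follows essentially the same route as the paper: both reduce to $\det\bigl(V''(q_0)+\lambda^2 I_N-2\lambda\alpha_N\bigr)$ via Lemma~\ref{lem00}, diagonalize $V''(q_0)$ by a rotation commuting with $\alpha_2$ in the planar case, and in the spatial case differentiate {\bf [H$_1$]} to obtain the block-diagonal form of the Hessian. Your extra care about the rotation-versus-reflection point (and the alternative direct expansion) is a harmless refinement of the same argument.
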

\begin{proof}
Let us start by considering the planar case {\bf [2d]}. Lemma \ref{lem00} gives
$$\mathfrak p_2(\lambda)=\det\big(V''(q_0)+\lambda^2I_2-2\lambda\alpha_2\big)\,.$$
On the other hand, $V''(q_0)\in\R^{2\times 2}$ is symmetric and therefore, after possibly changing the order of the $\beta_i$'s, there  exists a rotation $\mathscr R\in SO_2(\mathbb R)$ such that $\mathscr R^T V''(q_0)\mathscr R= \begin{pmatrix}
	\beta_1& 0\\
	0&\beta_2
	\end{pmatrix}$. The matrices $\mathscr R$ and $\alpha_2$ commute, and therefore,
	\begin{multline*}
	\mathfrak p_2(\lambda)=\det(\mathscr R^T V''(q_0)\mathscr R+\lambda^2I_2 -2\lambda\alpha_2)=\\=\det\begin{pmatrix}
	\beta_1+\lambda^2&2\lambda\\-2\lambda&\beta_2+\lambda^2
	\end{pmatrix}=\lambda^4+(\beta_1+\beta_2+4)\lambda^2+\beta_1\beta_2\,,
	\end{multline*}
thus showing (\ref{p2}).

\smallskip

In the three-dimensional situation one may differentiate in {\bf [H$_1$]} to find
\begin{equation*}
	\frac{\partial^2 V}{\partial x\partial z}(x,y,0)=0=\frac{\partial^2 V}{\partial y\partial z}(x,y,0),\qquad (x,y)\in\widetilde\Omega\,,
\end{equation*}
so that the Hessian of $V$ at the point $q_0=(\widetilde q_0,0)\in\widetilde\Omega\times\{0\}$ has the form
\[ V''(q_0)=
\left(
\begin{array}{c|c}
	\raisebox{-8pt}{\normalsize\mbox{{$\widetilde V''(q_0)$}}} & 0\\[-1.1ex]  & 0\\\hline\\[-2ex]  0\ \ \ \ \ \  0&\beta_3
\end{array}
\right)\,.
\]

The result follows from Lemma \ref{lem00}.
\end{proof}

 Set  $\mathfrak q(x):=x^2+(\beta_1+\beta_2+4)x+\beta_1\beta_2$, which is a quadratic polynomial with $\mathfrak p_2(\lambda)=\mathfrak q(\lambda^2)$. In this way there is a $1:1$ correspondence between the set of couples $\pm\lambda i\not=0$ of purely-imaginary roots  of $\mathfrak p_2$ and the set of real negative roots of $\mathfrak q$. We further observe that the discriminant of $\mathfrak q$ is given by $\Delta:=(\beta_1+\beta_2+4)^2-4\beta_1\beta_2=8(\beta_1+\beta_2+2+(\beta_1-\beta_2)^2/8)$. It follows that the nonzero, purely-imaginary roots of  $\mathfrak p_2$ are given by $\pm i\, 2\pi/T_-(\beta_1,\beta_2),$ $\pm i\, 2\pi/T_+(\beta_1,\beta_2)$, wherever defined. Here, the functions $T_-:(\bR^2\backslash\bar\cR_0)\cup((\partial\cR_0)\backslash C)\to(0,+\infty)$ and $T_+:\cR_1\cup\cR_3\cup((\partial\cR_0)\backslash C)\to(0,+\infty)$ are as described in \eqref{lpm}. Notice that:
\begin{lemma}\label{lem1234321}{$T_\pm$ are strictly positive in their respective domains. Moreover,	$T_-(\beta_1,\beta_2)\leq T_+(\beta_1,\beta_2)$ on $\cR_1\cup\cR_3\cup((\partial\cR_0)\backslash C)$, and the equality holds if and only if $(\beta_1,\beta_2)\in(\partial\cR_0)\backslash C$.
}
\end{lemma}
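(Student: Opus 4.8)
The plan is to translate the whole statement into elementary facts about the two roots of the quadratic $\mathfrak q(x)=x^2+(s+4)x+\beta_1\beta_2$, where I abbreviate $s:=\beta_1+\beta_2$ and $\delta:=(\beta_1-\beta_2)^2/8\geq 0$. Recall from the discussion above that the discriminant of $\mathfrak q$ is $\Delta=8(s+2+\delta)$, so that when $\Delta\geq 0$ the roots are $x_\pm=\tfrac12\bigl(-(s+4)\pm 2\sqrt 2\sqrt{s+2+\delta}\bigr)$ with $x_-\leq x_+$, their sum being $-(s+4)$ and their product $\beta_1\beta_2$. Comparing with \eqref{lpm} one reads off the algebraic identity $T_\pm(\beta_1,\beta_2)=2\pi/\sqrt{-x_\pm}$; in particular $T_\pm$ is defined and strictly positive at a given point precisely when $\Delta\geq 0$ and $x_\pm<0$. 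Hence it suffices to prove: \textbf{(A)} $x_-<0$ everywhere on the domain of $T_-$, which is $\cR_1\cup\cR_2\cup\cR_3\cup\cR_4\cup(C\backslash\bar\cR_0)\cup((\partial\cR_0)\backslash C)$; and \textbf{(B)} $x_-\leq x_+<0$ everywhere on the domain of $T_+$, namely $\cR_1\cup\cR_3\cup((\partial\cR_0)\backslash C)$, with $x_-=x_+$ occurring exactly on $(\partial\cR_0)\backslash C$.

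For \textbf{(A)} and the inequality $x_+<0$ in \textbf{(B)} I would go region by region. On $\cR_2\cup\cR_4$ the product $\beta_1\beta_2$ is negative, so $\Delta>0$ and $x_-<0<x_+$. On $\cR_1$ the product is positive and $s>0$, so $\Delta>0$, the sum $-(s+4)$ is negative, and both roots are negative. On $\cR_3$ the defining inequality $s>\max(-4,-2-\delta)$ yields simultaneously $s+4>0$ (sum of roots negative) and $s+2+\delta>0$ (so $\Delta>0$), and, $\beta_1\beta_2>0$ being a product of two negative numbers, $x_-<x_+<0$. On $C\backslash\bar\cR_0$ one coordinate vanishes, say $\beta_2=0$, so $\mathfrak q(x)=x\,(x+\beta_1+4)$; a short computation shows that the intersection of $C\backslash\bar\cR_0$ with that axis is exactly $\{\beta_1>-4\}$, whence $x_-=-(\beta_1+4)<0$.

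The genuinely geometric step is the identification $(\partial\cR_0)\backslash C=\{(\beta_1,\beta_2):\Delta=0,\ |\beta_1-\beta_2|<4\}$. Parametrising the curve $\{\Delta=0\}$ by $v:=\beta_1-\beta_2$, one finds $\beta_1=-(v-4)^2/16$ and $\beta_2=-(v+4)^2/16$; thus $\{\Delta=0\}$ lies in the closed third quadrant and meets the axes $C$ only at $(0,-4)$ and $(-4,0)$ (where $v=\pm 4$, i.e.\ $\delta=2$). Since $\cR_0$ is open and $\bar\cR_0$ is contained in the closed third quadrant, using the continuity of $\max(-4,-2-\delta)$ one gets $\partial\cR_0\backslash C=\partial\cR_0\cap(-\infty,0)^2=\{(\beta_1,\beta_2)\in(-\infty,0)^2:s=\max(-4,-2-\delta)\}$; and on this set the alternative $s=-4$, $\delta>2$ is incompatible with $\beta_1,\beta_2<0$, so one must have $-2-\delta>-4$ and $s=-2-\delta$, i.e.\ $\Delta=0$ and $\delta<2$. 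There $s+4=2-\delta>0$, so $\mathfrak q$ has the double negative root $x_-=x_+=-(s+4)/2$, and therefore $T_-=T_+=2\pi\sqrt 2/\sqrt{s+4}>0$.

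With \textbf{(A)} and \textbf{(B)} in hand the rest is immediate: on $\cR_1\cup\cR_3\cup((\partial\cR_0)\backslash C)$ one has $0<-x_+\leq -x_-$, hence $\sqrt{-x_-}\geq\sqrt{-x_+}$ and so $T_-=2\pi/\sqrt{-x_-}\leq 2\pi/\sqrt{-x_+}=T_+$; moreover $T_-=T_+$ is equivalent to $x_-=x_+$, i.e.\ to $\Delta=0$, which in this domain happens exactly on $(\partial\cR_0)\backslash C$ since $\Delta>0$ throughout $\cR_1\cup\cR_3$ (as recorded above). The main obstacle I anticipate is the bookkeeping forced by the piecewise definition of $\cR_0$ via $\max(-4,-2-\delta)$: one must make sure that the portion of $\{\Delta=0\}$ with $\delta>2$ lies in the interior of $\cR_0$ rather than on $\partial\cR_0$, and that the line $s=-4$ contributes no extra boundary pieces inside the third quadrant; passing to the coordinates $(s,v)=(\beta_1+\beta_2,\beta_1-\beta_2)$ makes this transparent but it is the step most prone to sign errors.
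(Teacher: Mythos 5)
Your proof is correct and follows exactly the route the paper intends: the paper states this lemma without proof, relying on the immediately preceding discussion of the quadratic $\mathfrak q(x)=x^2+(\beta_1+\beta_2+4)x+\beta_1\beta_2$ and its discriminant $\Delta=8\big(\beta_1+\beta_2+2+(\beta_1-\beta_2)^2/8\big)$, and your identification $T_\pm=2\pi/\sqrt{-x_\pm}$ together with the region-by-region sign analysis (including the verification that the arc of $\{\Delta=0\}$ with $|\beta_1-\beta_2|\geq 4$ lies inside $\cR_0$ rather than on $\partial\cR_0$) is precisely the elementary verification being omitted.
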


As announced in Section \ref{sec:results}, the functions $T_\pm$ can be used to draw a global picture of the purely-imaginary characteristic exponents at $u_0$. More precisely, one has the following:
\begin{lemma}\label{lem6}{In the planar case the following hold:
		\begin{enumerate} 
			\item[(i)] If $(\beta_1,\beta_2)\in \cR_0$ then $\sigma( J_2A)\cap(i\mathbb R)=\emptyset$.
			\item[(ii)] If $(\beta_1,\beta_2)\in\cR_2\cup \cR_4$ then $\sigma( J_2A)\cap(i\mathbb R)=\left\{-\frac{2\pi}{T_-(\beta_1,\beta_2)} i,\frac{2\pi}{T_-(\beta_1,\beta_2)} i\right\}$. 
			\item[(iii)] If $(\beta_1,\beta_2)\in\cR_1\cup\cR_3$ then $\sigma( J_2A)=\left\{-\frac{2\pi}{T_-(\beta_1,\beta_2)} i,\frac{2\pi}{T_-(\beta_1,\beta_2)} i,-\frac{2\pi}{T_+(\beta_1,\beta_2)} i,\frac{2\pi}{T_+(\beta_1,\beta_2)}\right\}$.
			\item[(iv)] If $(\beta_1,\beta_2)\in(\partial\cR_0)\backslash C$ then $\sigma( J_2A)=\left\{-\frac{2\pi}{T_-(\beta_1,\beta_2)} i,\frac{2\pi}{T_-(\beta_1,\beta_2)} i\right\}=\left\{-\frac{2\pi}{T_+(\beta_1,\beta_2)} i,\frac{2\pi}{T_+(\beta_1,\beta_2)} i\right\}$.
		\item[(v)] If $(\beta_1,\beta_2)\in C\cap(\partial\cR_0)$ then $\sigma( J_2A)\cap(i\mathbb R)=\{0\}$.
	\item[(vi)] If $(\beta_1,\beta_2)\in C\backslash(\partial\cR_0)$ then $\sigma( J_2A)=\left\{0,-\frac{2\pi}{T_-(\beta_1,\beta_2)} i,\frac{2\pi}{T_-(\beta_1,\beta_2)} i\right\}$.\end{enumerate}
	Furthermore, in the spatial situation one obtains $\sigma(J_3A)\cap(i\mathbb R)$ by adding $\pm\sqrt{\beta_3}i$ to the previously-described lists.
}
\begin{proof}There are several of cases to be considered, but the arguments being similar, we shall only study in detail the nondegenerate situation $\beta_1\not=0\not=\beta_2$ in which $\mathfrak p_2$ has exactly one pair $\pm\lambda i$ of purely imaginary roots.  This is equivalent to say that $\mathfrak q$ does not vanish at the origin and has exactly one negative rooot, which will happen if and only if, either $\mathfrak q(0)<0$, or $\mathfrak q(0),\dot{\mathfrak q}(0)>0$ and $\Delta=0$. The first possibility corresponds to case {\em (ii)} and then, the purely-imaginary roots $\pm\lambda i=\pm\frac{2\pi}{T_-(\beta_1,\beta_2)}i$ are simple. The second option corresponds to case {\em (iv)} and then, the purely-imaginary roots $\pm\lambda i=\pm\frac{2\pi}{T_-(\beta_1,\beta_2)}i=\pm\frac{2\pi}{T_+(\beta_1,\beta_2)}i$ have algebraic multiplicity 2.
\end{proof}
\end{lemma}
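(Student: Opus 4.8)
The plan is to reduce everything to the factorization $\mathfrak p_2(\lambda)=\mathfrak q(\lambda^2)$ established just above and to analyze the quadratic $\mathfrak q(x)=x^2+(\beta_1+\beta_2+4)x+\beta_1\beta_2$, whose discriminant is $\Delta=8(\beta_1+\beta_2+2+(\beta_1-\beta_2)^2/8)$. Since the $\pm\lambda i\neq 0$ purely-imaginary roots of $\mathfrak p_2$ correspond bijectively (with matching multiplicity) to the negative real roots of $\mathfrak q$, and $0$ is a root of $\mathfrak p_2$ exactly when $\mathfrak q(0)=\beta_1\beta_2=0$, the entire statement is a case distinction on the signs of $\mathfrak q(0)=\beta_1\beta_2$, of $\dot{\mathfrak q}(0)=\beta_1+\beta_2+4$, and of $\Delta$. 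First I would record, once and for all, that the two negative roots of $\mathfrak q$ (when they exist and are counted with multiplicity) are precisely $-(2\pi/T_\mp(\beta_1,\beta_2))^2$, which is immediate from \eqref{lpm} and the quadratic formula; this identifies the imaginary part of the spectrum with the quantities $2\pi/T_\pm$ in every case.

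Next I would go through the regions one by one. The nondegenerate case $\beta_1\neq 0\neq\beta_2$ is already handled in the excerpt's proof sketch, so I would fill in the remaining branches: in $\cR_0$ one checks $\mathfrak q(0)>0$, $\dot{\mathfrak q}(0)<0$ and $\Delta<0$ (the defining inequality of $\cR_0$ is exactly $\Delta<0$ together with $\beta_1+\beta_2<-4$, equivalently $\dot{\mathfrak q}(0)<0$), so $\mathfrak q$ has no real roots at all, giving (i); in $\cR_1\cup\cR_3$ one has $\mathfrak q(0)>0$, $\dot{\mathfrak q}(0)<0$ and $\Delta>0$, so both roots of $\mathfrak q$ are negative and distinct, yielding (iii) with $T_-<T_+$ via Lemma \ref{lem1234321}; in $\cR_2\cup\cR_4$ one has $\mathfrak q(0)=\beta_1\beta_2<0$, so $\mathfrak q$ has exactly one negative root (and one positive one), giving (ii). The degenerate line $C$ splits according to whether $(\beta_1,\beta_2)\in\partial\cR_0$: on $C\cap\partial\cR_0$ one root of $\mathfrak q$ is $0$ and, because the point also lies on $\partial\cR_0$, the other root is non-negative (indeed $\mathfrak q$ touches but does not cross), so $0$ is the only imaginary exponent, giving (v); on $C\setminus\partial\cR_0$, $0$ is a simple root of $\mathfrak q$ and the second root is strictly negative, equal to $-(2\pi/T_-)^2$, giving (vi). Case (iv), $(\partial\cR_0)\setminus C$, is the locus $\Delta=0$ with $\beta_1\beta_2>0$ and $\beta_1+\beta_2+4>0$, so $\mathfrak q$ has a single negative root of multiplicity two, which is $-(2\pi/T_\pm)^2$ since $T_-=T_+$ there by Lemma \ref{lem1234321}.

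For the spatial statement I would simply invoke Corollary \ref{corol}: $\mathfrak p_3(\lambda)=(\lambda^2+\beta_3)\mathfrak p_2(\lambda)$ and $\beta_3>0$, so $\sigma(J_3A)\cap(i\R)$ is the union of $\sigma(J_2A)\cap(i\R)$ with the roots of $\lambda^2+\beta_3$, namely $\pm\sqrt{\beta_3}\,i$; this is exactly the asserted appendix to each list. The only bookkeeping subtlety is that in cases where $\pm\sqrt{\beta_3}\,i$ happens to coincide with a root already present from $\mathfrak p_2$, the statement is still literally correct as a set equality, so no extra care is needed.

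The main obstacle is not any single computation but the organization of the case analysis so that the algebraic descriptions of the regions $\cR_0,\dots,\cR_4$ and of $C$ and $\partial\cR_0$ given in Section \ref{sec:results} are translated correctly into sign conditions on $\mathfrak q(0)$, $\dot{\mathfrak q}(0)$, $\Delta$; in particular one must verify that the inequality $\beta_1+\beta_2<\max(-4,-2-(\beta_1-\beta_2)^2/8)$ defining $\cR_0$ is equivalent to "$\mathfrak q$ has two negative real roots or a complex-conjugate pair, and in either case $\dot{\mathfrak q}(0)<0$", and dually for $\cR_3$. Once that dictionary is pinned down — it is essentially the content of Lemma \ref{lem1234321} together with elementary facts about quadratics from the Appendix — each of the six planar cases is a one-line check, and the spatial addendum is immediate.
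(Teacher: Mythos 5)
Your overall strategy --- reducing to the quadratic $\mathfrak q(x)=x^2+(\beta_1+\beta_2+4)x+\beta_1\beta_2$ and classifying its roots region by region --- is exactly the paper's (whose proof only sketches one case), and several of your cases are handled correctly. However, the sign dictionary you set up contains errors that break two of the cases as written. For $\cR_1\cup\cR_3$ you assert $\dot{\mathfrak q}(0)<0$ and conclude that ``both roots of $\mathfrak q$ are negative.'' Since $\mathfrak q$ is monic, the sum of its roots is $-\dot{\mathfrak q}(0)=-(\beta_1+\beta_2+4)$, so the premises $\mathfrak q(0)>0$, $\dot{\mathfrak q}(0)<0$, $\Delta>0$ actually force two \emph{positive} roots, i.e.\ no purely imaginary exponents --- the opposite of what (iii) requires. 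The correct condition on $\cR_1\cup\cR_3$ is $\dot{\mathfrak q}(0)=\beta_1+\beta_2+4>0$ (obvious on $\cR_1$, and part of the definition of $\cR_3$), which together with $\mathfrak q(0)>0$ and $\Delta>0$ yields two distinct negative roots.

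Second, your description of $\cR_0$ is wrong: the defining inequality $\beta_1+\beta_2<\max(-4,-2-(\beta_1-\beta_2)^2/8)$ is a \emph{disjunction} ($\dot{\mathfrak q}(0)<0$ \emph{or} $\Delta<0$), not the conjunction you state, and $\cR_0$ contains points with $\Delta>0$ (e.g.\ $(\beta_1,\beta_2)=(-1/10,-10)$) where $\mathfrak q$ has two real roots. There the correct conclusion is that both roots are \emph{positive} (product $\beta_1\beta_2>0$, sum $-(\beta_1+\beta_2+4)>0$), so again no purely imaginary exponents; your claim that ``$\mathfrak q$ has no real roots at all'' on $\cR_0$ fails on that portion, and your closing ``dictionary'' ($\cR_0$ corresponding to two negative real roots or a complex pair) inverts the elliptic and hyperbolic situations, since two negative roots of $\mathfrak q$ means four purely imaginary exponents. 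Cases (ii), (iv), (v), (vi) and the spatial addendum are fine. The repair is pure bookkeeping: in the open third quadrant $\mathfrak q(0)>0$, so when $\Delta\geq 0$ the two real roots share the sign of $-(\beta_1+\beta_2+4)$; $\cR_0$ is precisely where $\mathfrak q$ has no negative root (two positive roots or a complex-conjugate pair), while $\cR_3$ is where it has two negative ones.
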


 \section{Computing Morse indexes via De Gua's corollary of Descartes' rule of signs} \label{sec:Tk}

The purpose of this section is to compute explicitly the Morse index of $S_T$ as a function of the parameter $T>0$ and the eigenvalues of $V''(q_0)$. In the planar situation  this latter plan will result in the following  

 \begin{proposition}\label{prop0}{In the planar case, the following hold for every $T>0$:
 		\begin{enumerate}
 			\item[(i)] If $(\beta_1,\beta_2)\in \cR_0\cup((\partial\cR_0)\cap C)$, then $\morse(S_T)=4$, independently of the value of $T>0$.
 		\item[(ii)] If $(\beta_1,\beta_2)\in(\partial\cR_0)\backslash C$, then $\morse(S_T)=4$ for any $T>0$ with $T\not=T_-(\beta_1,\beta_2)$.
 			\item[(iii)]If $(\beta_1,\beta_2)\in\cR_2\cup \cR_4\cup (C\backslash(\partial\cR_0))$, then $\morse(S_T)=\begin{cases}
 			4,&\text{if }0<T<T_-(\beta_1,\beta_2)\,,\\
 			6,&\text{if }T>T_-(\beta_1,\beta_2).
 		\end{cases}$
 		\item[(iv)]If $(\beta_1,\beta_2)\in  \cR_1$, then $\morse(S_T)=\begin{cases}
 		4,&\text{if }0<T<T_-(\beta_1,\beta_2)\,,\\
 		6,&\text{if }T_-(\beta_1,\beta_2)<T<T_+(\beta_1,\beta_2)\,,\\
 		8,&\text{if }T>T_+(\beta_1,\beta_2)\,.
 	\end{cases}$ 	
 \item[(v)]If $(\beta_1,\beta_2)\in  \cR_3$, then $\morse(S_T)=\begin{cases}
 		 			4,&\text{if }0<T<T_-(\beta_1,\beta_2)\,,\\
 			6,&\text{if }T_-(\beta_1,\beta_2)<T<T_+(\beta_1,\beta_2)\,,\\
 			4,&\text{if }T>T_+(\beta_1,\beta_2)\,.
 		\end{cases}$ 		
 	 		\end{enumerate}	}

 \end{proposition}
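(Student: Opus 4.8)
The plan is to compute $\morse(S_T)$ by applying De Gua's corollary (Theorem~\ref{DeG}) to the characteristic polynomial of the symmetric $8\times 8$ matrix $S_T$. First I would compute this characteristic polynomial explicitly. Since $S_T=\left(\begin{smallmatrix} -\frac{T}{2\pi}A & -J_2\\ J_2 & -\frac{T}{2\pi}A\end{smallmatrix}\right)$ and $A=H''(u_0)$ has the block form \eqref{A}, the natural approach is to use a factorization trick analogous to Lemma~\ref{lem00}: block matrices of the form $\left(\begin{smallmatrix} P & -Q\\ Q & P\end{smallmatrix}\right)$ with $P$ symmetric and $Q$ skew-symmetric satisfy a determinant identity relating $\det(S_T-\mu I_8)$ to the determinant of a smaller matrix with complex entries (essentially $\det((P-\mu I)+iQ)$ type identities, to be justified via the Appendix's linear algebra facts). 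This should reduce the $8\times 8$ determinant first to a $4\times 4$ one, and then, using the rotation $\mathscr R$ diagonalizing $V''(q_0)$ as in Corollary~\ref{corol}, to an explicit polynomial in $\mu$ whose coefficients are polynomials in $T$, $\beta_1$, $\beta_2$.

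Once the characteristic polynomial $\chi_T(\mu)=\det(S_T-\mu I_8)$ is in hand — I expect it to factor in a way governed by $\mathfrak p_2(\lambda)=\mathfrak q(\lambda^2)$ and the period functions $T_\pm$, since the kernel of $S_T$ detects exactly the resonances $2\pi/T\in\{2\pi/T_\pm\}$ — the next step is to read off the Morse index. By Theorem~\ref{DeG}, $\morse(S_T)$ equals the number of sign changes in the ordered coefficient list of $\chi_T(-\mu)$ (equivalently, count negative roots of $\chi_T$). I would organize the computation by the sign pattern of the relevant discriminant-type quantities: the leading behavior gives the constant contribution (explaining why $\morse(S_T)=4$ generically and always even, consistent with Corollary~\ref{cor221}), and the transitions at $T=T_\pm(\beta_1,\beta_2)$ account for the jumps by $2$. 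Case (i) (hyperbolic, $(\beta_1,\beta_2)\in\cR_0$, or the corner points of $C\cap\partial\cR_0$) should be the case where no real negative root crossing occurs and $\morse(S_T)\equiv 4$; cases (iii)--(v) correspond to one or two crossings with the sign of $\beta_1\beta_2=\ib(q_0,V')$ controlling whether a crossing adds or subtracts; case (ii) is the doubly-resonant boundary where both crossings coincide at $T_-=T_+$ and cancel.

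The main obstacle I anticipate is the bookkeeping in the degenerate strata: when $(\beta_1,\beta_2)\in C$ (so $\beta_1\beta_2=0$) or on $(\partial\cR_0)\backslash C$ (where $T_-=T_+$ and $\mathfrak q$ has a double negative root), the polynomial $\chi_T$ acquires a zero coefficient or a repeated factor, and De Gua's rule requires removing zero coefficients before counting sign changes — one must check carefully that the count still gives the claimed value, and that at the resonant $T$ itself (where $S_T$ is singular) the Morse index is the common value of the one-sided limits rather than something in between. Handling the case $(\beta_1,\beta_2)\in C\backslash(\partial\cR_0)$ in (iii) alongside the nondegenerate $\cR_2\cup\cR_4$ cases in a uniform way will need a small separate argument, presumably a perturbation/continuity remark or a direct substitution $\beta_i=0$ into the coefficient list. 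The rest — verifying $T_\pm$ are the only crossing values, and that each crossing changes $\morse(S_T)$ by exactly $\pm 2$ — follows from the structure of $\mathfrak q$ and the evenness of the Morse index, together with the already-established Lemmas~\ref{lem1234321}--\ref{lem6}.
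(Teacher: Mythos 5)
Your first half coincides with the paper's: the characteristic polynomial of $S_T$ is obtained exactly as you describe, via the square factorization $\det(S_T-\lambda I_8)=p_T(\lambda)^2$ of Corollary \ref{cor221}, the block-determinant reduction of Lemma \ref{lem0}, and the rotation diagonalizing $V''(q_0)$; this is precisely Lemma \ref{lem4}. Where you diverge is in how the Morse index is then extracted. You propose to run De Gua's rule stratum by stratum, pinning down the sign pattern of all the coefficients $d_4,\dots,d_0$ as functions of $T$ in every region of the $(\beta_1,\beta_2)$-plane, with extra care in the degenerate strata. That would work in principle, but it is far heavier than the paper's route and you underestimate the cost: the delicate part is not only the degenerate strata but already the generic ones, where one must locate the sign changes of $d_1$ and $d_2$ relative to $T_\pm$. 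The paper sidesteps all of this with a connectedness argument: the Morse index of a nonsingular symmetric matrix is locally constant, and by Corollary \ref{corolnew} together with Lemmas \ref{lem1234321}--\ref{lem6} the set of nonsingular matrices $S_T(\beta_1,\beta_2)$ splits into exactly three connected components $\Gamma_1,\Gamma_2,\Gamma_3$; hence it suffices to apply Theorem \ref{DeG} at a single sample point of each (the paper takes $(\beta_1,\beta_2)=(1,1)$ with $T$ small, intermediate, and large). In particular the points of $C$ and of $(\partial\cR_0)\backslash C$ lie in the same components as nearby generic parameters, so no separate zero-coefficient or perturbation analysis is needed. Finally, one concrete error in your organizing heuristic: the assertion that ``the sign of $\beta_1\beta_2=\ib(q_0,V')$ controls whether a crossing adds or subtracts'' is false. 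Both $\cR_1$ and $\cR_3$ have $\beta_1\beta_2>0$, yet at $T=T_+(\beta_1,\beta_2)$ the index jumps from $6$ to $8$ in case (iv) and drops from $6$ to $4$ in case (v); the Brouwer index enters only later, in the definition \eqref{bb3} of the bifurcation number, not in the direction of the Morse-index jump. If you execute your plan guided by that heuristic you will get case (v) wrong, so the jump directions must come out of the actual coefficient (or connectedness) computation.
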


In order to prove Proposition \ref{prop0} we need to apply Theorem \ref{DeG} on the explicit expression of the characteristic polynomial of $S_T$. We obtain this polynomial below with some help from Lemma \ref{lem0}  and Corollary \ref{cor221} in the Appendix.

\begin{lemma}\label{lem4}
	{In the planar situation, the following equality holds true for every $T>0$: $$\det(S_T-\lambda I_8)=(d_4\lambda^4+d_3\lambda^3+d_2\lambda^2+d_1\lambda+d_0)^2,$$
			where $$\begin{cases}
				d_4:=1,\\ d_3:=c_1(\beta_1,\beta_2)\frac{T}{2\pi},\\  d_2:=\big(c_0(\beta_1,\beta_2)+3c_1(\beta_1,\beta_2)-8\big)\frac{T^2}{4\pi^2}-2,\\  d_1:=\big(c_0(\beta_1,\beta_2)+c_1(\beta_1,\beta_2)-4\big)\frac{T^3}{4\pi^3}-c_1(\beta_1,\beta_2)\frac{T}{2\pi},\\   d_0:=c_0(\beta_1,\beta_2)\frac{T^4}{16\pi^4}-c_1(\beta_1,\beta_2)\frac{T^2}{4\pi^2}+1,
			\end{cases}$$
and $c_0(\beta_1,\beta_2):=\beta_1\beta_2$;\ \
$c_1(\beta_1,\beta_2):=\beta_1+\beta_2+4$.

	}

  \end{lemma}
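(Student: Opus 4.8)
The plan is to compute $\det(S_T-\lambda I_8)$ directly from the block structure of $S_T$. Recall from \eqref{tkla} that $S_T=\left(\begin{smallmatrix} -\frac{T}{2\pi}A & -J_2\\ J_2 & -\frac{T}{2\pi}A\end{smallmatrix}\right)$, so writing $t:=T/2\pi$ for brevity, $S_T-\lambda I_8 = \left(\begin{smallmatrix} -tA-\lambda I_4 & -J_2\\ J_2 & -tA-\lambda I_4\end{smallmatrix}\right)$. First I would note that the two diagonal blocks coincide and the off-diagonal blocks are $\pm J_2$; since $J_2$ commutes with $-tA-\lambda I_4$ up to the relation $J_2^2=-I_4$ one can hope to factor the $8\times 8$ determinant as a product of two $4\times 4$ determinants. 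Concretely, I expect the identity $\det\left(\begin{smallmatrix} M & -J\\ J & M\end{smallmatrix}\right) = \det(M - iJ)\det(M+iJ) = |\det(M+iJ)|^2$ whenever $M$ and $J$ commute — but here $A$ and $J_2$ do \emph{not} commute, so this shortcut is not immediate. This is exactly the kind of elementary linear-algebra fact that the Appendix (Lemma \ref{lem0} and Corollary \ref{cor221}) is presumably designed to supply; I would invoke it to reduce $\det(S_T-\lambda I_8)$ to the square of the determinant of a $4\times 4$ matrix, say $\det(-tA-\lambda I_4 + iJ_2)$ or a real $4\times 4$ surrogate of it.

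Next I would evaluate that $4\times 4$ determinant explicitly. Using \eqref{A}, $A = \left(\begin{smallmatrix} I_2 & \alpha_2\\ -\alpha_2 & W''(q_0)\end{smallmatrix}\right)$, and one can diagonalize $W''(q_0)$ (equivalently $V''(q_0)$) by a rotation $\mathscr R\in SO_2(\mathbb R)$ that commutes with $\alpha_2$, exactly as in the proof of Corollary \ref{corol}; this replaces $V''(q_0)$ by $\mathrm{diag}(\beta_1,\beta_2)$ without changing any of the determinants involved. After this reduction the $4\times 4$ determinant becomes a polynomial in $\lambda$ and $t$ whose coefficients are polynomials in $\beta_1,\beta_2$; performing a block-determinant reduction (adding multiples of the first two rows to the last two, as in the proof of Lemma \ref{lem00}) collapses it to a $2\times 2$ determinant and then to the explicit quartic $d_4\lambda^4+d_3\lambda^3+d_2\lambda^2+d_1\lambda+d_0$. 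At this stage it is simply a matter of bookkeeping to check that the coefficients are the ones stated, using $c_0=\beta_1\beta_2$, $c_1=\beta_1+\beta_2+4$; as a consistency check, setting $t\to 0$ should give $(\lambda^4-2\lambda^2+1)=(\lambda^2-1)^2$, i.e. $\det(S_0-\lambda I_8)$ with $S_0=\left(\begin{smallmatrix}0&-J_2\\ J_2&0\end{smallmatrix}\right)$, which indeed has eigenvalues $\pm 1$ each of multiplicity $4$ — matching $d_4=1$, $d_3=0$, $d_2=-2$, $d_1=0$, $d_0=1$ at $t=0$, and in particular $\morse(S_0)=4$ as claimed in Proposition \ref{prop0}.

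The main obstacle I anticipate is the very first step: justifying that $\det(S_T-\lambda I_8)$ is a perfect square of a $4\times 4$ determinant despite the noncommutativity of $A$ and $J_2$. One clean route is to conjugate by the unitary $U=\frac{1}{\sqrt2}\left(\begin{smallmatrix} I_4 & iI_4\\ iI_4 & I_4\end{smallmatrix}\right)$, which block-diagonalizes $S_T-\lambda I_8$ into $\mathrm{diag}(-tA-\lambda I_4 - iJ_2,\ -tA-\lambda I_4 + iJ_2)$ regardless of commutativity — the off-diagonal blocks $\pm J_2$ are scalar multiples of a \emph{fixed} matrix, so the computation $U^{-1}\left(\begin{smallmatrix}M&-J\\ J&M\end{smallmatrix}\right)U$ works verbatim. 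Then $\det(S_T-\lambda I_8)=\det(M-iJ)\det(M+iJ)$ with $M=-tA-\lambda I_4$, and since $M$ is real this equals $\det(M+iJ)\overline{\det(M+iJ)}=|\det(M+iJ)|^2$; one must still argue that $\det(M+iJ)$ is a \emph{real} polynomial in $\lambda,t,\beta_1,\beta_2$ — which follows because $\overline{\det(M+iJ_2)}=\det(M-iJ_2)=\det(M-i(-J_2^T)) = \det(M+iJ_2)$ using $J_2^T=-J_2$ and invariance of determinant under transposition — so that the square is genuinely of a real quartic. Granting Lemma \ref{lem0} and Corollary \ref{cor221} this reality/squareness is likely packaged ready-to-use, and the remainder is routine expansion; so the only real content is choosing the right reduction and then carefully tracking the coefficients.
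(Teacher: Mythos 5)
Your proposal is correct and follows essentially the same route as the paper: Corollary \ref{cor221} (built on the row/column manipulation you describe via the unitary conjugation, together with the Hermiticity/reality observation) reduces $\det(S_T-\lambda I_8)$ to $p_T(\lambda)^2$ with $p_T(\lambda)=\det\bigl(-\tfrac{T}{2\pi}A+iJ_2-\lambda I_4\bigr)$, after which the paper likewise applies Lemma \ref{lem0} to collapse to a $2\times2$ determinant and diagonalizes $V''(q_0)$ by a rotation commuting with $\alpha_2$ before expanding. Your $t\to 0$ consistency check is a nice addition but the argument is otherwise the paper's own.
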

	\begin{proof}
Corollary \ref{cor221} gives 
		$$\det(S_T-\lambda I_8)=p_{T}(\lambda)^2\,,$$
		where 
 \begin{equation*}
 		\renewcommand*{\arraystretch}{1.4} 
    p_T(\lambda)=\det\left(-\frac{T}{2\pi}A+i J_2-\lambda I_4\right)=\det\left(\begin{array}{c|c}-(\frac{T}{2\pi}+\lambda)I_2&-\frac{T}{2\pi}\alpha_2-iI_2\\ \hline\frac{T}{2\pi}\alpha_2+iI_2&-\frac{T}{2\pi}W''(q_0)-\lambda I_2
		\end{array}\right).
\end{equation*}    
  Remembering that $W''(q_0)=V''(q_0)+I_2$ and using Lemma \ref{lem0} we see that
		\begin{equation*}
	p_T(\lambda)=\det\left(\left(\lambda^2+\frac{T}{\pi}\lambda -1\right)I_2+\left(\frac{T}{2\pi}\lambda+\frac{T^2}{4\pi^2}\right) V''(q_0)+i\,\frac{T}{\pi}\alpha_2\right)\,.
		\end{equation*}

The Hessian matrix $V''(q_0)$ is symmetric, and we deduce that, after possibly changing the order of the $\beta_i$'s there exists a rotation $\mathscr R\in SO(2)$ such that $\mathscr R^T V''(q_0)\mathscr R=\begin{pmatrix}
\beta_1&0 \\ 0&\beta_2
\end{pmatrix}$. The matrices $\alpha_2$ and $\mathscr R$ commute, and therefore
\begin{multline*}
p_T(\lambda)=\det\left(\left(\lambda^2+\frac{T}{\pi}\lambda -1\right)I_2+\left(\frac{T}{2\pi}\lambda+\frac{T^2}{4\pi^2}\right) \mathscr R^T V''(q_0)\mathscr R+i\,\frac{T}{\pi}\alpha_2\right)=\\=\det\begin{pmatrix}
\lambda^2+\frac{(2+\beta_1)T}{2\pi}\lambda +\frac{\beta_1T^2}{4\pi^2}-1& -\frac{T}{\pi} i\smallskip\\ \frac{T}{\pi} i&\lambda^2+\frac{(2+\beta_2)T}{2\pi}\lambda +\frac{\beta_2T^2}{4\pi^2}-1
\end{pmatrix}\,,
\end{multline*}
implying the result.
	\end{proof}

Given $T>0$ such that $\det(S_T)\not=0$, the negative Morse index $\morse(S_T)$ equals the order $4N=8$ minus the total algebraic multiplicity of the positive roots of the characteristic polynomial of $S_T$. By combining Lemma \ref{lem4} with Theorem \ref{DeG} we see that $\morse(S_T)$ equals $8$ minus twice the number of sign changes in the ordered sequence $\{d_4,d_3,d_2,d_1,d_0\}$. In particular, it depends only on $T$ and the (unordered) eigenvalues $\beta_1,\beta_2$. For this reason, in the following we shall assume, without loss of generality, that $V''(q_0)=\begin{pmatrix}
\beta_1&0\\ 0&\beta_2
\end{pmatrix}$ is diagonal, and write $S_T(\beta_1,\beta_2)$ instead of $S_T$.

 \smallskip

\begin{proof}
	[Proof of Proposition \ref{prop0}] In view of  Corollary \ref{corolnew} and Lemmas \ref{lem1234321}-\ref{lem6}, the set
$$\Gamma=\left\{S_T(\beta_1,\beta_2):(\beta_1,\beta_2)\in\bR^2,\ T>0,\ \det(S_T(\beta_1,\beta_2))\not=0\right\}\subset\mathbb R^{8\times 8}$$
can be written as the union of three connected components $\Gamma=\Gamma_1\cup\Gamma_2\cup\Gamma_3$. For instance, $\Gamma_1$ may be taken as the set of matrices $S_T(\beta_1,\beta_2)$ such that, either: $(\beta_1,\beta_2)\in\cR_0\cup(C\cap(\partial\cR_0))$ and $T>0$, or $(\beta_1,\beta_2)\in(\partial\cR_0)\backslash C$ and $T\not=T_+(\beta_1,\beta_2)$, or $(\beta_1,\beta_2)\in\cR_3$ and $T>T_+(\beta_1,\beta_2)$, or $(\beta_1,\beta_2)\in\bR^2\backslash\bar\cR_0$ and $T<T_-(\beta_1,\beta_2)$. Similarly, one may take  
$$\begin{cases}
	\Gamma_2=\left\{S_T(\beta_1,\beta_2):(\beta_1,\beta_2)\in\cR_1,\ T>T_+(\beta_1,\beta_2)\right\},\\
	\Gamma_3=\left\{S_T(\beta_1,\beta_2):(\beta_1,\beta_2)\in\bR^2\backslash\bar\cR_0,\ T_-(\beta_1,\beta_2)<T<T_+(\beta_1,\beta_2)\right\},	
\end{cases}$$
where $T_+(\beta_1,\beta_2):=+\infty$ if $(\beta_1,\beta_2)\in\cR_2\cup\cR_4\cup (C\backslash(\partial\cR_0))$.
Moreover, Lemmas \ref{lem1234321}-\ref{lem6} imply that $\Gamma$ does not contain singular matrices, and we deduce that the Morse index has a  constant value on each $\Gamma_i$. All what remains to do is to pick some matrix in each set and to compute its Morse index.

\medbreak

In this task, the computation of the characteristic polynomial of $S_T(\beta_1,\beta_2)$ carried out in Lemma \ref{lem4}  will be useful. We start with the choice $(\beta_1,\beta_2):=(1,1)\in\cR_1$ Notice that $c_0(1,1)=1$ and $c_1(1,1)=6$, and so
$\det(S_T(1,1)-\lambda I_8)=(d_4\lambda^4+d_3\lambda^3+d_2\lambda^2+d_1\lambda+d_0)^2\,,$
the coefficients $d_i$ being given by
$$d_4=1,\quad d_3=\frac{3T}{\pi},\quad d_2=\frac{11T^2}{4\pi^2}-2,\quad d_1=\frac{3T^3}{4\pi^3}-\frac{3T}{\pi},\quad d_0=\frac{T^4}{16\pi^4}-\frac{3T^2}{2\pi^2}+1\,.$$
 For $T>0$ small enough, $d_4,d_3,d_0>0$ while $d_2,d_1<0$, so that there are two sign changes in the ordered list $d_4,d_3,d_2,d_1,d_0$ and (by Theorem \ref{DeG}) we deduce that $\morse(S_T(\beta_1,\beta_2))=8-4=4$ if $S_T(\beta_1,\beta_2)\in \Gamma_1$. For $T>0$ big enough, all these coefficients are positive, there are no sign changes in the ordered list $d_4,d_3,d_2,d_1,d_0$ and therefore the positive Morse index of $S_T(1,1)$ is zero. Consequently, $\morse(S_T(\beta_1,\beta_2))=8-0=8$ if $S_T(\beta_1,\beta_2)\in \Gamma_2$. Similarly, one checks that the Morse index of $S_T(\beta_1,\beta_2)$ is $6$ on $\Gamma_3$, thus concluding the proof.
\end{proof}
We are now ready to study the Morse index of $S_T$ in the spatial case. In this situation $S_T$ is a $12\times 12$ symmetric matrix  depending on the parameter $T>0$.  By combining Corollaries \ref{corol} and \ref{corolnew} we see that $\det S_T=0$ if and only if either $T=\frac{2\pi}{\sqrt{\beta_3}}$  or $\det\tilde S_T=0$. Here,
	$$\renewcommand*{\arraystretch}{1.4}\tilde S_T=\left(\begin{array}{c|c}
	-\frac{T}{2\pi}\tilde A&- J_2\\ \hline J_2&-\frac{T}{2\pi}\tilde A
\end{array}\right)\in\mathbb R^{8\times 8},\qquad \tilde A=\left(\begin{array}{c|c}
I_2&\alpha_2\\ \hline -\alpha_2&\widetilde  W''(\widetilde q_0)\end{array}\right)\in\mathbb R^{4\times 4},$$ 
and $\widetilde  W''(\widetilde q_0)=\widetilde V''(\widetilde q_0)+I_2\in\bR^{2\times 2}$. On the other hand, the Morse index $m^-(\tilde S_T)$ was already computed in Proposition \ref{prop0} as a function of $T>0$ and the eigenvalues $\beta_1,\beta_2$ of $\widetilde V''(q_0)$. Notice that 
 \begin{proposition}\label{propo2}
 	{Let $T>0$ be such that $\det(\tilde S_T)\not=0$. Then,
 	$$m^-(S_T)=\begin{cases}
 		m^-(\tilde S_T)+2&\text{ if }0<T<\frac{2\pi}{\sqrt{\beta_3}},\\
 		m^-(\tilde S_T)+4&\text{ if }T>\frac{2\pi}{\sqrt{\beta_3}}.	
 	\end{cases}$$
 }
\begin{proof}
Corollary \ref{cor221} states that $\det(S_T-\lambda I_{12})=p_T(\lambda)^2$, where,  	

	\begin{equation*}
			\renewcommand*{\arraystretch}{1.4}
		\begin{split}
			&p_T(\lambda):=\det\left(-\frac{T}{2\pi}A+i J_3-\lambda I_{6}\right)=\\ &\det\left(\begin{array}{c|c|c|c}
				-(\frac{T}{2\pi}+\lambda)I_2 &0_{2\times 1} &-\frac{T}{2\pi}\alpha_2 + iI_2 &0_{2\times 1}\\ 
				\hline
				0_{1\times 2}  &-\frac{T}{2\pi}-\lambda   &0_{1\times 2}  &i\\
				\hline
				\frac{T}{2\pi}\,\alpha_2-iI_2 &0_{2\times 1}&-\frac{T}{2\pi}\widetilde W''(q_0)-\lambda I_2  &0_{2\times 1}\\
				\hline
				0_{1\times 2}   &-i &0_{1\times 2}   &-\frac{\beta_3T}{2\pi}-\lambda
			\end{array}\right).
		\end{split}
	\end{equation*}
	Here we have used the form (\ref{A}) of the matrix $A$. Rearranging rows and columns we see that  
	\begin{equation*}
			\renewcommand*{\arraystretch}{1.4}
		p_T(\lambda)=\det\left(\begin{array}{c|c|c|c}
			-(\frac{T}{2\pi}+\lambda)I_2 &-\frac{T}{2\pi}\alpha_2+ iI_2  &0_{2\times 1}&0_{2\times 1}\\
			\hline
			\frac{T}{2\pi}\,\alpha_2-iI_2   &-\frac{T}{2\pi}\widetilde W''(q_0)-\lambda I_2    &0_{2\times 1}  &0_{2\times 1}\\
			\hline
			0_{1\times 2}&0_{1\times 2}&-\frac{T}{2\pi}-\lambda& i\\
			\hline
			0_{1\times 2}   &0_{1\times 2}  &-i &-\frac{\beta_3T}{2\pi}-\lambda
		\end{array}\right).
\end{equation*}
The four blocks in the upper-left corner constitute the matrix $-\frac{T}{2\pi}\tilde A+iJ_2-\lambda I_4$, whose determinant will be denoted by $\widetilde p_T(\lambda)$.   On the other hand, the determinant of the $2\times 2$ matrix in the lower-right corner is $\lambda^2+\frac{(\beta_3+1)T}{2\pi}\,\lambda+\frac{\beta_3T^2-4\pi^2}{4\pi^2}$. It follows that
$$p_T(\lambda)=\widetilde p_T(\lambda)\left(\lambda^2+\frac{(\beta_3+1)T}{2\pi}\,\lambda+\frac{\beta_3T^2-4\pi^2}{4\pi^2}\right).$$

	On the other hand, Corollary \ref{cor221} states that $\det(\widetilde S_T-\lambda I_8)=\widetilde p_T(\lambda)^2$, leading us to the following connection between the characteristic polynomials of $S_T$ and $\widetilde S_T$:
	$$\det(S_T-\lambda I_{12})=\det(\widetilde S_T-\lambda I_8)\left(\lambda^2+\frac{(\beta_3+1)T}{2\pi}\,\lambda+\frac{\beta_3T^2-4\pi^2}{4\pi^2}\right)^2.$$
	In particular, the Morse index of $S_T$ equals the Morse index of $\tilde S_T$ plus twice the number of negative roots of the quadratic polynomial $\lambda^2+\frac{(\beta_3+1)T}{2\pi}\,\lambda+\frac{\beta_3T^2-4\pi^2}{4\pi^2}$. This latter number can be either computed directly or using Theorem \ref{DeG}, and the result follows.	
\end{proof}

 \end{proposition}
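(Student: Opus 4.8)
The plan is to use assumptions {\bf [H$_1$]}--{\bf [H$_2$]} to decouple the ``vertical'' direction from the linearization, thereby reducing the statement to a computation with a single $4\times 4$ matrix.

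First I would record, exactly as in the proof of Corollary \ref{corol}, that differentiating {\bf [H$_1$]} forces $\partial^2 V/\partial x\partial z=\partial^2 V/\partial y\partial z=0$ at $q_0=(\widetilde q_0,0)$, so that $V''(q_0)=\widetilde V''(\widetilde q_0)\oplus(\beta_3)$ and hence $W''(q_0)=\widetilde W''(\widetilde q_0)\oplus(\beta_3)$. Since moreover $\alpha_3$ has its third row and column equal to zero, the matrix $A=H''(u_0)$ of \eqref{A} and the symplectic matrix $J_3$ both respect the splitting of phase space into the coordinates $(p_1,p_2,q_1,q_2)$ and $(p_3,q_3)$: reordering the coordinates in this way carries $A$ into $\widetilde A\oplus A_3$, with $A_3=\mathrm{diag}(1,\beta_3)\in\bR^{2\times 2}$, and $J_3$ into $J_2\oplus J_1$, where $J_1=\left(\begin{smallmatrix}0&-1\\1&0\end{smallmatrix}\right)$. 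Reorganizing accordingly the twelve coordinates on which $S_T$ acts --- a permutation of coordinates, hence an orthogonal similarity --- brings $S_T$ into block-diagonal form $\widetilde S_T\oplus S_T^{(3)}$, where
$$S_T^{(3)}:=\left(\begin{array}{c|c}-\frac{T}{2\pi}A_3&-J_1\\ \hline J_1&-\frac{T}{2\pi}A_3\end{array}\right)\in\bR^{4\times 4}.$$
Since similar matrices have the same spectrum, $m^-(S_T)=m^-(\widetilde S_T)+m^-\bigl(S_T^{(3)}\bigr)$, so the whole problem reduces to evaluating $m^-\bigl(S_T^{(3)}\bigr)$.

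For that I would apply Corollary \ref{cor221} to the $2\times 2$ symmetric matrix $A_3$, getting $\det\bigl(S_T^{(3)}-\lambda I_4\bigr)=q_T(\lambda)^2$ with
$$q_T(\lambda)=\det\!\left(-\frac{T}{2\pi}A_3+iJ_1-\lambda I_2\right)=\lambda^2+\frac{(1+\beta_3)T}{2\pi}\,\lambda+\frac{\beta_3 T^2-4\pi^2}{4\pi^2}.$$
Because this characteristic polynomial is a perfect square, each root of $q_T$ is an eigenvalue of $S_T^{(3)}$ of doubled multiplicity, so $m^-\bigl(S_T^{(3)}\bigr)$ equals twice the number, with multiplicity, of negative roots of $q_T$. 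Now the discriminant of $q_T$ equals $\frac{(1-\beta_3)^2 T^2}{4\pi^2}+4>0$, so $q_T$ has two real roots; their sum $-\frac{(1+\beta_3)T}{2\pi}$ is negative, while their product $\frac{\beta_3 T^2-4\pi^2}{4\pi^2}$ is negative exactly when $0<T<\frac{2\pi}{\sqrt{\beta_3}}$ and positive exactly when $T>\frac{2\pi}{\sqrt{\beta_3}}$ (the value $T=\frac{2\pi}{\sqrt{\beta_3}}$, at which $q_T(0)=0$ and hence $S_T^{(3)}$ and $S_T$ are singular, falls outside the scope of the statement). In the first regime the roots have opposite signs, so $m^-\bigl(S_T^{(3)}\bigr)=2$; in the second both are negative, so $m^-\bigl(S_T^{(3)}\bigr)=4$. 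Substituting into $m^-(S_T)=m^-(\widetilde S_T)+m^-\bigl(S_T^{(3)}\bigr)$ gives the two cases of the proposition.

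The only mildly delicate part is the bookkeeping in the reduction step: one must check carefully that the coordinate permutation really carries the block-diagonality of $A$ and $J_3$ over to block-diagonality of the $12\times 12$ matrix $S_T$ --- that is, keeping the ``sine'' and ``cosine'' copies of $\bR^6$ separated in the right way --- so that the planar block is literally $\widetilde S_T$. Everything after that is a short explicit calculation. One could also bypass the permutation and, as in the planar case, factor the identity $\det(S_T-\lambda I_{12})=p_T(\lambda)^2$ of Corollary \ref{cor221} as $\det(\widetilde S_T-\lambda I_8)\,q_T(\lambda)^2$ using the block form of $A$, and then compare negative roots; the final count for $q_T$ is identical.
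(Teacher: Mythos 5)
Your argument is correct and is essentially the paper's proof: both exploit the decoupling of the $(p_3,q_3)$ coordinates forced by {\bf [H$_1$]} and reduce everything to counting the negative roots of the same quadratic $\lambda^2+\frac{(\beta_3+1)T}{2\pi}\lambda+\frac{\beta_3T^2-4\pi^2}{4\pi^2}$. The only cosmetic difference is the order of operations --- you permute the real $12\times 12$ matrix $S_T$ into $\widetilde S_T\oplus S_T^{(3)}$ before invoking Corollary \ref{cor221}, whereas the paper applies Corollary \ref{cor221} first and then block-diagonalizes the resulting $6\times 6$ complex determinant --- and your closing remark already identifies the latter as an equivalent route.
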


\section{Bifurcation numbers. Proofs of the main results}\label{sec:prf}
In this section we shall complete the proofs of Theorems \ref{thm:main2d}-\ref{thm:main3d}. It will be done by combining  Theorem \ref{th:bifurcation} with the results of Section \ref{sec:hamiltonian} and the explicit computation  of the bifurcation number associated to any trivial closed orbit. The bifurcation number has been defined in (\ref{bb3}) for stationary solutions of general Hamiltonian systems, but we shall now calculate them when the Hamiltonian has the form  \eqref{H}.

\smallskip

Thus, let $q_0\in\Omega$ be a critical point of $V$ and set $u_0:=(-\alpha_N q_0,q_0)$, which is a critical point of $H$. Given $T>0$, the bifurcation index of the corresponding closed orbit will be denoted, for simplicity, $\gamma_N(T,q_0):=\gamma_H(T,u_0)$. These are the bifurcation indexes mentioned at the end of Section \ref{sec:def}. In the following result we compute them in all possible situations, both in the planar and the spatial cases.
\begin{lemma}\label{biff} Let $(T,q_0)\in(0,+\infty)\times(V')^{-1}(0)$ be a trivial closed orbit. In the planar case {\bf [2d]}, the bifurcation number $\gamma_2(T,q_0)$ is given as follows:
	\begin{equation}\label{bn2}
	\gamma_2(T,q_0):=\begin{cases}
		-1&\text{if }(\beta_1,\beta_2)\in\cR_2\cup\cR_4\text{ and }T=T_-(\beta_1,\beta_2),\\
		1&\text{if }(\beta_1,\beta_2)\in\cR_1\cup\cR_3\text{ and }T=T_-(\beta_1,\beta_2),\\	
		\ib(q_0,V')&\text{if }(\beta_1,\beta_2)\in C\backslash(\partial\cR_0)\text{ and }T=T_-(\beta_1,\beta_2),\\	
		1&\text{if }(\beta_1,\beta_2)\in\cR_1\text{ and }T=T_+(\beta_1,\beta_2),\\
		-1&\text{if }(\beta_1,\beta_2)\in\cR_3\text{ and }T=T_+(\beta_1,\beta_2),\\
		0&\text{otherwise}.	
	\end{cases}
\end{equation}
	In the spatial case {\bf [3d]}, the bifurcation number $\gamma_3(T,q_0)$ is given by:
	\begin{equation}\label{bn3}
	\gamma_3(T,q_0):=\begin{cases}\gamma_2(T,\widetilde q_0)-1&\text{if }(\beta_1,\beta_2)\in\cR_2\cup\cR_4\text{ and }T=\frac{2\pi}{\sqrt{\beta_3}},\\
		\gamma_2(T,\widetilde q_0)+1&\text{if }(\beta_1,\beta_2)\in(\bar\cR_0\backslash C)\cup\cR_1\cup\cR_3\text{ and }T=\frac{2\pi}{\sqrt{\beta_3}},\\
		\gamma_2(T,\widetilde q_0)+\ib(\widetilde q_0,\widetilde V')&\text{if }(\beta_1,\beta_2)\in C\text{ and }T=\frac{2\pi}{\sqrt{\beta_3}},\\
		\gamma_2(T,\widetilde q_0)&\text{if }T\not=\frac{2\pi}{\sqrt{\beta_3}}.
	\end{cases}
\end{equation} 
\begin{proof}
	Just combine  the definition of bifurcation numbers \eqref{bb3} with the form of $\ib(u_0,H')$ obtained in Lemma \ref{lem2} (remember also \eqref{bd}), and the computation of the Morse indexes carried out in Propositions \ref{prop0}-\ref{propo2}. 
\end{proof}
\end{lemma}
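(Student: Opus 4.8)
The plan is to read off the bifurcation numbers directly from the definition \eqref{bb3} together with the Morse index computations already at hand, so the proof is essentially a bookkeeping exercise and contains no new ideas. Recall from \eqref{bb3} that
$$\gamma_H(T,u_0)=\ib(u_0,H')\lim_{\epsilon\searrow 0}\frac{\morse(S_{T+\epsilon})-\morse(S_{T-\epsilon})}{2},$$
and that by Lemma \ref{lem2} we have $\ib(u_0,H')=\ib(q_0,V')$, which in the nondegenerate cases is $\sign(\beta_1\beta_2)$ by \eqref{bd}: namely $+1$ on $\cR_1\cup\cR_3$ and $-1$ on $\cR_2\cup\cR_4$. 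Thus, fixing $T>0$, the only thing left to determine is the jump $\tfrac12(\morse(S_{T+\epsilon})-\morse(S_{T-\epsilon}))$ as $\epsilon\searrow 0$, and for this we simply consult Proposition \ref{prop0} in the planar case and Proposition \ref{propo2} in the spatial case.

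For the planar case {\bf [2d]}, I would run through the five regimes of Proposition \ref{prop0}. In cases (i) and (ii) the Morse index is locally constant (equal to $4$) near any admissible $T$, so the jump is $0$ and $\gamma_2(T,q_0)=0$ — this accounts for the ``otherwise'' line of \eqref{bn2}, together with all values of $T$ not equal to $T_-$ or $T_+$ in the remaining cases. When $(\beta_1,\beta_2)\in\cR_2\cup\cR_4\cup(C\setminus\partial\cR_0)$ and $T=T_-(\beta_1,\beta_2)$, case (iii) gives a jump from $4$ to $6$, i.e.\ $+1$; multiplying by $\ib(q_0,V')$, which equals $-1$ on $\cR_2\cup\cR_4$ and equals $\ib(q_0,V')$ (unspecified sign) on $C\setminus\partial\cR_0$, yields the first and third lines of \eqref{bn2}. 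For $(\beta_1,\beta_2)\in\cR_1$ at $T=T_-$ case (iv) gives the jump $4\to 6$, i.e.\ $+1$, and $\ib=+1$, giving the second line; at $T=T_+$ the jump is $6\to 8$, i.e.\ $+1$, again times $\ib=+1$, giving the fourth line. For $(\beta_1,\beta_2)\in\cR_3$ at $T=T_-$ case (v) gives $4\to 6$, i.e.\ $+1$, times $\ib=+1$; at $T=T_+$ it gives $6\to 4$, i.e.\ $-1$, times $\ib=+1$, producing the fifth line. This exhausts \eqref{bn2}.

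For the spatial case {\bf [3d]}, Proposition \ref{propo2} states $m^-(S_T)=m^-(\tilde S_T)+2$ for $0<T<2\pi/\sqrt{\beta_3}$ and $m^-(S_T)=m^-(\tilde S_T)+4$ for $T>2\pi/\sqrt{\beta_3}$. Hence for $T\neq 2\pi/\sqrt{\beta_3}$ the extra additive constant is locally fixed and the jump of $m^-(S_T)$ equals the jump of $m^-(\tilde S_T)$; since moreover $\ib(u_0,H')=\ib(\widetilde q_0,\widetilde V')$ (Lemma \ref{lem2}), we get $\gamma_3(T,q_0)=\gamma_2(T,\widetilde q_0)$, the last line of \eqref{bn3}. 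At $T=2\pi/\sqrt{\beta_3}$ the additive constant itself jumps by $+2$, so $m^-(S_T)$ gains an extra $+2$ compared with the jump of $m^-(\tilde S_T)$; dividing by $2$ this contributes $+1$ to the limit inside \eqref{bb3}, and multiplying by $\ib(q_0,V')$ — which is $+1$ on $(\bar\cR_0\setminus C)\cup\cR_1\cup\cR_3$, is $-1$ on $\cR_2\cup\cR_4$, and is $\ib(\widetilde q_0,\widetilde V')$ on $C$ by \eqref{bd} and Lemma \ref{lem2} — adds respectively $+1$, $-1$, or $\ib(\widetilde q_0,\widetilde V')$ to $\gamma_2(T,\widetilde q_0)$, which is precisely the first three lines of \eqref{bn3}. (One should note that when $T=2\pi/\sqrt{\beta_3}$ coincides with $T_\pm(\beta_1,\beta_2)$ the two contributions simply add; the formula \eqref{bn3} absorbs this automatically since $\gamma_2(T,\widetilde q_0)$ already records the $\tilde S_T$ jump.) This completes the verification.

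There is no genuine obstacle here: every ingredient — the explicit characteristic polynomials, the Morse index tables, the identification of Brouwer indices, and the sign of $\ib$ via \eqref{bd} — has been established. The only point demanding a little care is the spatial case at $T=2\pi/\sqrt{\beta_3}$, where one must correctly attribute the $+2$ jump to the discriminant factor $\lambda^2+\tfrac{(\beta_3+1)T}{2\pi}\lambda+\tfrac{\beta_3T^2-4\pi^2}{4\pi^2}$ rather than to $m^-(\tilde S_T)$, and then remember to halve it before multiplying by the Brouwer index; but this is exactly what Proposition \ref{propo2} packages for us.
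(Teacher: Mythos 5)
Your proposal is correct and follows exactly the route of the paper's (one-line) proof: combine the definition \eqref{bb3} with $\ib(u_0,H')=\ib(q_0,V')=\ib(\widetilde q_0,\widetilde V')$ from Lemma \ref{lem2}, the sign formula \eqref{bd}, and the Morse index tables of Propositions \ref{prop0}--\ref{propo2}. Your case-by-case bookkeeping, including the observation about the extra $+2$ jump at $T=2\pi/\sqrt{\beta_3}$ in the spatial case, is accurate.
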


We are now ready to complete the proof of the results announced in Section \ref{sec:results}. We start with the proof of assertion {\em (i)} in Theorem \ref{thm:main2d}.

\begin{proof}[Proof of Theorem \ref{thm:main2d}] 
	{\em (i)}: The combination of Lemma \ref{lem6}{\em (i)} and the Hartman-Grobman theorem implies that (HS) does not have periodic solutions in a sufficiently small neighborhood of $q_0$. The result follows from  Lemma \ref{lem1}{\em (ii)}.
\end{proof}

It remains to establish Theorem \ref{thm:main3d} and assertions {\em (ii)-(iii)} of Theorem \ref{thm:main2d}. By remembering Lemma \ref{lem2} and in view of the bifurcation numbers computed above, we can combine all these statements into one single result. It is the following:

\begin{theorem}\label{th:gen}{Let $(T,q_0)\in(0,+\infty)\times(V')^{-1}(0)$ be a trivial closed orbit of (\ref{soe}). If $\gamma_N(T,q_0)\not=0$ then there is a branch of closed orbits of (\ref{soe}) emanating from  $(T,q_0)$.} 
\begin{proof}
It was seen in Lemma \ref{lem1}{\em (i)} that all critical points of $H$ are isolated. Theorem \ref{th:bifurcation} applies and provides the existence of a branch $\widehat\cB$ of closed orbits of (HS) bifurcating from $u_0$. Therefore, Lemma \ref{lem1}{\em (iii)} implies the existence of a branch $\cB$ of closed orbits of \eqref{soe} emanating from $q_0$. The proof is complete.
\end{proof}
\end{theorem}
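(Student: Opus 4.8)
The plan is to obtain this statement as an immediate consequence of the Dancer--Rybicki bifurcation theorem (Theorem \ref{th:bifurcation}), applied to the Hamiltonian reformulation \eqref{H} of \eqref{soe} and then transported back to \eqref{soe} by the dictionary established in Section \ref{sec:hamiltonian}. Put differently, Theorem \ref{th:gen} is meant to be the common packaging of assertions {\em (ii)} and {\em (iii)} of Theorem \ref{thm:main2d} together with Theorem \ref{thm:main3d}, once the bifurcation numbers $\gamma_N(T,q_0)$ have been computed (Lemma \ref{biff}).

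Concretely, I would first recall that $u_0:=(-\alpha_N q_0,q_0)$ is a critical point of the Hamiltonian $H$ of \eqref{H}, that it is \emph{isolated} as such by Lemma \ref{lem1}{\em (i)}, and that $\gamma_N(T,q_0)=\gamma_H(T,u_0)$ by the very definition of $\gamma_N$. Thus the standing hypothesis of Theorem \ref{th:bifurcation} is satisfied, and since $\gamma_H(T,u_0)=\gamma_N(T,q_0)\neq 0$ by assumption, that theorem provides a branch $\widehat\cB$ of closed orbits of (HS) emanating from $(T,u_0)$. It then remains to pull this back: by Lemma \ref{lem1}{\em (iii)} the correspondence $\Phi$ of Section \ref{sec:hamiltonian} maps branches of closed orbits of \eqref{soe} bijectively onto branches of closed orbits of (HS), so $\cB:=\Phi^{-1}(\widehat\cB)$ is a branch of closed orbits of \eqref{soe} emanating from $(T,q_0)$, and the proof is complete.

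There is essentially no obstacle left at this point, since all of the substantive work has been done beforehand: rewriting \eqref{soe} in Hamiltonian form and checking that the features of $q_0$ that matter here (isolatedness, absence of nearby closed orbits, Brouwer index) are inherited by $u_0$ (Section \ref{sec:hamiltonian} and Lemmas \ref{lem1}--\ref{lem2}), and the explicit evaluation of $\gamma_N(T,q_0)$ in Lemma \ref{biff} from the Morse-index computations of Propositions \ref{prop0}--\ref{propo2}. If I had to single out a delicate point, it would be that the three alternatives in Definition \ref{branch} (unbounded, reaching $\partial\Omega$, or compact containing two trivial closed orbits) must each be respected by the homeomorphism $\Phi$; but this is precisely what Lemma \ref{lem1}{\em (iii)} records, so in the present proof it can simply be invoked.
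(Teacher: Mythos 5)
Your proposal is correct and follows exactly the paper's own argument: isolatedness of the critical points of $H$ via Lemma \ref{lem1}\emph{(i)}, application of Theorem \ref{th:bifurcation} to the Hamiltonian reformulation using $\gamma_H(T,u_0)=\gamma_N(T,q_0)\neq 0$, and transfer of the resulting branch back to \eqref{soe} via Lemma \ref{lem1}\emph{(iii)}. No differences worth noting.
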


\section{Appendix} \label{sec:appendix}
 In this final section we present  a couple of tricks allowing us to simplify some computations from linear algebra. Throughout what follows $N\in\mathbb N$ is an arbitrary natural number. Our first lemma is a general tool allowing us to reduce the order of some determinants; it is possibly known but we could not find a precise reference.
\begin{lemma}\label{lem0}
	{Let $B_1,B_2,B_3,B_4\in\bR^{N\times N}$ be square matrices such that $B_1B_2=B_2B_1$. Then,
		$$\det\left(\begin{array}{c|c}
		B_1&B_2\\ \hline B_3&B_4
		\end{array}\right)=\det(B_4B_1-B_3B_2)\,.$$
		\begin{proof}It suffices to prove this result when $B_1$ is nonsingular (in the general case it suffices to replace $B_1$ by $B_1+\epsilon I_N$ and take limits when $\epsilon\to 0$). By substracting to the last $N$ columns  the product of the first $N$ columns by $B_1^{-1}B_2$  we get
			\begin{multline*}	\renewcommand*{\arraystretch}{1.2}
		\det\left(\begin{array}{c|c}
		B_1&B_2\\ \hline B_3&B_4
		\end{array}\right)=\det\left(\begin{array}{c|c}
			B_1&0_N\\ \hline B_3&B_4-B_3B_1^{-1}B_2
			\end{array}\right)=\\=\det(B_4-B_3B_1^{-1}B_2)\det B_1=\\=\det(B_4B_1-B_3B_1^{-1}B_2B_1)\,,
			\end{multline*}
			and the result follows from the fact that $B_1$ and $B_2$ commute.
		\end{proof}
		
	}
\end{lemma}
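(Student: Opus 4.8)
The plan is to prove the identity first under the extra hypothesis that $B_1$ is invertible, and then to remove that hypothesis by a perturbation argument. For the second step I would replace $B_1$ by $B_1+\epsilon I_N$: scalar matrices are central, so $B_1+\epsilon I_N$ still commutes with $B_2$ for every $\epsilon\in\bR$; moreover $B_1+\epsilon I_N$ is invertible for all but finitely many $\epsilon$, and both sides of the claimed equality are polynomial (hence continuous) functions of $\epsilon$. Establishing the identity on the cofinite set where $B_1+\epsilon I_N$ is invertible and then letting $\epsilon\to 0$ therefore yields the general case.

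So assume now that $B_1$ is invertible. The core step is a block column operation that clears the upper-right corner: multiplying $\left(\begin{smallmatrix}B_1&B_2\\ B_3&B_4\end{smallmatrix}\right)$ on the right by the block-triangular matrix $\left(\begin{smallmatrix}I_N&-B_1^{-1}B_2\\ 0_N&I_N\end{smallmatrix}\right)$, which has determinant $1$, produces $\left(\begin{smallmatrix}B_1&0_N\\ B_3&B_4-B_3B_1^{-1}B_2\end{smallmatrix}\right)$. Since the determinant of a block-lower-triangular matrix is the product of the determinants of its diagonal blocks, this gives
$$\det\left(\begin{array}{c|c}B_1&B_2\\\hline B_3&B_4\end{array}\right)=\det(B_1)\det\!\big(B_4-B_3B_1^{-1}B_2\big).$$

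To conclude I would absorb the factor $\det(B_1)$ using multiplicativity of the determinant: $\det(B_1)\det(B_4-B_3B_1^{-1}B_2)=\det\!\big((B_4-B_3B_1^{-1}B_2)B_1\big)=\det(B_4B_1-B_3B_1^{-1}B_2B_1)$. Here the commutation hypothesis is used exactly once: from $B_1B_2=B_2B_1$ one gets $B_1^{-1}B_2B_1=B_2$, so the last expression equals $\det(B_4B_1-B_3B_2)$, which is the desired identity. There is no serious obstacle in this argument; the only point needing a moment's care is the reduction to invertible $B_1$, where one must observe that the perturbation $B_1+\epsilon I_N$ does not destroy the commutativity assumption.
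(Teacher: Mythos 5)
Your proof is correct and follows essentially the same route as the paper's: reduce to invertible $B_1$ via the perturbation $B_1+\epsilon I_N$, clear the upper-right block by the column operation with $-B_1^{-1}B_2$, factor the block-triangular determinant, and use $B_1^{-1}B_2B_1=B_2$ to finish. Your added remarks (that $\epsilon I_N$ preserves commutativity and that both sides are polynomial in $\epsilon$) merely make explicit what the paper leaves implicit.
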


An important role in this paper is played by the matrices which we called $S_T$ in Section \ref{sec:branches}. 
In the following lemma we provide a trick to simplify the computation of their determinants.
\begin{lemma}\label{xlem12}
Let $A\in\mathbb R^{2N\times 2N}$ be symmetric and let $S_T\in\bR^{4N\times 4N}$ be defined as in \eqref{tkla}. Then, for every $T>0$ one has 
	$$\det S_T=\det\left(\frac{T}{2\pi}J_NA-iI_{2N}\right)^2=\det\left(-\frac{T}{2\pi}A+iJ_N\right)^2.$$
  \end{lemma}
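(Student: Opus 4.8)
The plan is to reduce the $4N\times 4N$ determinant $\det S_T$ to a product of two $2N\times 2N$ determinants via Lemma \ref{lem0}, and then to tidy up the outcome with elementary manipulations. Write $C:=-\frac{T}{2\pi}A$, which is symmetric; thus $S_T=\begin{pmatrix}C&-J_N\\ J_N&C\end{pmatrix}$. Lemma \ref{lem0} does not apply directly because $C$ and $J_N$ need not commute, so I would first conjugate. Recalling $\det J_N=1$ and $J_N^{-1}=-J_N$, conjugation of $S_T$ by the invertible matrix $\begin{pmatrix}J_N&0\\0&I_{2N}\end{pmatrix}$ does not change the determinant and, using $J_N^2=-I_{2N}$, transforms $S_T$ into $\begin{pmatrix}J_N^{-1}CJ_N&-I_{2N}\\ -I_{2N}&C\end{pmatrix}$. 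Now the off-diagonal blocks are scalar matrices, so the commutativity hypothesis of Lemma \ref{lem0} is trivially satisfied, and it gives
\[
\det S_T=\det\!\big(C\,J_N^{-1}CJ_N-I_{2N}\big)=\det\!\big(-(CJ_N)^2-I_{2N}\big),
\]
the second equality again using $J_N^{-1}=-J_N$.

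The next step is to factor over $\mathbb C$. Since $-(CJ_N)^2-I_{2N}=-(CJ_N-iI_{2N})(CJ_N+iI_{2N})$ and the scalar factor $-1$ contributes $(-1)^{2N}=1$, we get $\det S_T=\det(CJ_N-iI_{2N})\det(CJ_N+iI_{2N})$. It remains to recognize the two factors. Substituting $CJ_N=-\frac{T}{2\pi}AJ_N$ and using the standard fact that $AJ_N$ and $J_NA$ have the same characteristic polynomial (indeed they are similar, $J_N$ being invertible), each factor can be rewritten in terms of $\frac{T}{2\pi}J_NA$; absorbing signs by $(-1)^{2N}=1$ yields
\[
\det S_T=\det\!\Big(\tfrac{T}{2\pi}J_NA-iI_{2N}\Big)\det\!\Big(\tfrac{T}{2\pi}J_NA+iI_{2N}\Big).
\]
Finally these two determinants coincide: transposing and using $A^T=A$, $J_N^T=-J_N$ gives $\det\!\big(\tfrac{T}{2\pi}J_NA+iI_{2N}\big)=\det\!\big(-\tfrac{T}{2\pi}AJ_N+iI_{2N}\big)$, and one more appeal to the cospectrality of $AJ_N$ and $J_NA$ (with $(-1)^{2N}=1$) turns this into $\det\!\big(\tfrac{T}{2\pi}J_NA-iI_{2N}\big)$. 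Hence $\det S_T=\det\!\big(\tfrac{T}{2\pi}J_NA-iI_{2N}\big)^2$. For the remaining expression I would use the identity $-\frac{T}{2\pi}A+iJ_N=J_N\big(\frac{T}{2\pi}J_NA+iI_{2N}\big)$, valid since $J_N\cdot\frac{T}{2\pi}J_NA=\frac{T}{2\pi}J_N^2A=-\frac{T}{2\pi}A$ and $J_N\cdot iI_{2N}=iJ_N$; taking determinants and using $\det J_N=1$ gives $\det\!\big(-\tfrac{T}{2\pi}A+iJ_N\big)=\det\!\big(\tfrac{T}{2\pi}J_NA+iI_{2N}\big)=\det\!\big(\tfrac{T}{2\pi}J_NA-iI_{2N}\big)$, and squaring completes the proof.

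I expect the only genuine difficulty to be bookkeeping of signs — the repeated $(-1)^{2N}=1$, the coexistence of $J_N^{-1}=-J_N$ and $J_N^{T}=-J_N$, and the minus sign hidden in $C=-\frac{T}{2\pi}A$; conceptually the argument is just ``conjugate, then apply Lemma \ref{lem0}''. If one prefers to avoid Lemma \ref{lem0}, an equally short alternative is to diagonalize the $2\times 2$ coupling pattern $\begin{pmatrix}0&-1\\ 1&0\end{pmatrix}$ directly: conjugating $S_T$ by $\begin{pmatrix}I_{2N}&I_{2N}\\ -iI_{2N}&iI_{2N}\end{pmatrix}$ produces the block-diagonal matrix $\mathrm{diag}(C+iJ_N,\,C-iJ_N)$, whence $\det S_T=\det(C+iJ_N)\det(C-iJ_N)$, and the same sign bookkeeping finishes the argument.
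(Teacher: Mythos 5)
Your proof is correct and follows essentially the same route as the paper's: both reduce $\det S_T$ to the product $\det(C+iJ_N)\det(C-iJ_N)$ with $C=-\frac{T}{2\pi}A$ (the paper via exactly the block-diagonalizing manipulation you sketch as your ``alternative''), and then show the two conjugate factors are equal --- the paper by noting that $A+iJ_N$ is Hermitian so its determinant is real, you by a transpose-plus-similarity argument, which amounts to the same thing. No gaps.
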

	\begin{proof} We first show the result for $T=2\pi$. In this case, $S_{2\pi}=\left(\begin{array}{c|c}
			-A& - J_N\\ \hline J_N&-A
		\end{array}\right)$. By adding to the first $N$ rows the product of $i$ times the last $N$ rows and then substracting to the last $N$ columns the product of $i$ times the first $N$ columns one gets
		\begin{multline*}
		\det S_{2\pi}=\det\left(\begin{array}{c|c} -A+i J_N&- J_N-iA\\ \hline J_N&-A\end{array}\right)=\det\left(\begin{array}{c|c} -A+i J_N& 0_N\\ \hline J_N&-A-i J_N\end{array}\right)=\\=\det(-A+i J_N)\det(-A-i J_N)=|\det(-A-i J_N)|^2=\det(-A+i J_N)^2\,,
		\end{multline*}
		the last equality coming from the fact that $A+i J_N$ is Hermitian so that its determinant is a real number (we have also used the identity $\det\bar M=\overline{\det M}$). Since $ J_N^2=-I_{2N}$ and $\det J_N=1$ it follows that
		$\det S_{2\pi}=\det(J_NA-iI_{2N})^2$, thus proving the result for $T=2\pi$.

\smallskip	
 In the general case we apply the above to the symmetric matrix $\frac{T}{2\pi} A$. The result follows. 
\end{proof}

We immediately obtain the following well-known result:
\begin{corollary}\label{corolnew}
	{For $T>0$, let $S_T$ be defined as in (\ref{tkla}). Then $\det S_T=0$ if and only if $\frac{2\pi}{T}i$ is an eigenvalue of $J_N A$.}	
\end{corollary}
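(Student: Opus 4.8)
The plan is to read this off immediately from Lemma \ref{xlem12}, which does all the work. Since $A=H''(u_0)$ is symmetric, that lemma applies and gives
$$\det S_T=\det\left(\tfrac{T}{2\pi}J_NA-iI_{2N}\right)^2.$$
Consequently $\det S_T=0$ if and only if $\det\left(\tfrac{T}{2\pi}J_NA-iI_{2N}\right)=0$, i.e. if and only if $i\in\sigma\!\left(\tfrac{T}{2\pi}J_NA\right)$.

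The only remaining point is the elementary rescaling observation: $\tfrac{T}{2\pi}J_NA$ has $i$ as an eigenvalue precisely when $J_NA$ has $\tfrac{2\pi}{T}i$ as an eigenvalue. Indeed, $J_NAv=\mu v$ is equivalent to $\tfrac{T}{2\pi}J_NAv=\tfrac{T}{2\pi}\mu v$, so the spectrum of the rescaled matrix is exactly $\tfrac{T}{2\pi}$ times the spectrum of $J_NA$, and $\tfrac{T}{2\pi}\mu=i$ amounts to $\mu=\tfrac{2\pi}{T}i$. Chaining the two steps yields the stated equivalence. (One could equally work with the alternative expression $\det S_T=\det\left(-\tfrac{T}{2\pi}A+iJ_N\right)^2$ and use $J_N^2=-I_{2N}$, $\det J_N=1$ to convert, but the first form is the most direct.)

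There is essentially no obstacle here: the genuine content, namely the factorization of $\det S_T$ as a perfect square, has already been established in Lemma \ref{xlem12} via block reduction of the determinant together with the fact that $A+iJ_N$ is Hermitian. If a more conceptual argument were desired, one could instead invoke the identification discussed in Section \ref{sec:branches}, under which $\ker S_T$ corresponds to the sinusoidal solutions of pure frequency $2\pi/T$ of the linearization of (HS) at $u_0$; such nonzero solutions exist exactly when $\tfrac{2\pi}{T}i\in\sigma(J_NA)$. The algebraic route through Lemma \ref{xlem12} is, however, shorter and self-contained, so that is the one I would present.
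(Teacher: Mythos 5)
Your proof is correct and follows exactly the paper's route: the corollary is stated there as an immediate consequence of Lemma \ref{xlem12}, with the same factorization $\det S_T=\det\bigl(\tfrac{T}{2\pi}J_NA-iI_{2N}\bigr)^2$ and the same rescaling of the spectrum. Nothing is missing.
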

Applying Lemma \ref{xlem12} to the matrix $A':=A+\frac{2\pi}{T} \lambda I_{2N}$ one obtains an equality simplifying the computation of the characteristic polynomial of $S_T$. Precisely:
\begin{corollary}\label{cor221}
{Under the assumptions of Lemma \ref{xlem12}, the characteristic polynomial of $S_T$ is given by
	$$\det(S_T-\lambda I_{4N})=p_T(\lambda)^2\,,$$
where $p_T(\lambda):=\det\big(-\frac{T}{2\pi}A+i J_N-\lambda I_{2N}\big)$.}
\end{corollary}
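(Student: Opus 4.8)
The key observation is that the matrix $S_T-\lambda I_{4N}$ is itself of the form \eqref{tkla}. Indeed, writing $A':=A+\frac{2\pi}{T}\lambda I_{2N}$, the diagonal blocks of $S_T-\lambda I_{4N}$ become $-\frac{T}{2\pi}A-\lambda I_{2N}=-\frac{T}{2\pi}A'$, while the off-diagonal blocks $\pm J_N$ are untouched by the subtraction of $\lambda I_{4N}$. Hence $S_T-\lambda I_{4N}$ is exactly the matrix obtained by substituting $A'$ for $A$ in the definition of $S_T$. The plan is therefore to apply Lemma \ref{xlem12} to $A'$, which yields $\det(S_T-\lambda I_{4N})=\det\big(-\frac{T}{2\pi}A'+iJ_N\big)^2$, and then to simplify the inner determinant: since $-\frac{T}{2\pi}A'+iJ_N=\big(-\frac{T}{2\pi}A+iJ_N\big)-\lambda I_{2N}$, that inner determinant is precisely $p_T(\lambda)$.

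The one point that needs care is that Lemma \ref{xlem12} is stated for a \emph{real symmetric} matrix, whereas $A'$ is real symmetric only when $\lambda\in\R$ (for complex $\lambda$ the term $\frac{2\pi}{T}\lambda I_{2N}$ is no longer real). So I would first run the argument above for real values of the spectral parameter: for every $\lambda\in\R$ the matrix $A'$ is real and symmetric, Lemma \ref{xlem12} applies verbatim, and the simplification gives $\det(S_T-\lambda I_{4N})=p_T(\lambda)^2$.

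To pass from real to arbitrary complex $\lambda$ I would invoke a polynomial-identity argument. Both sides of the claimed equality are polynomials in $\lambda$: the left-hand side is the characteristic polynomial of $S_T$ (degree $4N$), and the right-hand side is the square of $p_T$, a polynomial of degree $2N$. Two polynomials that coincide on the infinite set $\R$ must be identical, so the equality $\det(S_T-\lambda I_{4N})=p_T(\lambda)^2$ holds for all $\lambda\in\C$, as required.

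I expect this last extension step to be the only genuinely delicate point, since the naive temptation is to plug a complex $\lambda$ directly into Lemma \ref{xlem12}, where the Hermitian symmetry underpinning its proof would fail. Should one prefer to avoid the continuation argument, an alternative is to re-run the block row/column reduction of Lemma \ref{xlem12} directly on $S_T-\lambda I_{4N}$ (this triangularization uses only $J_N^2=-I_{2N}$, never the symmetry of the diagonal block), obtaining $\det(S_T-\lambda I_{4N})=\det\big(B+iJ_N\big)\det\big(B-iJ_N\big)$ with $B:=-\frac{T}{2\pi}A-\lambda I_{2N}$; one then notes that $B$ is (complex) symmetric and $J_N$ antisymmetric, so $(B+iJ_N)^{T}=B-iJ_N$ and the two factors share the same determinant, again giving $p_T(\lambda)^2$.
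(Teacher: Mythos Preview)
Your proof is correct and follows exactly the route indicated in the paper: apply Lemma~\ref{xlem12} to $A':=A+\frac{2\pi}{T}\lambda I_{2N}$ and simplify. In fact you are more careful than the paper's one-line justification, since you explicitly address the point that $A'$ is real symmetric only for $\lambda\in\R$ and then pass to all $\lambda$ by polynomial identity (or, alternatively, via the transpose observation $(B+iJ_N)^T=B-iJ_N$); the paper leaves this step implicit.
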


\smallskip

\noindent
{\bf\em Acknowledgement.} We thank R. Ortega for pointing out references \cite{LerSch,Yor}.

\end{document}